\def\rr{{\mathbb R}}
\def\rn{{{\rr}^n}}
\def\zz{{\mathbb Z}}
\def\cc{{\mathbb C}}
\def\nn{{\mathbb N}}
\def\ca{{\mathcal A}}
\def\cs{{\mathcal S}}
\def\fz{\infty}
\def\az{\alpha}
\def\bz{\beta}
\def\dz{\delta}
\def\bdz{\Delta}
\def\gz{{\gamma}}
\def\bgz{{\Gamma}}
\def\lz{\lambda}
\def\blz{\Lambda}
\def\tz{\theta}
\def\vz{\varphi}
\def\lf{\left}
\def\r{\right}
\def\la{\langle}
\def\ra{\rangle}
\def\hs{\hspace{0.25cm}}
\def\ls{\lesssim}
\def\pa{\partial}
\def\noz{\nonumber}
\def\wz{\widetilde}
\def\wh{\widehat}
\def\st{\subset}
\def\com{\complement}
\def\bh{\backslash}
\def\dist{\mathop\mathrm{\,dist\,}}
\def\supp{\mathop\mathrm{\,supp\,}}
\def\loc{\mathop\mathrm{\,loc\,}}
\def\div{\mathop\mathrm{div}}
\def\essinf{\mathop\mathrm{\,ess\,inf\,}}
\def\dint{\displaystyle\int}
\def\dsup{\displaystyle\sup}
\def\rnn{\rr_+^{n+1}}
\def\dydt{\,\frac{dy\,dt}{t^{n+1}}}
\def\asize{(w,\,p,\,\infty)}
\def\pt{\partial}
\def\rize{H_{L_w,\,{\rm Riesz}}^p(\rn)}
\def\vez{\varepsilon}
\def\nab{\nabla}
\def\ujb{{U_j(B)}}
\newtheorem{thm}{Theorem}[section]
\newtheorem{prop}[thm]{Proposition}
\newtheorem{lem}[thm]{Lemma}
\newtheorem{cor}[thm]{Corollary}
\theoremstyle{definition}
\newtheorem{defn}[thm]{Definition}
\newtheorem{rem}[thm]{Remark}
\renewcommand{\vec}[1]{\boldsymbol{#1}}
\numberwithin{equation}{section}
\begin{document}

\arraycolsep=1pt

\title{\bf\Large Riesz Transform Characterizations of Hardy Spaces
Associated to Degenerate Elliptic Operators
\footnotetext{\hspace{-0.35cm} 2010 {\it
Mathematics Subject Classification}. Primary 47B06; Secondary 42B30, 42B35, 35J70.
\endgraf {\it Key words and phrases}. degenerate elliptic operator,
Hardy space, Hardy-Sobolev space, Riesz transform.
\endgraf Dachun Yang is supported by the National
Natural Science Foundation of China (Grant Nos. 11171027 and 11361020).  This project is also partially supported
by the Specialized Research Fund for the Doctoral Program of Higher Education
of China (Grant No. 20120003110003)  and the Fundamental Research Funds for Central
Universities of China (Grant Nos. 2013YB60 and 2014KJJCA10).}}
\author{Dachun Yang and Junqiang Zhang\,\footnote{Corresponding author}}
\date{ }
\maketitle

\vspace{-0.8cm}

%%%%-------------------------------------------------------------------
\begin{center}
\begin{minipage}{13.8cm}
{\small {\bf Abstract}\quad
Let $w$ be a Muckenhoupt $A_2(\mathbb{R}^n)$ weight
and $L_w:=-w^{-1}\mathop\mathrm{div}(A\nabla)$
the degenerate elliptic operator on the Euclidean space $\mathbb{R}^n$.
In this article, the authors establish the Riesz transform characterization
of the Hardy space $H_{L_w}^p(\mathbb{R}^n)$ associated with $L_w$,
for $w\in A_{q}(\rn)$ and $w^{-1}\in A_{2-\frac{2}{n}}(\rn)$ with $n\geq 3$,
$q\in[1,2]$
and $p\in(q(\frac{1}{r}+\frac{q-1}{2}+\frac{1}{n})^{-1},1]$
if, for some $r\in[1,\,2)$, $\{tL_w e^{-tL_w}\}_{t\geq 0}$ satisfies the weighted $L^r-L^2$
full off-diagonal estimate.}
\end{minipage}
\end{center}
%%%%-------------------------------------------------------------------

\section{Introduction}\label{s1}

\hskip\parindent
The theory of classical real Hardy spaces $H^p(\rn)$ originates
from Stein and Weiss \cite{SW60} in the early 1960s.
Since then, this real-variable theory received continuous development and
now is increasingly mature; see, for example, \cite{FS72,S93}.
It is well known that the Hardy space $H^p(\rn)$ is a
suitable substitute of the Lebesgue space $L^p(\rn)$, when
$p\in(0,\,1]$, and plays important roles in various fields of analysis and
partial differential equations.
Notice that $H^p(\rn)$ is essentially associated with the Laplace
operator $\bdz:=\sum_{j=1}^n\frac{\partial^2}{\partial x_j^2}$; see \cite{DY051,HMM11} for instance.

The motivation to study the Hardy spaces associated with different
operators (for example, the second order elliptic operator $-\div (A\nabla)$
and the Schr\"odinger operator
$-\Delta +V$) comes from characterizing the boundedness of
the associated Riesz transforms and the regularity of solutions
of the associated equations; see, for example, \cite{ADM05,DY05,DY051,AMR08,Yan08,JY10,HM09,
HMM11,dl13,dhmmy13,ccyy13}.

Consider now a degenerate elliptic operator.
Let $w$ be a Muckenhoupt $A_2(\mathbb{R}^n)$ weight and
$A(x):=(A_{ij}(x))_{i,j=1}^n$ be a matrix of complex-valued, measurable
functions on $\rn$ satisfying the \emph{degenerate elliptic condition}
that there exist positive constants $\lz\le\blz$ such that,
for almost every $x\in\rn$ and all $\xi$, $\eta\in\cc^n$,
\begin{eqnarray}\label{degenerate C1}
\lf|\langle A(x) \xi,\,\eta \rangle\r| \le \blz w(x)|\xi||\eta|
\end{eqnarray}
and
\begin{eqnarray}\label{degenerate C2}
\Re \langle A(x) \xi,\,\xi \rangle \geq \lz w(x)|\xi|^2,
\end{eqnarray}
here and hereafter, $\Re z$ denotes the \emph{real part} of $z$ for any $z\in\cc$.
The associated \emph{degenerate elliptic operator} $L_w$ is defined by setting,
for all $f\in D(L_w)\st\mathcal{H}_0^1(w,\,\rn)$,
\begin{eqnarray}\label{Lw}
L_wf:=-\frac{1}{w}\div (A\nabla f),
\end{eqnarray}
which is interpreted in the usual weak sense via the sesquilinear form,
where $D(L_w)$ denotes the domain of $L_w$.
Here and in what follows, $\mathcal{H}_0^1(w,\,\rn)$ denotes the \emph{weighted Sobolev space},
which is defined to be the closure of $C_c^\fz(\rn)$ with respect to the \emph{norm}
$$\|f\|_{\mathcal{H}_0^1(w,\,\rn)}:=\lf\{\int_\rn \lf[|f(x)|^2+|\nabla f(x)|^2\r]w(x)\,dx\r\}^{1/2}.$$
The \emph{sesquilinear form} $\mathfrak{a}$ associated with $L_w$ is defined by setting,
for all $f,\,g\in\mathcal{H}_0^1(w,\,\rn)$,
\begin{eqnarray*}%\label{sesqui form}
\mathfrak{a}(f,\,g):=\dint_{\rn}[A(x)\nabla f(x)]\cdot \overline{\nabla g(x)}\,dx.
\end{eqnarray*}

In the case $w\equiv 1$, the degenerate elliptic operator $L_w$ reduces to
the usual second order elliptic operator $L=-\div (A\nabla)$.
Therefore, $L_w$ may be considered as a generalization of the usual uniformly
elliptic operator.

Operators of the form \eqref{Lw} and the associated elliptic equations
were first considered by Fabes, Kenig and Serapioni \cite{FKS82}
and have been considered by a number of other authors (see, for example,
\cite{CF84,CS85,CF87,Ka99} and, especially, some recent articles by Cruz-Uribe and Rios \cite{CR08,CR12,CR13,CR14}).
We point out that, when $w$ is a weight in the Muchkenhoupt class $A_2(\rn)$, the space
$\mathcal{H}_0^1(w,\,\rn)$ was first studied by Fabes et al. in \cite{FKS82}, where the local weighted
Sobolev embedding theorem and the Poincar\'e inequality
were proved to hold true.

Let $L_w$ be a degenerate elliptic operator
as in \eqref{Lw} with $w$ in the Muckenhoupt class
of $A_2(\rn)$ weights (see Subsection \ref{s2.0}
below for their exact definitions).
The main purpose of this article is to establish the Riesz
transform characterizations of Hardy spaces $H_{L_w}^p(\rn)$ associated with $L_w$
(see Theorem \ref{thm main} below).

This article may be viewed in part as a sequel to \cite{ZCJY14}, where
the non-tangential maximal function characterizations of Hardy spaces $H_{L_w}^p(\rn)$
associated with $L_w$ and the boundedness of the associated Riesz transform on these spaces
have been studied.

To state the main results of this article,
we first introduce some definitions and notation.
Let $w\in A_2(\rn)$, $L_w$ be as in \eqref{Lw} and
$f\in L^2(w,\,\rn)$, where $L^2(w,\,\rn)$ denotes the \emph{weighted Lebesgue space}
with the \emph{norm}
\begin{eqnarray*}
\|f\|_{L^2(w,\,\rn)}:=\lf\{\dint_\rn\lf|f(x)\r|^2w(x)\,dx\r\}^{\frac{1}{2}}.
\end{eqnarray*}
It is well known that, if $w\in A_2(\rn)$, $L^2(w,\,\rn)$ is a space of homogenous type
in the sense of Coifman and Weiss \cite{CW71,CW77}, since $w(x)\,dx$ is a doubling measure.
In what follows, let $\rr_+^{n+1}:=\rr^n\times(0,\fz)$.
For any $f\in L^2(w,\,\rn)$ and $x\in\rn$, the \emph{square function $\cs_{L_w}(f)$
associated with} $L_w$ is defined by setting
\begin{eqnarray*}%\label{square function}
\cs_{L_w}(f)(x):=\lf[\iint_{\bgz(x)}\lf|t^2L_we^{-t^2L_w}(f)(y)\r|^2w(y)\,
\frac{dy}{w(B(x,t))}\,\frac{dt}{t}\r]^{1/2},
\end{eqnarray*}
where $B(x,t):=\{y\in\rn:\ |x-y|<t\}$,
$w(B(x,t)):=\int_{B(x,t)}w(y)\,dy$
and
\begin{eqnarray}\label{cone}
\bgz_\az(x):=\{(y,t)\in\rr^{n+1}_+:\ |x-y|<\az t\}
\end{eqnarray}
denotes the \emph{cone of aperture} $\az$ \emph{with
vertex} $x$. In particular, if $\az=1$, we write $\Gamma(x)$
instead of $\Gamma_\az(x)$.

The Hardy spaces $H_{L_w}^p(\rn)$ associated with $L_w$ were defined in
\cite[Definition 1.1]{ZCJY14} as follows.
\begin{defn}[\cite{ZCJY14}]\label{def Hardy space}
Let $p\in(0,\,1]$, $w\in A_2(\rn)$
and $L_w$ be the degenerate elliptic operator
as in \eqref{Lw} with the matrix $A$ satisfying the degenerate elliptic
conditions \eqref{degenerate C1} and \eqref{degenerate C2}.
The \emph{Hardy space} $H_{L_w}^p(\rn)$, \emph{associated with}
$L_w$, is defined as the completion of the space
\begin{eqnarray*}
\lf\{f\in L^2(w,\,\rn):\ \|S_{L_w}(f)\|_{L^p(w,\,\rn)}<\fz\r\}
\end{eqnarray*}
with respect to the \emph{quasi-norm}
\begin{eqnarray*}
\lf\|f\r\|_{H_{L_w}^p(\rn)}:=\|S_{L_w}(f)\|_{L^p(w,\,\rn)}.
\end{eqnarray*}
\end{defn}

We introduce the following Hardy spaces associated with the Riesz transform,
which, when $w\equiv1$, is just the one defined in \cite[p.\,7]{HMM11}.
\begin{defn}\label{def Riesz Hardy}
Let $p\in(0,\,1]$, $w\in A_2(\rn)$
and $L_w$ be the degenerate elliptic operator
as in \eqref{Lw} with the matrix $A$ satisfying the degenerate elliptic
conditions \eqref{degenerate C1} and \eqref{degenerate C2}.
The \emph{Hardy space} $H_{L_w,\,{\rm Riesz}}^p(\rn)$ is defined as the completion of the space
\begin{eqnarray*}
\lf\{f\in L^2(w,\,\rn):\ \nabla L_w^{-1/2}f\in H_w^p(\rn)\r\}
\end{eqnarray*}
with respect to the \emph{quasi-norm}
\begin{eqnarray*}
\lf\|f\r\|_{H_{L_w,\,{\rm Riesz}}^p(\rn)}:=\|\nabla L_w^{-1/2}f\|_{H^p_w(\rn)}.
\end{eqnarray*}
\end{defn}

Before establishing the Riesz transform characterization of $H_{L_w}^p(\rn)$,
we first introduce the following definition of weighted full off-diagonal estimates,
which is a generalization of full off-diagonal estimates in spaces of homogeneous
type (see \cite[Definition 3.1]{am07ii}).
\begin{defn}\label{def off-diagonal}
Let $w\in A_\fz(\rn)$ and $1\le p\le q<\fz$. A family $\{T_t\}_{t\geq 0}$
of sublinear operators is said to satisfy the \emph{weighted $L^p-L^q$ full off-diagonal estimates},
denoted by $T_t\in \mathcal{F}_w(L^p-L^q)$,
if there exist positive constants $C,\,c\in (0,\fz)$ such that, for any closed sets
$E,\,F$ of $\rn$ and $f\in L^p(w^{\frac{p}{2}},\,E)$ with $\supp f\st E$,
\begin{eqnarray*}
\lf\{\int_F|T_t(f)(x)|^q [w(x)]^{\frac q2}\,dx\r\}^{\frac 1q}
\le C t^{-\frac{n}{2}\lf(\frac 1p-\frac 1q\r)}e^{-c\frac{[d(E,\,F)]^2}{t}}
\lf\{\int_E |f(x)|^p[w(x)]^{\frac p2}\,dx\r\}^{\frac 1p}.
\end{eqnarray*}
\end{defn}

The following theorem establishes the Riesz transform characterizations of $H_{L_w}^p(\rn)$.
\begin{thm}\label{thm main}
Let $q\in[1,2]$ and $w\in A_{q}(\rn)$ satisfy $w^{-1}\in A_{2-\frac{2}{n}}(\rn)$ with $n\geq 3$.
Assume $tL_w e^{-tL_w}\in\mathcal{F}_w(L^r-L^2)$ for some $r\in[1,\,2)$.
Then, for $p\in(q(\frac{1}{r}+\frac{q-1}{2}+\frac{1}{n})^{-1},1]$,
the Hardy spaces $H_{L_w,\,{\rm Riesz}}^p(\rn)$ and $H_{L_w}^p(\rn)$ coincide with
equivalent quasi-norms.
\end{thm}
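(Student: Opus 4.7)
The plan is to establish the two continuous inclusions $H_{L_w}^p(\rn)\hookrightarrow H_{L_w,\,{\rm Riesz}}^p(\rn)$ and $H_{L_w,\,{\rm Riesz}}^p(\rn)\hookrightarrow H_{L_w}^p(\rn)$ separately, and then conclude by a standard density argument on the dense subspaces appearing in Definitions~\ref{def Hardy space} and~\ref{def Riesz Hardy}. The first inclusion is precisely the statement that the Riesz transform $\nabla L_w^{-1/2}$ is bounded from $H_{L_w}^p(\rn)$ into $H_w^p(\rn)$, and this has essentially been obtained in the companion article~\cite{ZCJY14} under hypotheses compatible with the present ones; I would quote it, adjusting the admissible range of $p$ as necessary. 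The substantive new content therefore lies entirely in the reverse inclusion.

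To prove the reverse inclusion, I would combine the atomic decomposition of the classical weighted Hardy space $H_w^p(\rn)$ with the molecular characterization of $H_{L_w}^p(\rn)$ by $(p,2,M,\ez)_{L_w}$-molecules (borrowed from \cite{ZCJY14}). The bridge between the two is the identity
\begin{eqnarray*}
f=-L_w^{-1/2}\lf(\tfrac{1}{w}\div (A\nabla L_w^{-1/2}f)\r),
\end{eqnarray*}
valid on a dense subspace and following directly from \eqref{Lw} together with the functional calculus of $L_w$. Given $f$ in this dense subspace, I would expand $\vec{g}:=\nabla L_w^{-1/2}f\in H_w^p(\rn)$ atomically as $\vec{g}=\sum_j\lambda_j\vec{a}_j$, where each $\vec{a}_j$ is a vector-valued $H_w^p$-atom supported in a ball $B_j$, and then set
\begin{eqnarray*}
m_j:=-L_w^{-1/2}\lf(\tfrac{1}{w}\div (A\vec{a}_j)\r),
\end{eqnarray*}
so that formally $f=\sum_j\lambda_j m_j$. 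It then suffices to show that, uniformly in $j$, the function $m_j$ is a fixed multiple of a $(p,2,M,\ez)_{L_w}$-molecule adapted to $B_j$ for suitable choices of $M\in\nn$ and $\ez>0$; the $\ell^p$-summability of $\{|\lambda_j|^p\}$ inherited from $\vec{g}\in H_w^p(\rn)$ would then close the quasi-norm estimate.

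The main obstacle I expect is the verification of the molecular weighted $L^2$-decay on the dyadic annuli $U_\ell(B_j)$ for the functions $(r_{B_j}^2L_w)^{-k}m_j$, $k\in\{0,1,\ldots,M\}$. The route I would take is to represent $L_w^{-1/2}$ through the subordination formula $L_w^{-1/2}=\pi^{-1/2}\int_0^\fz e^{-tL_w}\,t^{-1/2}\,dt$, split this integral at $t\sim r_{B_j}^2$, rewrite $\tfrac{1}{w}\div (A\vec{a}_j)$ by integration by parts against the sesquilinear form $\mathfrak{a}$ so that the integrand takes the form $tL_we^{-tL_w}$ applied to objects controlled by $\vec{a}_j$, and then invoke the hypothesis $tL_we^{-tL_w}\in\cf_w(L^r-L^2)$ to convert spatial separation between $B_j$ and $U_\ell(B_j)$ into Gaussian decay in $t$. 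A weighted Sobolev--Poincar\'e inequality furnished by the assumptions $w\in A_q(\rn)$ and $w^{-1}\in A_{2-2/n}(\rn)$ is the bridge that trades the $H_w^p$-atomic normalization of $\vec{a}_j$ for the $L^r(w^{r/2})$-bound required to feed into the off-diagonal estimate; the interplay between the off-diagonal exponent $r$ (contributing $\tfrac{1}{r}$), the weighted doubling encoded in $q$ (contributing $\tfrac{q-1}{2}$), and the Sobolev gain (contributing $\tfrac{1}{n}$) is exactly what pins down the threshold $p>q(\tfrac{1}{r}+\tfrac{q-1}{2}+\tfrac{1}{n})^{-1}$ as the smallest value compatible with $\ell$-summability of the annular pieces in $\ell^p$.
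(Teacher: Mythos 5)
Your first inclusion matches the paper (it is quoted from \cite[Theorem 1.6]{ZCJY14}), but the reverse inclusion is where your route diverges, and it has a structural gap. The paper does not decompose $\vec{g}=\nabla L_w^{-1/2}f$ into \emph{vector-valued $H_w^p$-atoms} and pass to molecules; instead it proves and applies Theorem~\ref{pro second}, the atomic decomposition of the \emph{weighted Hardy--Sobolev space} $H^{1,p}_w(\rn)$, which produces \emph{scalar} functions $\beta_k$ supported in balls $B_k$ with the two crucial normalizations $\|\beta_k\|_{L^2(w,B_k)}\le r_{B_k}\|\nabla\beta_k\|_{L^2(w,B_k)}$ and $\|\nabla\beta_k\|_{L^2(w,B_k)}\le[w(B_k)]^{1/2-1/p}$, so that $\nabla f=\sum_k\lambda_k\nabla\beta_k$. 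Setting $f:=L_w^{-1/2}h$, the paper then estimates $\|S_1(\sqrt{L_w}\,\beta_k)\|_{L^p(w,\rn)}$ directly (no molecular characterization of $H_{L_w}^p$ is invoked), using the off-diagonal hypothesis, Proposition~\ref{pro off-diagonal}, and the weighted Sobolev inequality~\eqref{eq Sobolev} applied to $\nabla\beta_k$.

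The specific gap in your plan is the sentence ``a weighted Sobolev--Poincar\'e inequality\ldots\ trades the $H_w^p$-atomic normalization of $\vec{a}_j$ for the $L^r(w^{r/2})$-bound.'' A Sobolev--Poincar\'e inequality converts a \emph{gradient} bound into an improved-integrability bound on the function, but an $H_w^p$-atom $\vec{a}_j$ carries only a size bound $\|\vec{a}_j\|_{L^2(w,B_j)}\le[w(B_j)]^{1/2-1/p}$ and a cancellation condition; there is no gradient bound on $\vec{a}_j$ to feed into Sobolev. This is precisely the role of the Hardy--Sobolev atoms: the Poincar\'e-type condition (ii) and the gradient normalization (iii) are what make the Sobolev inequality available and are what produce the $\tfrac1n$ in the threshold $p>q(\tfrac1r+\tfrac{q-1}{2}+\tfrac1n)^{-1}$ (see~\eqref{eq 3.6x} and the exponent count in~\eqref{eq 3.x2}). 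Beyond this, your route also needs ingredients the paper's does not: a molecular characterization of $H_{L_w}^p(\rn)$ (not stated in this paper), and $L^r$-to-$L^2$ off-diagonal estimates for $t^{1/2}\nabla e^{-tL_w^*}$ (equivalently, for $e^{-tL_w}\tfrac{1}{w}\div(A\,\cdot)$), which do not follow from the stated hypothesis $tL_we^{-tL_w}\in\mathcal{F}_w(L^r-L^2)$; the $L^2$-$L^2$ version is available by Caccioppoli, but not the $L^r$-$L^2$ version you would need. So while a molecular argument could in principle be made to work, as sketched it does not close, and it bypasses rather than reproduces the paper's central new tool, Theorem~\ref{pro second}.
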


\begin{rem}\label{rem main}
(i) Since we need to apply the weighted Sobolev inequality (see \eqref{eq Sobolev} below)
in the proof of Theorem \ref{thm main}, to this end, we need to assume
$w^{-1}\in A_{2-\frac{2}{n}}(\rn)$ with $n\geq 3$ in Theorem \ref{thm main}.

(ii) In the case $w\equiv1$, Theorem \ref{thm main} reduces to \cite[Proposition 5.18]{HMM11},
where Hofmann et al. first established the Riesz transform characterizations of
Hardy spaces $H_L^p(\rn)$ associated with the second order elliptic operators
$L:=-\div(A\nabla)$; we point out that, in this case, the range of $p$ of Theorem
\ref{thm main} coincides with that of \cite[Proposition 5.18]{HMM11}.
\end{rem}

From  Theorem \ref{thm main} and Remark \ref{rem off} below,
we immediately deduce the following conclusions, the details being omitted.

\begin{cor}\label{cor1}
Let $q\in[1,2]$, $s\in (1,\fz]$ and $w\in A_{q}(\rn)\cap RH_s(\rn)$
satisfy $w^{-1}\in A_{2-\frac{2}{n}}(\rn)$ with $n\geq 3$.
If the matrix $A$ associated with $L_w$ is real symmetric, then, for all
$p\in(q[\frac 12(1-\frac1s)+\frac q2+\frac{1}{n}]^{-1},1]$,
the Hardy spaces $H_{L_w,\,{\rm Riesz}}^p(\rn)$ and $H_{L_w}^p(\rn)$ coincide with
equivalent quasi-norms.
\end{cor}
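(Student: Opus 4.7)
The plan is to deduce Corollary \ref{cor1} directly from Theorem \ref{thm main} by pinpointing the exponent $r$ for which the weighted $L^r-L^2$ full off-diagonal estimate $tL_we^{-tL_w}\in\cf_w(L^r-L^2)$ is known to hold under the additional hypotheses of real symmetry of $A$ and $w\in A_q(\rn)\cap RH_s(\rn)$. The off-diagonal input is precisely what Remark \ref{rem off} supplies: in the real symmetric case, weighted Davies--Gaffney-type $L^2-L^2$ off-diagonal bounds for $\{e^{-tL_w}\}_{t\geq 0}$ (and hence, by analyticity, for $\{tL_we^{-tL_w}\}_{t\geq 0}$) can be upgraded to $L^r-L^2$ full off-diagonal bounds via H\"older's inequality applied on annular decompositions, where the $RH_s(\rn)$ condition controls the weighted/unweighted comparison.

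The critical algebraic step is to align the lower endpoints of the $p$-ranges in the two statements. Theorem \ref{thm main} demands $p>q(\frac{1}{r}+\frac{q-1}{2}+\frac{1}{n})^{-1}$, whereas Corollary \ref{cor1} asserts the same conclusion for $p>q[\frac{1}{2}(1-\frac{1}{s})+\frac{q}{2}+\frac{1}{n}]^{-1}$. Setting these two expressions equal gives
\begin{eqnarray*}
\frac{1}{r}+\frac{q-1}{2} = \frac{1}{2}\lf(1-\frac{1}{s}\r)+\frac{q}{2},
\end{eqnarray*}
which solves to $\frac{1}{r}=1-\frac{1}{2s}$, that is, $r=\frac{2s}{2s-1}$. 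Since $s\in(1,\fz]$ forces $r\in[1,2)$, this is an admissible choice in Theorem \ref{thm main}, and the two threshold expressions match identically with no wasted room.

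With $r=\frac{2s}{2s-1}$ fixed, and with Remark \ref{rem off} furnishing $tL_we^{-tL_w}\in\cf_w(L^r-L^2)$, the remaining hypotheses of Theorem \ref{thm main}, namely $q\in[1,2]$, $w\in A_q(\rn)$, $w^{-1}\in A_{2-\frac{2}{n}}(\rn)$ and $n\geq 3$, are inherited verbatim from the assumptions of Corollary \ref{cor1}. A direct application of Theorem \ref{thm main} then yields $H_{L_w,\,{\rm Riesz}}^p(\rn)=H_{L_w}^p(\rn)$ with equivalent quasi-norms for $p$ in the asserted range. The substantive ingredient is therefore entirely concentrated in Remark \ref{rem off}: the real-symmetric/reverse-H\"older mechanism that produces the weighted $L^r-L^2$ estimate at the sharp exponent $r=\frac{2s}{2s-1}$. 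Once that is in hand, the deduction is a one-line substitution.
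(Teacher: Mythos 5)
Your proposal is correct and follows exactly the route the paper intends (the paper simply says the corollary follows from Theorem \ref{thm main} and Remark \ref{rem off} ``the details being omitted''): you correctly identify that, given $w\in RH_s(\rn)$, the sharp admissible exponent in Proposition \ref{pro equivalent}(i) (taking $p_0=1$ there) is $r=\frac{2s}{2s-1}\in[1,2)$, and you verify that substituting this $r$ into the threshold of Theorem \ref{thm main} reproduces the threshold of the corollary exactly. One small inaccuracy in your framing of Remark \ref{rem off}: the input there is not a Davies--Gaffney $L^2$--$L^2$ bound upgraded by H\"older, but rather weighted pointwise Gaussian kernel bounds (via Cruz-Uribe--Rios) yielding weighted $L^1$--$L^\infty$ off-diagonal estimates on balls, hence $\mathcal{O}_w(L^{p_0}-L^2)$ for all $p_0\in[1,2)$, which Proposition \ref{pro equivalent}(i) then converts to $\mathcal{F}_w(L^r-L^2)$ under the $RH_s$ constraint; this does not affect the validity of your deduction.
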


By Theorem \ref{thm main} and Proposition \ref{pro off-diagonal} below,
we immediately conclude the following conclusion, the details being omitted.

\begin{cor}\label{cor2}
Let $q\in[1,\frac43)$ and $w\in A_{q}(\rr^3)$ satisfy $w^{-1}\in A_{\frac43}(\rr^3)$.
Then, for $p\in(\frac{6q}{4+3q},1]$,
the Hardy spaces $H_{L_w,\,{\rm Riesz}}^p(\rr^3)$ and $H_{L_w}^p(\rr^3)$ coincide with
equivalent quasi-norms.
\end{cor}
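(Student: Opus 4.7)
The plan is to establish the two continuous inclusions
$$H_{L_w}^p(\rn)\hookrightarrow\rize\quad\text{and}\quad \rize\hookrightarrow H_{L_w}^p(\rn),$$
adapting the template of Hofmann--Mayboroda--McIntosh \cite{HMM11} to the weighted setting. The principal tools are the $\size$-molecular decomposition of $H_{L_w}^p(\rn)$ from \cite{ZCJY14}, the classical atomic decomposition of the weighted Hardy space $H_w^p(\rn;\cc^n)$, the hypothesis $tL_w e^{-tL_w}\in\cf_w(L^r-L^2)$ together with the off-diagonal estimates it forces on $\{\sqrt{t}\,\nab e^{-tL_w}\}_{t>0}$ via the sesquilinear form of $L_w$, and the weighted Sobolev inequality \eqref{eq Sobolev}.

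For the forward inclusion, given $f\in H_{L_w}^p(\rn)\cap L^2(w,\rn)$, first decompose $f=\sum_j\lz_j\az_j$ into $\size$-molecules $\az_j=L_w^M b_j$ adapted to balls $B_j$, with $M$ chosen sufficiently large in terms of $p,\,q,\,r,\,n$. Writing
$$\nab L_w^{-1/2}\az_j=c\int_0^\fz \nab e^{-tL_w}\bigl(L_w^M b_j\bigr)\,\frac{dt}{\sqrt{t}},$$
split the $t$-integral at the natural scale $r_{B_j}^2$ and, on each annulus $U_\ell(B_j)$, apply the hypothesized weighted $L^r-L^2$ off-diagonal estimate for $tL_w e^{-tL_w}$, the derived off-diagonal estimate for $\sqrt{t}\,\nab e^{-tL_w}$, and \eqref{eq Sobolev} to produce decay in $\ell$. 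Grouping the resulting pieces yields a representation of $\nab L_w^{-1/2}\az_j$ as a convergent sum of classical $H_w^p$-atoms with summable coefficients; summation over $j$ then gives $\|\nab L_w^{-1/2}f\|_{H_w^p(\rn)}\ls\|f\|_{H_{L_w}^p(\rn)}$.

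For the reverse inclusion, the plan hinges on the operator identity
$$L_w^{1/2}f=-\tfrac{1}{w}\,\div\bigl(A\nab L_w^{-1/2}f\bigr),$$
which is immediate from $L_w=L_w^{1/2}\circ L_w^{1/2}$ and \eqref{Lw}. Given $f\in\rize\cap L^2(w,\rn)$, set $\vec g:=\nab L_w^{-1/2}f\in H_w^p(\rn;\cc^n)$ and decompose $\vec g=\sum_j\lz_j\vec a_j$ atomically with each $\vec a_j$ supported in a ball $B_j$. Setting $\bz_j:=-L_w^{-1/2}\bigl[\tfrac{1}{w}\div(A\vec a_j)\bigr]$, one recovers $f=\sum_j\lz_j\bz_j$. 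The technical core is to verify that, up to a uniform multiplicative constant, each $\bz_j$ is an $\size$-molecule adapted to $B_j$. Using $L_w^{-1/2}=c\int_0^\fz e^{-tL_w}\,\frac{dt}{\sqrt t}$, one estimates $\|L_w^{-k}\bz_j\|_{L^2(w,U_\ell(B_j))}$ for $0\le k\le M$ by integrating by parts to move $\div$ onto $A\nab e^{-tL_w}$, splitting the $t$-integral at $r_{B_j}^2$, and exploiting both the off-diagonal decay and the atomic cancellation of $\vec a_j$ to produce the decay in the annular index required by the molecular condition.

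The main obstacle will be the reverse direction: making the integral defining $\bz_j$ converge unambiguously (which truly requires the twofold cancellation coming from the divergence of a mean-zero atom), and bookkeeping the three competing exponents---the off-diagonal index $r$, the Muckenhoupt doubling index of $w\in A_q(\rn)$, and the weighted Sobolev exponent forced by $w^{-1}\in A_{2-2/n}(\rn)$---so that the annular estimates yield a summable molecular decay exactly in the claimed range $p\in(q(\tfrac{1}{r}+\tfrac{q-1}{2}+\tfrac{1}{n})^{-1},1]$. Getting this arithmetic right, rather than any single individual off-diagonal or Sobolev estimate, is what constitutes the technical heart of the proof.
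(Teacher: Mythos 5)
The corollary is not a result the paper proves from scratch: it is derived by directly specializing Theorem \ref{thm main} with Proposition \ref{pro off-diagonal}, and the paper's own ``proof'' is literally one sentence. What you have written is a sketch of a re-proof of the general Theorem \ref{thm main} (moreover via a molecular decomposition route rather than the paper's Hardy--Sobolev atomic route through Theorem \ref{pro second}), and in doing so you treat ``the hypothesis $tL_w e^{-tL_w}\in\mathcal{F}_w(L^r-L^2)$'' as given. But in Corollary \ref{cor2} that is \emph{not} a hypothesis: there is no off-diagonal assumption in the statement, and the entire content of the corollary is that this estimate follows automatically from $w^{-1}\in A_{4/3}(\mathbb{R}^3)$. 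That step, which you never address, is exactly Proposition \ref{pro off-diagonal}.

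Concretely, with $n=3$ one has $2-\frac{2}{n}=\frac{4}{3}$ and $p_-=\frac{2n}{n+2}=\frac{6}{5}$, so the hypothesis $w^{-1}\in A_{4/3}(\mathbb{R}^3)$ gives, via the exponential perturbation argument and the Muckenhoupt--Wheeden fractional integral estimate underlying Proposition \ref{pro off-diagonal}, that $e^{-tL_w}\in\mathcal{F}_w(L^{6/5}-L^2)$, and hence $tL_we^{-tL_w}\in\mathcal{F}_w(L^{6/5}-L^2)$ by analyticity of the semigroup. Plugging $r=\frac{6}{5}$, $n=3$ into the range of Theorem \ref{thm main} yields
\begin{equation*}
q\left(\frac{1}{r}+\frac{q-1}{2}+\frac{1}{n}\right)^{-1}
= q\left(\frac{5}{6}+\frac{q-1}{2}+\frac{1}{3}\right)^{-1}
= q\cdot\frac{6}{4+3q}
= \frac{6q}{4+3q},
\end{equation*}
and demanding that this lower endpoint be strictly less than $1$ is exactly $q<\frac{4}{3}$, which explains both stated ranges. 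You never identify $r=\frac{6}{5}$, never derive the off-diagonal estimate from the $A_{4/3}$ assumption, and never carry out this arithmetic; those three steps \emph{are} the proof of the corollary. The large machinery you do sketch is Theorem \ref{thm main}, which should simply be cited rather than re-proved here.
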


We prove Theorem \ref{thm main} by following the strategy used in \cite[Proposition 5.18]{HMM11}.
The proof of Theorem \ref{thm main} rests on the atomic decomposition of the weighted
Hardy-Sobolev spaces (see Theorem \ref{pro second} below).

In what follows, let $\cs(\rn)$ denote the \emph{space of all Schwartz functions}
and $\cs^\prime(\rn)$ be the \emph{space of all Schwartz distributions}.

Let $\psi\in\cs(\rn)$, $\int_\rn \psi(x)\,dx=1$ and $\psi_t(x):=t^{-n}\psi(\frac{x}{t})$
for all $x\in\rn$ and $t\in(0,\fz)$.
For all $f\in\cs^\prime(\rn)$ and $x\in\rn$,
the \emph{non-tangential maximal function}
$\psi^\ast_\nabla(f)(x)$ is defined by setting
$$\psi^\ast_\nabla(f)(x):=\sup_{|x-y|<t,\ t\in (0,\fz)}|(\psi_t\ast f)(y)|.$$
Then, for $p\in(0,1]$ and $w\in A_\fz(\rn)$, $f\in\cs^\prime(\rn)$ is said
to belong to the \emph{weighted Hardy space} $H_w^p(\rn)$,
if $\psi^\ast_\nabla(f)\in L^p(w,\,\rn)$; moreover, its norm is given by
$\|f\|_{H_w^p(\rn)}:=\|\psi^\ast_\nabla(f)\|_{L^p(w,\,\rn)}$.

Let $\mathcal{S}_0(\rn)$ be the space of all Schwartz functions $\vz$ that
satisfy $\int_\rn \vz(x)\,dx=0$.
Then $\mathcal{S}_0(\rn)$ is a subspace of $\mathcal{S}(\rn)$ that
inherits the same topology as $\mathcal{S}(\rn)$. We denote
the dual of $\mathcal{S}_0(\rn)$ by $\mathcal{S}'_0(\rn)$.

Let $p\in (0,1]$ and $w\in A_\fz(\rn)$.
The \emph{weighted Hardy-Sobolev space} is defined as the set
\begin{equation*}
H^{1,p}_w(\rn):=\{f\in \mathcal{S}'_0(\rn):\ \nab f\in H^p_w(\rn)\}
\end{equation*}
with the quasi-norm
\begin{equation*}
\|f\|_{H^{1,p}_w(\rn)}:=\|\nab f\|_{H^p_w(\rn)}:=\sum_{j=1}^n\|\partial_j f\|_{H^p_w(\rn)},
\end{equation*}
where $\nab f:=(\partial_1f,\,\ldots,\,\partial_nf)$ stands for the
distributional derivatives of $f$ and $\nab f\in H^p_w(\rn)$ means
that, for all $j\in \{1,\,\ldots,\,n\}$, $\partial_jf\in H^p_w(\rn)$.

In what follows, for a subset $E\st\rn$, $C^\fz_c(E)$ denotes the set of all $C^\fz$ functions
with compact support in $E$.
For a ball $B$ of $\rn$ and $w\in A_\fz(\rn)$, we define $H_0^1(w,\,B)$ to be
the closure of $C^\fz_c(B)$ with respect to the norm
$$\|f\|_{H_0^1(w,\,B)}:=\lf\{\int_B \lf[|f(x)|^2+|\nab f(x)|^2\r]w(x)\,dx\r\}^{\frac12}.$$

Let  $p\in (0,1]$, $w\in A_\fz(\rn)$ and $B\st\rn$ be a ball.
A function $a\in H_0^1(w,\,B)$ is called an \emph{$H^{1,p}_w(\rn)$-atom} if

(i) $\supp a\st B$;

(ii) $\|a\|_{L^2(w,\,B)}\le r_B \|\nab a\|_{L^2(w,\,B)}$, where $r_B$ denotes the radius of $B$;

(iii) $\|\nab a\|_{L^2(w,\,B)}\le [w(B)]^{\frac12-\frac1p}$.

The following theorem gives an atomic decomposition for distributions in $H^{1,p}_w(\rn)$,
which plays a key role in the proof of Theorem \ref{thm main}.
\begin{thm}\label{pro second}
Let $w\in A_2(\rn)$, $p\in (0,1]$ and $f\in \mathcal{H}_0^1(w,\,\rn)\cap H^{1,p}_w(\rn)$.
Then there exist a sequence of $H^{1,p}_w(\rn)$-atoms, $\{\bz_k\}_{k\in\nn}$, and
a sequence of numbers, $\{\lz_k\}_{k\in\nn}\st\cc$, such that
\begin{equation}\label{eq 2.xx}
f=\sum_{k=1}^\fz \lz_k \bz_k\ \ \ \ \text{in}\ \mathcal{S}'_0(\rn),
\end{equation}
and
\begin{equation*}
\nab f=\sum_{k=1}^\fz \lz_k \nab \bz_k\ \ \ \ \text{in}\ L^2(w,\,\rn).
\end{equation*}
Moreover, there exists a positive constant $C$, independent of $f$, such that
\begin{equation*}
\lf\{\sum_{k=1}^\fz |\lz_k|^p\r\}^{\frac{1}{p}}\le C\|\nab f\|_{H_w^p(\rn)}.
\end{equation*}
\end{thm}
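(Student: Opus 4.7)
My plan is to establish Theorem \ref{pro second} by a Calder\'on--Zygmund-type decomposition of $\nab f$ adapted to the Hardy-Sobolev setting, using the weighted Poincar\'e inequality of Fabes, Kenig and Serapioni \cite{FKS82} to build atoms. Fix a radial Schwartz function $\psi$ with $\int_\rn\psi\,dx=1$ and let $N(\nab f)(x):=\sup_{|y-x|<t,\,t>0}|\psi_t*\nab f(y)|$ denote the non-tangential maximal function; since $\nab f\in H^p_w(\rn)$, one has $\|N(\nab f)\|_{L^p(w,\rn)}\sim\|\nab f\|_{H^p_w(\rn)}$. For $k\in\zz$, set $\Omega_k:=\{x\in\rn:\,N(\nab f)(x)>2^k\}$, which is open and decreasing in $k$. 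Whitney-decompose each $\Omega_k$ into cubes $\{Q_{k,j}\}_j$ with $\mathrm{diam}(Q_{k,j})\sim\dist(Q_{k,j},\Omega_k^c)$, and choose a smooth partition of unity $\{\phi_{k,j}\}$ on $\Omega_k$ subordinate to slight enlargements $\{Q^*_{k,j}\}$ with $\|\nab\phi_{k,j}\|_{L^\fz}\ls 1/\ell(Q_{k,j})$. Denote by $B_{k,j}$ a ball containing $Q^*_{k,j}$ of comparable radius.

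Set $c_{k,j}:=\big(\int\phi_{k,j}\,w\,dx\big)^{-1}\int f\phi_{k,j}\,w\,dx$ and $\eta_{k,j}:=(f-c_{k,j})\phi_{k,j}$. Since $w\in A_2(\rn)$, the weighted $(2,2)$-Poincar\'e inequality of \cite{FKS82} yields
\begin{equation*}
\|f-c_{k,j}\|_{L^2(w,B_{k,j})}\le C\,r_{B_{k,j}}\|\nab f\|_{L^2(w,B_{k,j})},
\end{equation*}
and combined with $|\nab\eta_{k,j}|\le\phi_{k,j}|\nab f|+|f-c_{k,j}||\nab\phi_{k,j}|$ this gives both $\|\eta_{k,j}\|_{L^2(w,B_{k,j})}\le C r_{B_{k,j}}\|\nab\eta_{k,j}\|_{L^2(w,B_{k,j})}$ and $\|\nab\eta_{k,j}\|_{L^2(w,B_{k,j})}\le C\|\nab f\|_{L^2(w,B_{k,j})}$. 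The remaining estimate $\|\nab f\|_{L^2(w,B_{k,j})}\ls 2^k[w(B_{k,j})]^{1/2}$ is obtained by a standard Fefferman-Stein-type argument: the Whitney cube $Q_{k,j}$ is adjacent to a substantial portion of $\Omega_k^c$ where $N(\nab f)\le 2^k$, and combining the a.e.\ pointwise bound $|\nab f|\le N(\nab f)$ (valid because $\nab f\in L^1_{\loc}(\rn)$, since $w\in A_2$) with $A_2$-doubling allows one to propagate this bound onto $B_{k,j}$. Setting $\lz_{k,j}:=C 2^k[w(B_{k,j})]^{1/p}$ and $\bz_{k,j}:=\lz_{k,j}^{-1}\eta_{k,j}$ (after the telescoping step below) then produces functions satisfying (i)--(iii), where the ambient dilation constants of the Poincar\'e inequality are absorbed by enlarging $B_{k,j}$ once and for all.

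Form the Calder\'on--Zygmund decomposition $f=g_k+\sum_j\eta_{k,j}$ and write the telescoping identity $f=\sum_{k\in\zz}(g_{k+1}-g_k)$. On $\Omega_k^c$ consecutive good parts agree, so each difference $g_{k+1}-g_k$ is supported in $\Omega_k$ and, using the two partitions of unity $\{\phi_{k,j}\}$ and $\{\phi_{k+1,j'}\}$ together with the finite overlap of the Whitney covers, splits into a countable family of pieces of the same type as $\eta_{k,j}$, each supported in a single ball $B_{k,j}$; these are the atoms $\bz_{k,j}$. Convergence of $\sum_{k,j}\lz_{k,j}\nab\bz_{k,j}$ to $\nab f$ in $L^2(w,\rn)$ follows from $\nab g_k\to\nab f$ as $k\to\fz$ and $\nab g_k\to 0$ as $k\to-\fz$ in $L^2(w,\rn)$, which is where the hypothesis $f\in\mathcal{H}_0^1(w,\rn)$ enters; convergence of $f=\sum_{k,j}\lz_{k,j}\bz_{k,j}$ in $\cs'_0(\rn)$ then follows by integration by parts against mean-zero Schwartz test functions. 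Finally, the $\ell^p$ bound comes from the layer-cake computation
\begin{equation*}
\sum_{k,j}|\lz_{k,j}|^p\ls\sum_{k\in\zz}2^{kp}w(\Omega_k)\ls\|N(\nab f)\|_{L^p(w,\rn)}^p\ls\|\nab f\|_{H^p_w(\rn)}^p.
\end{equation*}

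The technical heart of the argument is the local $L^2(w)$ control $\|\nab f\|_{L^2(w,B_{k,j})}\ls 2^k[w(B_{k,j})]^{1/2}$: one must translate the pointwise Hardy-space bound $|\nab f|\le N(\nab f)\le 2^k$ available on $\Omega_k^c$ into an averaged $L^2(w)$ bound on the Whitney ball $B_{k,j}\subset\Omega_k$, using the adjacency of $B_{k,j}$ to $\Omega_k^c$ and $A_2$-doubling of $w$, and one must verify that the Poincar\'e and cutoff constants can be absorbed into a single enlargement of $B_{k,j}$ without spoiling the $\ell^p$ summability. A secondary subtlety is the telescoping indexing: the two nested Whitney decompositions must be combined so that each atomic piece lives on a single well-defined ball, which is why the smooth partitions of unity rather than sharp cutoffs are used.
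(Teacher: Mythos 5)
Your strategy is genuinely different from the paper's: you attempt a direct Calder\'on--Zygmund/Whitney decomposition of level sets of the non-tangential maximal function $N(\nab f)$, whereas the paper (following Lou and Yang) lifts $\nab f$ to the tent space $T^p_w(\rnn)$ via $F(x,t)=\sum_j g_j\ast(\partial_j\vz)_t(x)$, applies the $(w,p,\fz)$-atomic decomposition of tent spaces, and pulls each tent atom back to an $H^{1,p}_w(\rn)$-atom $\bz_k=-\pi_\vz(\az_k)$; the $L^2(w)$ size condition (iii) then comes for free from the boundedness of $\pi_{\pt_j\vz}\colon T^2_w(\rnn)\to L^2(w,\rn)$ together with the trivial $T^2_w$-bound on tent atoms.

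The step you label ``the technical heart'' --- the local estimate $\|\nab f\|_{L^2(w,\,B_{k,j})}\ls 2^k[w(B_{k,j})]^{1/2}$ for Whitney balls $B_{k,j}\st\Omega_k$ --- is a genuine gap, and I do not believe it can be repaired by the tools you invoke. The pointwise bound $|\nab f(x)|\le N(\nab f)(x)$, valid a.e.\ for $\nab f\in L^1_{\loc}$, is useful only where $N(\nab f)$ is small, i.e.\ on $\Omega_k^\com$, while $B_{k,j}$ sits inside $\Omega_k$ where $N(\nab f)>2^k$ by construction. What adjacency to $\Omega_k^\com$ buys you is control of the \emph{signed} smooth average $|\psi_t*\nab f(z)|$ for $z\in B_{k,j}$ at the single scale $t\sim\ell(Q_{k,j})$ (the aperture of the cone with vertex at a nearby point of $\Omega_k^\com$ must reach $B_{k,j}$), not an $L^2(w,\,B_{k,j})$ norm of $\nab f$, which involves all small scales inside the ball. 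Indeed $B_{k,j}$ may be contained in $\Omega_{k'}$ for $k'\gg k$, in which case $\|\nab f\|_{L^2(w,\,B_{k,j})}$ can be arbitrarily large compared with $2^k[w(B_{k,j})]^{1/2}$; no Fefferman--Stein or $A_2$-doubling argument can propagate a bound from outside $\Omega_k$ into its interior. This is precisely the obstruction that makes the classical maximal-function CZ decomposition of $H^p$ produce $L^\fz$-atoms (where boundedness of the good parts $g_k$ is what is used), and why here, with the size condition phrased as an $L^2(w)$ bound on the \emph{gradient}, the paper routes through the tent space, whose atoms carry the needed $L^2$ structure by definition. To salvage a direct approach you would either have to replace $N(\nab f)$ by a square-function-type quantity and argue as in the tent space setting, or establish pointwise/local bounds on $\nab(g_{k+1}-g_k)$ by a delicate analysis of the coefficients $c_{k,j}$ (exploiting $\sum_j\nab\phi_{k,j}=0$ on $\Omega_k$ and a Poincar\'e-type cancellation), which is substantially more work than what you have written and is not supplied by your proposal.
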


Recall that Lou and Yang in \cite{LY07} gave an atomic characterization for the
classical Hardy-Sobolev space $H^{1,1}(\rn)$. Following their methods therein,
we prove Theorem \ref{pro second} through the atomic decomposition for tent spaces,
which was originally introduced in \cite{CMS85}.
However, we point out that the proof of Theorem \ref{pro second}
is slightly different from that of \cite[Lemma 1]{LY07}. We prove the size condition of
the $H^{1,p}_w(\rn)$-atoms by the local weighted Sobolev embedding theorems in
\cite{FKS82}, for $A_2(\rn)$ weights, instead of \cite[Chapter 3,\,Theorem 3.3.3]{S95}
which was used in the corresponding proof of \cite[Lemma 1]{LY07}.

This article is organized as follows. In Subsection \ref{s2.0}, we first recall
some notions and results on Muckenhoupt weights; then, in Subsection \ref{s2.1},
we establish the weighted off-diagonal estimates for $L_w$; in Subsection \ref{s2.2},
we introduce the weighted tent space and establish its atomic decomposition.
Section \ref{s3} is devoted to the proof of Theorem \ref{pro second},
while Theorem \ref{thm main} is proved in Section \ref{s4}.

We end this section by making some conventions on notation. Throughout this article,
$L_w$ always denotes a degenerate elliptic operator as in \eqref{Lw}.
We denote by $C$ a positive constant which is independent of the main parameters,
but it may vary from line to line.
We also use $C_{(\az, \bz,\ldots)}$ to denote a positive constant depending on
the parameters $\az$, $\bz$, $\ldots$.
The \emph{symbol $f\ls g$} means that $f\le Cg$.
If $f\ls g$ and $g\ls f$, then we write $f\sim g$.
For any measurable subset $E$ of $\rn$, we denote by $E^\com$ the \emph{set $\rn\bh E$}.
Let $\nn:=\{1,\,2,\,\ldots\}$ and $\zz_+:=\nn\cup\{0\}$.
For any closed set $F\st\rn$, we let
\begin{equation}\label{eq tent}
R(F):=\bigcup_{x\in F}\bgz(x),
\end{equation}
where $\bgz(x)$ for all $x\in F$ is as in \eqref{cone} with $\az=1$.
For any ball $B:=(x_B,r_B)\st\rn$, $\az\in(0,\fz)$ and $j\in\nn$,
we let $\az B:=B(x_B,\az r_B)$,
\begin{eqnarray}\label{eq-def of ujb}
U_0(B):=B\ \ \ \text{and}\ \ \ U_j(B):=(2^jB)\setminus (2^{j-1}B).
\end{eqnarray}

\section{Preliminaries}\label{s2}
\hskip\parindent
In this section, we first recall the definition of the \emph{Muckenhoupt weights} and
some of their properties. Then we establish the weighted full off-diagonal estimates for
$L_w$, which play a key role in the proofs of our main results.
Finally, we recall the definition of the weighted tent space and its atomic decomposition, which
is used in Section \ref{s3}.

\subsection{Muckenhoupt weights}\label{s2.0}

\hskip\parindent
Let $q\in[1,\fz)$. A nonnegative and locally integrable function $w$ on $\rn$
is said to belong to the \emph{Muckenhoupt class} $A_q(\rn)$,
if there exists a positive constant $C$ such that, for all balls $B\st\rn$, when $q\in(1,\fz)$,
$$\frac{1}{|B|}\int_B w(x)\,dx\lf\{\frac{1}{|B|}\int_B [w(x)]^{-\frac{1}{q-1}}\,dx\r\}^{q-1}\le C$$
or, when $q=1$,
$$\frac{1}{|B|}\int_B w(x)\,dx\le C\essinf_{x\in B}w(x).$$
We also let $A_\fz(\rn):=\cup_{q\in[1,\fz)}A_q(\rn)$ and $w(E):=\int_E w(x)\,dx$
for any measurable set $E\st\rn$.

Let $r\in(1,\fz]$. A nonnegative locally integrable function $w$ is said to
belong to the \emph{reverse H\"older class $RH_r(\rn)$}, if there exists
a positive constant $C$ such that, for all balls $B\st\rn$,
\begin{eqnarray*}
\lf\{\frac{1}{|B|}\int_B [w(x)]^r\,dx\r\}^{1/r}\le C\frac{1}{|B|}\int_B w(x)\,dx,
\end{eqnarray*}
where we replace  $\{\frac{1}{|B|}\int_B [w(x)]^r\,dx\}^{1/r}$ by $\|w\|_{L^\infty(B)}$ when $r=\infty$.

We recall some properties of Muckenhoupt weights and reverse H\"older classes
in the following two lemmas (see, for example, \cite{Du01} for their proofs).
\begin{lem}\label{lem Ap-1}
{\rm(i)} If $1\le p\le q\le\fz$, then $A_1(\rn)\st A_p(\rn)\st A_q(\rn)$.

{\rm(ii)} $A_\fz(\rn):=\cup_{p\in[1,\fz)}A_p(\rn)=\cup_{r\in(1,\fz]}RH_r(\rn)$.
\end{lem}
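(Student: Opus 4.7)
My plan for part (i) is to handle the nontrivial range $1 \le p \le q < \infty$, since $A_p(\rn) \subset A_\infty(\rn)$ is tautological from the definition $A_\infty(\rn) := \bigcup_{p \in [1,\infty)} A_p(\rn)$. When $1 < p \le q < \infty$, the key step is to apply Jensen's inequality (equivalently, H\"older's inequality) to the ``dual'' average of $w$. Setting $\theta := (p-1)/(q-1) \in (0,1]$ so that $1/(q-1) = \theta/(p-1)$, the concavity of $x \mapsto x^\theta$ yields
\begin{equation*}
\frac{1}{|B|}\int_B [w(x)]^{-1/(q-1)}\,dx \le \left(\frac{1}{|B|}\int_B [w(x)]^{-1/(p-1)}\,dx\right)^{(p-1)/(q-1)}.
\end{equation*}
Raising to the $(q-1)$st power and multiplying by $\frac{1}{|B|}\int_B w(x)\,dx$ converts the $A_p$ constant of $w$ directly into an $A_q$ constant. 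The boundary case $p = 1$ I would handle using $[w(x)]^{-1} \le C (\frac{1}{|B|}\int_B w)^{-1}$ for a.e.\ $x \in B$, which is immediate from the $A_1$ condition, and then integrating $w^{-1/(q-1)}$ against this pointwise bound.

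For part (ii) I would split the equality into two inclusions. The containment $\bigcup_{p \in [1,\infty)} A_p(\rn) \subset \bigcup_{r \in (1,\infty]} RH_r(\rn)$ is the classical Coifman--Fefferman self-improvement theorem: every $w \in A_p$ satisfies a reverse H\"older inequality with some exponent $r = 1 + \delta > 1$, where $\delta$ depends only on the $A_p$ constant of $w$. I would prove this through a Calder\'on--Zygmund stopping-time decomposition on the super-level sets $\{x \in B : w(x) > \lambda\}$, exploiting the fact that $w$ is doubling (a consequence of $w \in A_p$) together with a good-$\lambda$ absorption argument to bound $\int_B [w(x)]^{1+\delta}\,dx$ for $\delta > 0$ chosen small enough in terms of the $A_p$ constant.

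For the reverse inclusion $\bigcup_{r \in (1,\infty]} RH_r(\rn) \subset \bigcup_{p \in [1,\infty)} A_p(\rn)$, suppose $w \in RH_r$. H\"older's inequality combined with the $RH_r$ bound delivers the quantitative $A_\infty$ estimate
\begin{equation*}
w(E) \le C \left(\frac{|E|}{|B|}\right)^{1 - 1/r} w(B) \qquad \text{for every measurable } E \subset B.
\end{equation*}
Applying this to the dyadic sub-level sets $E_k := \{x \in B : w(x) \le 2^{-k} \frac{1}{|B|}\int_B w(y)\,dy\}$ and summing a geometric series then bounds $\frac{1}{|B|}\int_B [w(x)]^{-1/(p-1)}\,dx$ uniformly in $B$ once $p$ is chosen sufficiently large depending on $r$, which places $w$ into that $A_p(\rn)$.

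The main obstacle will be the Coifman--Fefferman self-improvement step in (ii); this is the only ingredient that genuinely requires a stopping-time argument and the doubling property of $w$, whereas every remaining piece reduces to careful applications of H\"older's inequality on the Muckenhoupt and reverse H\"older characterizations.
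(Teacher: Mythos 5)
The paper offers no proof of this lemma at all: it is stated as a standard fact with a pointer to Duoandikoetxea's book, so there is nothing internal to compare your argument against. Judged on its own terms, your part (i) is complete and correct (Jensen with exponent $\theta=(p-1)/(q-1)$ for $1<p\le q<\fz$, the pointwise essential-infimum bound for $p=1$, and the tautological inclusion into $A_\fz(\rn)$), and your sketch of the forward inclusion in (ii) correctly identifies the Coifman--Fefferman self-improvement as the step that genuinely needs a Calder\'on--Zygmund stopping-time argument.

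The reverse inclusion in (ii), however, has a genuine gap. From $w\in RH_r(\rn)$ and H\"older you correctly obtain $w(E)\le C(|E|/|B|)^{1-1/r}\,w(B)$, but this inequality bounds the \emph{weight} of $E$ by its \emph{Lebesgue measure}, whereas your dyadic summation needs the opposite control: to sum $\sum_k 2^{(k+1)/(p-1)}|E_k|/|B|$ you must show that $|E_k|/|B|$ decays geometrically in $k$, i.e.\ you need an estimate of the form $|E|/|B|\le C(w(E)/w(B))^{\ez}$. Your displayed inequality, applied to $E_k$ or to its complement, only yields a uniform bound $|E_k|\le(1-c_0)|B|$ for all $k$ (since $w(B\setminus E_k)\ge(1-2^{-k})w(B)$ forces $|B\setminus E_k|\gs|B|$), which is not summable against $2^{k/(p-1)}$. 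Passing from the condition ``$|E|$ small $\Rightarrow w(E)$ small'' to ``$w(E)$ small $\Rightarrow|E|$ small'' is precisely the nontrivial symmetry in the $A_\fz$ theory; it requires its own stopping-time/iteration argument (localizing to subcubes on which the average of $w$ is comparable to $2^{-k}w_B$), not merely H\"older's inequality. So your closing remark that only the Coifman--Fefferman step needs a stopping-time argument is incorrect: the inclusion $RH_r(\rn)\st\bigcup_p A_p(\rn)$ needs one too, and as written that direction does not close.
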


\begin{lem}\label{lem Ap-2}
Let $q\in[1,\fz)$ and $r\in (1,\fz]$. If a nonnegative measurable function
$w\in A_q(\rn)\cap RH_r(\rn)$, then there exists a constant $C\in(1,\fz)$
such that, for all balls $B\st\rn$ and any measurable subset $E$ of $B$,
$$C^{-1}\lf(\frac{|E|}{|B|}\r)^q\le\frac{w(E)}{w(B)}\le C\lf(\frac{|E|}{|B|}\r)^{\frac{r-1}{r}}.$$
\end{lem}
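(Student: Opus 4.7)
The plan is to handle the two inequalities separately using H\"older's inequality together with the defining conditions of $A_q(\rn)$ and $RH_r(\rn)$. Both are classical manipulations, but I will spell out which pieces to combine and in what order.

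For the upper bound, I would start from $w\in RH_r(\rn)$ and apply H\"older's inequality with exponents $r$ and $r/(r-1)$ directly on $E$:
\begin{equation*}
w(E)=\int_E w(x)\,dx\le |E|^{\frac{r-1}{r}}\lf\{\int_B [w(x)]^r\,dx\r\}^{\frac1r}.
\end{equation*}
The reverse H\"older inequality bounds the last factor by $C|B|^{\frac1r-1}\int_B w(x)\,dx=C|B|^{\frac1r}[w(B)/|B|]$, and collecting the $|B|$-powers yields $w(E)\le C(|E|/|B|)^{(r-1)/r}w(B)$. The case $r=\fz$ is handled by replacing the $L^r$ norm by the $L^\fz$ norm on $B$, in keeping with the convention recorded just before Lemma \ref{lem Ap-1}.

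For the lower bound, I would treat $q>1$ and $q=1$ separately. When $q>1$, the starting point is the identity $|E|=\int_E w(x)^{1/q}w(x)^{-1/q}\,dx$, to which H\"older with exponents $q$ and $q/(q-1)$ gives
\begin{equation*}
|E|\le [w(E)]^{1/q}\lf\{\int_B [w(x)]^{-\frac{1}{q-1}}\,dx\r\}^{(q-1)/q}.
\end{equation*}
The $A_q$ condition rewrites as $\int_B w^{-1/(q-1)}\,dx\le C^{1/(q-1)}|B|^{q/(q-1)}[w(B)]^{-1/(q-1)}$, and substituting this back leads to $(|E|/|B|)^q\le C\,w(E)/w(B)$, which is the desired inequality. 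When $q=1$, I would instead use $w(E)\ge |E|\essinf_{x\in B}w(x)$ and then apply the $A_1$ condition $\essinf_{x\in B}w\ge C^{-1}w(B)/|B|$ to conclude.

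I do not anticipate a genuine obstacle here; the only bookkeeping point to watch is keeping track of which power of $|B|$ arises from $A_q$ versus $RH_r$ when raising to the $(q-1)/q$ or $1/r$ power, and handling the endpoint cases $q=1$ and $r=\fz$ by the convention already fixed in the statement of the reverse H\"older class.
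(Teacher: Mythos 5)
Your proof is correct and is precisely the standard argument for this classical fact; the paper itself omits the proof and simply cites Duoandikoetxea's book, where the same H\"older-plus-definition computations appear. Both halves of your argument, including the endpoint cases $q=1$ and $r=\fz$, check out.
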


\subsection{Weighted full off-diagonal estimates for $L_w$}\label{s2.1}
\hskip\parindent
In this subsection, we first recall the definition of weighted off-diagonal estimates on balls
from \cite{am07ii}. Then we show that, if the matrix $A$ associated with $L_w$
is real symmetric,
then, for any $p\in[1,\,2)$, $tL_we^{-tL_w}\in\mathcal{F}_w(L^p-L^2)$.
Finally, we prove that, in the general case,
for $n\geq 3$, $k\in \zz_+$ and $p_-=\frac{2n}{n+2}$,
$(tL_w)^k e^{-tL_w}\in\mathcal{F}_w(L^{p_-}-L^2)$.

\begin{defn}[\cite{am07ii}]\label{def ODEB}
Let $p,\,q\in[1,\fz]$ with $p\le q$, $w\in A_\fz(\rn)$ and $\{T_t\}_{t>0}$ be a family of sublinear
operators. The family $\{T_t\}_{t>0}$ is said to satisfy \emph{weighted $L^p$-$L^q$
off-diagonal estimates on balls}, denoted by $T_t\in \mathcal{O}_{w}(L^p-L^q)$, if
there exist constants $\theta_1,\,\theta_2 \in[0,\fz)$ and $C,\,c\in(0,\fz)$ such that,
for all $t\in(0,\fz)$ and all balls $B:=B(x_B,r_B)\subset\rn$ with $x_B\in\rn$ and $r_B\in (0,\fz)$,
and $f\in L^p_{\loc}(w,\,\rn)$,
\begin{eqnarray}\label{ODEB1}
&&\lf\{\frac{1}{w(B)}\dint_B|T_t\lf(\chi_Bf\r)(x)|^qw(x)\,dx\r\}^{1/q}\\
&&\hs\le C
\lf[\Upsilon\lf(\frac{r_B}{t^{1/2}}\r)\r]^{\theta_2}\lf\{\frac{1}{w(B)}\dint_B
|f(x)|^pw(x)\,dx\r\}^{1/p}\noz
\end{eqnarray}
and, for all $j\in\nn$ with $j\ge3$,
\begin{eqnarray*}
&&\lf\{\frac{1}{w(2^jB)}\dint_{U_j(B)}|T_t\lf(\chi_{B}f\r)(x)|^qw(x)\,dx\r\}^{1/q}\\ \nonumber
&&\hs\le C
2^{j\theta_1}\lf[\Upsilon\lf(\frac{2^jr_B}{t^{1/2}}\r)\r]^{\theta_2}
e^{-c\frac{(2^jr_B)^{2}}{t}}\lf\{\frac{1}{w(B)}\dint_B
|f(x)|^pw(x)\,dx\r\}^{1/p}
\end{eqnarray*}
and
\begin{eqnarray}\label{ODEB3}
&&\lf\{\frac{1}{w(B)}\dint_B |T_t(\chi_{U_j(B)}f)(x)|^qw(x)\,dx\r\}^{1/q}\\ \nonumber
&&\hs\le C 2^{j\theta_1}\lf[\Upsilon\lf(\frac{2^jr_B}{t^{1/2}}\r)\r]^{\theta_2}
e^{-c\frac{(2^jr_B)^{2}}{t}}\lf\{\frac{1}{w(2^jB)}\dint_{U_j(B)}
|f(x)|^pw(x)\,dx\r\}^{1/p},
\end{eqnarray}
where $U_j(B)$ is as in \eqref{eq-def of ujb} and, for all $s\in(0,\fz)$,
\begin{eqnarray*}
\Upsilon(s):=\max\lf\{s,\frac{1}{s}\r\}.
\end{eqnarray*}
\end{defn}

By borrowing some ideas from the proof of \cite[Proposition 3.2]{am07ii}, we
obtain the following conclusions.

\begin{prop}\label{pro equivalent}
Let $w\in A_\fz(\rn)\cap RH_s(\rn)$ with $s\in (1,\fz]$ and $\{T_t\}_{t\geq 0}$
be a family of sublinear operators.

(i) If $s=(1-\frac{p_0}{2})\frac{p}{p-p_0}$, $1\le p_0<p<2$ and
$T_t\in \mathcal{O}_{w}(L^{p_0}-L^2)$, then
$T_t\in\mathcal{F}_w(L^p-L^2)$.

(ii) If $s=\fz$ and $T_t\in \mathcal{O}_{w}(L^{p_0}-L^2)$ with $1\le p_0<2$,
then $T_t\in\mathcal{F}_w(L^{p_0}-L^2)$.
\end{prop}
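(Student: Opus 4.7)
The plan is to promote the on-balls estimates to full off-diagonal estimates, following the scheme of Auscher--Martell \cite[Proposition 3.2]{am07ii}. Since both types of estimates use $L^2(w)$ on the output side (the weight $[w]^{q/2}$ in Definition \ref{def off-diagonal} reduces to $w$ when $q=2$), the real content of the proof is to convert the input integral from $L^{p_0}(w)$ to $L^p(w^{p/2})$ and to extract the Gaussian decay in $d(E,F)^2/t$ from the localized exponential decay in \eqref{ODEB3}.

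I first fix closed sets $E,F\st\rn$ and $f\in L^p(w^{p/2},\rn)$ with $\supp f\st E$, then cover $F$ by balls $\{B_i\}_i$ of radius $r_B=\sqrt{t}$ centered at points $x_i\in F$ with bounded overlap, so that $\|T_t f\|_{L^2(F,w)}^2\le\sum_i \|T_t f\|_{L^2(B_i,w)}^2$. Fixing $i$ and splitting $f=\sum_{j\ge 0} f\chi_{U_j(B_i)}$, a single application of \eqref{ODEB1} and \eqref{ODEB3} together with the identity $\Upsilon(2^j r_B/\sqrt{t})=2^j$ yields
\[
\|T_t(f\chi_{U_j(B_i)})\|_{L^2(B_i,w)}\ls w(B_i)^{1/2}\,2^{j(\theta_1+\theta_2)} e^{-c 4^j}\lf(\frac{1}{w(2^jB_i)}\int_{U_j(B_i)}|f|^{p_0}w\,dx\r)^{1/p_0}.
\]

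The input conversion is the heart of the argument. For part (i), H\"older's inequality with exponents $p/p_0$ and $p/(p-p_0)$, combined with $w\in RH_s$, gives
\[
\int_{U_j(B_i)}|f|^{p_0} w\,dx \le \lf(\int_{U_j(B_i)}|f|^p w^{p/2}\,dx\r)^{p_0/p}\lf(C|2^jB_i|^{1-s}[w(2^jB_i)]^s\r)^{(p-p_0)/p}.
\]
The definition $s=(1-p_0/2)p/(p-p_0)$ is precisely equivalent to the identity $s(p-p_0)/p=1-p_0/2$, and this makes the $w(2^jB_i)$ factors combine with the $w(2^jB_i)^{-1/p_0}$ from the preceding display to leave exactly the volume factor $|2^jB_i|^{1/2-1/p}$, producing the required scaling $t^{-\frac{n}{2}(1/p-1/2)}$ together with a harmless polynomial factor $(2^j)^{n(1/2-1/p)}$. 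For part (ii), $w\in RH_\fz$ gives $w\ls w(2^jB_i)/|2^jB_i|$ almost everywhere on $2^jB_i$, directly implementing the same conversion with $p=p_0$ and without any appeal to H\"older.

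To close, I extract the Gaussian decay in $d(E,F)$: terms with $U_j(B_i)\cap E=\emptyset$ vanish, and if the intersection is nonempty then $x_i\in F$ forces $2^j\sqrt{t}\gs d(E,F)$, so $e^{-c4^j}\le e^{-cd(E,F)^2/(2t)}e^{-c4^j/2}$. The residual super-exponential decay $e^{-c4^j/2}$ absorbs all polynomial factors in $2^j$; H\"older together with the disjointness of $\{U_j(B_i)\}_j$ bounds the sum in $j$ by $\|f\|_{L^p(E,w^{p/2})}$; and the bounded overlap of $\{B_i\}$ handles the sum in $i$. The main technical obstacle is the bookkeeping of weight exponents in the input conversion; once the identity $s(p-p_0)/p=1-p_0/2$ is recognized as the precise content of the hypothesis on $s$, the remainder is a routine dyadic decomposition.
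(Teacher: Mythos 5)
Your proposal is correct, and it takes a mildly but genuinely different route from the paper's. The paper, following Auscher--Martell, runs a two-case argument: when $t<[d(E,F)/16]^2$ it covers $\rn$ by balls of radius $r=d(E,F)/16>\sqrt t$ so that only annuli with $j\ge 3$ meet $E$, and it splits the factor $e^{-c4^jr^2/t}$ into $e^{-c_14^j}e^{-c_2r^2/t}$ to produce the Gaussian term; when $t\gs d(E,F)^2$ it switches to balls of radius $\sqrt t$ and notes the Gaussian factor is then harmless. You instead always cover with balls of radius $\sqrt t$, keep all dyadic scales $j\ge 0$, and recover the Gaussian decay from the geometric observation that $\supp f\subset E$ forces nonvanishing terms to have $2^j\sqrt t\gs d(E,F)$, hence $e^{-c4^j}\le e^{-cd(E,F)^2/(2t)}e^{-c4^j/2}$. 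The input conversion via H\"older plus $w\in RH_s$ is identical, and your exponent bookkeeping is right: with $s(p-p_0)/p=1-p_0/2$ the weight factors reduce to $w(B_i)^{1/2}w(2^jB_i)^{-1/2}\le 1$ and the remaining volume factor is exactly $|2^jB_i|^{1/2-1/p}\sim (2^j)^{n(1/2-1/p)}\,t^{\frac n2(\frac12-\frac1p)}$, giving the desired $t$-scaling with a decaying polynomial in $2^j$. The gain of your variant is a cleaner exposition with no case split and a transparent source of the $t$-scaling. Two routine points to make it airtight, both handled as in the paper: the annuli $j\in\{0,1,2\}$ must be treated via \eqref{ODEB1} applied to the dilated ball $4B_i$ rather than \eqref{ODEB3}, and the final double sum over $(i,j)$ needs the explicit Cauchy--Schwarz in $j$ followed by $\|\cdot\|_{\ell^{2/p}}\le\|\cdot\|_{\ell^1}$ and the bounded-overlap bound $\sum_i\chi_{U_j(B_i)}\ls 2^{nj}$, the super-exponential decay absorbing the $2^{nj\cdot 2/p}$ factor.
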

\begin{proof}
To show (i),
let $E,\,F$ be two closed sets of $\rn$, $t\in [0,\fz)$ and
$f\in L^p(w^{\frac{p}{2}},\,E)$ with $\supp f\st E$.
We now consider two cases.

\emph{Case 1) $d(E,\,F)>0$ and $0\le t<[\frac{d(E,\,F)}{16}]^2$}.
In this case, let $r:=\frac{d(E,\,F)}{16}$ and choose a family of balls,
$B_k:=B(x_k,\,r)$ with $k\in\nn$ and $x_k\in\rn$, such that,
for any $k_1\neq k_2$, $|x_{k_1}-x_{k_2}|\geq \frac{r}{2}$ and $\cup_{k\in\nn}B_k=\rn$.
Observe that, if $x\in F$ and $y\in E$, then $|x-y|\geq d(E,\,F)=16r$.
Thus, if $x\in B_k$ for some $k\in\nn$,
then $y\notin 4B_k$, which further implies that there exists some
$j\geq 3$ such that $y\in U_j(B_k)$.
Let $\mathcal{A}:=\{k\in\nn:\ F\cap B_k\neq\emptyset\}$.
By the fact that $\supp f\st E$, the Minkowski inequality, the H\"{o}lder inequality and
\eqref{ODEB3} with $q=2$, $p=p_0$ and $B=B_k$, we see that
\begin{eqnarray}\label{eq 2.o2}
\|T_t(f)\|^2_{L^2(w,\,F)}
&&\le\sum_{k\in\ca}\int_{B_k}\lf|T_t(f)(x)\r|^2w(x)\,dx\\
&&\le\sum_{k\in\ca}\lf\{\sum_{j=3}^\fz\lf[\int_{B_k}
\lf|T_t(\chi_{U_j(B_k)}f)(x)\r|^{2}w(x)\,dx\r]^{\frac{1}{2}}\r\}^{2}\noz\\
&&\ls\sum_{k\in\ca}\lf\{\sum_{j=3}^\fz 2^{j\tz_1}\lf[\Upsilon\lf(\frac{2^jr}{\sqrt{t}}\r)\r]^{\tz_2}
e^{-c\frac{4^jr^2}{t}}[w(2^jB_k)]^{\frac{1}{2}-\frac{1}{p_0}}\r.\noz\\
&&\hs\times\lf.\lf[\int_{U_j(B_k)}|f(x)|^{p_0}w(x)\,dx\r]^{\frac{1}{p_0}}\r\}^2\noz\\
&&\ls\sum_{k\in\ca}\lf\{\sum_{j=3}^\fz 2^{j\tz_1}\lf[\Upsilon\lf(\frac{2^jr}{\sqrt{t}}\r)\r]^{\tz_2}
e^{-c\frac{4^jr^2}{t}}[w(2^jB_k)]^{\frac12-\frac{1}{p_0}}\r.\noz\\
&&\hs\times\lf.\lf[\int_{U_j(B_k)}|f(x)|^{p}[w(x)]^{\frac{p}{2}}\,dx\r]^{\frac{1}{p}}\r.\noz\\
&&\hs\times\lf.\lf[\int_{2^jB_k}[w(x)]^{(1-\frac{p_0}{2})
\frac{p}{p-p_0}}\,dx\r]^{\frac{1}{p_0}-\frac{1}{p}}\r\}^2.\noz
\end{eqnarray}
Since $p\in (1,2)$, we see that $s=(1-\frac{p_0}{2})\frac{p}{p-p_0}\in(1,\fz)$.
From the fact $w\in RH_s(\rn)$ and the assumption $0\le t<r^2$, it follows that,
for all $k\in\ca$ and $j\geq 3$,
\begin{eqnarray}\label{eq 2.o1}
[w(2^jB_k)]^{\frac12-\frac{1}{p_0}}\lf[\int_{2^jB_k}[w(x)]^{(1-\frac{p_0}{2})
\frac{p}{p-p_0}}\r]^{\frac{1}{p_0}-\frac{1}{p}}
&&\ls |2^jB_k|^{\frac12-\frac{1}{p}}\\
&&\ls r^{n(\frac12-\frac1p)}\ls t^{\frac{n}{2}(\frac12-\frac1p)}.\noz
\end{eqnarray}
It is easy to see that there exist positive constants $c_1$ and $c_2$ such that
\begin{equation*}
2^{j\tz_1}\lf[\Upsilon\lf(\frac{2^jr}{\sqrt{t}}\r)\r]^{\tz_2}e^{-c\frac{4^jr^2}{t}}
\ls e^{-c_14^j}e^{-c_2\frac{r^2}{t}}.
\end{equation*}
By this, \eqref{eq 2.o1}, \eqref{eq 2.o2}, the H\"{o}lder inequality, $2/p>1$ and
the fact that $r=\frac{d(E,\,F)}{16}$, we know that
there exist positive constants $c$ and $\wz c$ such that
\begin{eqnarray}\label{eq 2.o3}
&&\|T_t(f)\|^2_{L^2(w,\,F)}\\
&&\hs\ls\lf[t^{-\frac{n}2(\frac1p-\frac12)}e^{-c_2\frac{r^2}{t}}\r]^2
\sum_{k\in\ca}\lf\{\sum_{j=3}^\fz e^{-c_14^j}
\lf[\int_{U_j(B_k)}|f(x)|^p[w(x)]^\frac p2\,dx\r]^{\frac1p}\r\}^2\noz\\
&&\hs\ls\lf[t^{-\frac{n}2(\frac1p-\frac12)}e^{-c_2\frac{r^2}{t}}\r]^2
\sum_{k\in\ca}\lf\{\sum_{j=3}^\fz e^{-\wz c4^j}
\int_{U_j(B_k)}|f(x)|^p[w(x)]^\frac p2\,dx\r\}^{\frac2p}\noz\\
&&\hs\ls\lf[t^{-\frac{n}2(\frac1p-\frac12)}e^{-c\frac{[d(E,\,F)]^2}{t}}\r]^2
\lf\{\sum_{k\in\ca}\sum_{j=3}^\fz e^{-\wz c4^j}
\int_{U_j(B_k)}|f(x)|^p[w(x)]^\frac p2\,dx\r\}^{\frac2p}\noz\\
&&\hs\ls\lf[t^{-\frac{n}2(\frac1p-\frac12)}e^{-c\frac{[d(E,\,F)]^2}{t}}\r]^2
\lf\{\int_E\sum_{j=3}^\fz\sum_{k\in\ca}e^{-\wz c4^j}
\chi_{U_j(B_k)}(x)|f(x)|^p [w(x)]^\frac p2\,dx\r\}^{\frac2p}.\noz
\end{eqnarray}
Notice that, for all $x\in\rn$, there exists some $k_0\in\nn$
such that $x\in B_{k_0}$. Then, we know that, for all $j\geq 3$,
\begin{eqnarray*}
\sum_{k\in\ca}\chi_{U_j(B_k)}(x)\le\sharp\{k\in\nn :\ x\in 2^{j}B_k\}
\le\sharp\{k\in\nn :\ x_k\in B(x_{k_0}, 2^{j+1}r)\}
\le 2^{n(j+3)},
\end{eqnarray*}
which further implies that there exists a positive constant $C$ such that,
for all $x\in\rn$,
\begin{eqnarray*}
\sum_{j=3}^\fz\sum_{k\in\ca}e^{-\wz c4^j}\chi_{U_j(B_k)}(x)
\le\sum_{j=3}^\fz 2^{n(j+3)}e^{-\wz c4^j}\le C<\fz.
\end{eqnarray*}
From this and \eqref{eq 2.o3}, we deduce that, for all $0\le t<[\frac{d(E,\,F)}{16}]^2$,
\begin{eqnarray}\label{eq 2.oo}
\|T_t(f)\|_{L^2(w,\,F)}\ls t^{-\frac{n}2(\frac1p-\frac12)}e^{-c\frac{[d(E,\,F)]^2}{t}}
\|f\|_{L^p(w^{\frac{p}{2}},\,E)}.
\end{eqnarray}

\emph{Case 2) $t\geq [\frac{d(E,\,F)}{16}]^2$}. In this case, let $r:=\sqrt{t}$.
We also choose a family of balls, $\{B_k\}_{k\in\nn}=\{B(x_k,r)\}_{k\in\nn}$, as in \emph{Case 1)},
where $k\in\nn$ and $x_k\in\rn$.
Let also
$$\mathcal{A}:=\{k\in\nn:\ F\cap B_k\neq\emptyset\}.$$
Then, by the Minkowski inequality,
we see that
\begin{eqnarray*}
\|T_t(f)\|^2_{L^2(w,\,F)}
&&\le\sum_{k\in\ca}\int_{B_k}\lf|T_t(f)(x)\r|^2w(x)\,dx\\
&&\le\sum_{k\in\ca}\lf\{\sum_{j=0}^\fz\lf[\int_{B_k}
\lf|T_t(\chi_{U_j(B_k)}f)(x)\r|^{2}w(x)\,dx\r]^{\frac{1}{2}}\r\}^{2}.
\end{eqnarray*}
For $j\in\{0,\,1,\,2\}$, we use \eqref{ODEB1} with $B=4B_k$ to bound it.
Then, by \eqref{ODEB3} and an argument similar to that used in the estimate of \emph{Case 1)}, we obtain
\begin{eqnarray*}
\|T_t(f)\|_{L^2(w,\,F)}\ls t^{-\frac{n}2(\frac1p-\frac12)}
\|f\|_{L^p(w^{\frac{p}{2}},\,E)}\ls
t^{-\frac{n}2(\frac1p-\frac12)}e^{-c\frac{[d(E,\,F)]^2}{t}}
\|f\|_{L^p(w^{\frac{p}{2}},\,E)}.
\end{eqnarray*}
This, together with \eqref{eq 2.oo}, then completes the proof of
Proposition \ref{pro equivalent}(i).

To show (ii), we use the same method as that used in the proof of (i).
In this case, we also first assume that $d(E,\,F)>0$ and $0\le t<[\frac{d(E,\,F)}{16}]^2$.
By the same argument as that used in \eqref{eq 2.o2}, we know that
\begin{eqnarray}\label{eq 2.ox}
\|T_t(f)\|^2_{L^2(w,\,F)}
&&\le\sum_{k\in\ca}\int_{B_k}\lf|T_t(f)(x)\r|^2w(x)\,dx\\
&&\le\sum_{k\in\ca}\lf\{\sum_{j=3}^\fz\lf[\int_{B_k}
\lf|T_t(\chi_{U_j(B_k)}f)(x)\r|^{2}w(x)\,dx\r]^{\frac{1}{2}}\r\}^{2}\noz\\
&&\ls\sum_{k\in\ca}\lf\{\sum_{j=3}^\fz 2^{j\tz_1}\lf[\Upsilon\lf(\frac{2^jr}{\sqrt{t}}\r)\r]^{\tz_2}
e^{-c\frac{4^jr^2}{t}}[w(2^jB_k)]^{\frac{1}{2}-\frac{1}{p_0}}\r.\noz\\
&&\hs\times\lf.\lf[\int_{U_j(B_k)}|f(x)|^{p_0}w(x)\,dx\r]^{\frac{1}{p_0}}\r\}^2\noz\\
&&\ls\sum_{k\in\ca}\lf\{\sum_{j=3}^\fz 2^{j\tz_1}\lf[\Upsilon\lf(\frac{2^jr}{\sqrt{t}}\r)\r]^{\tz_2}
e^{-c\frac{4^jr^2}{t}}[w(2^jB_k)]^{\frac12-\frac{1}{p_0}}\r.\noz\\
&&\hs\times\lf.\lf[\int_{U_j(B_k)}|f(x)|^{p_0}[w(x)]^{\frac{p_0}{2}}\,dx\r]^{\frac{1}{p_0}}
\|w\|_{L^\fz(2^jB_k)}^{\frac{1}{p_0}-\frac{1}{2}}\r\}^2.\noz
\end{eqnarray}
From $w\in RH_\fz(\rn)$ and $0\le t<r^2$, it follows that,
for all $k\in\mathcal{A}$ and $j\geq 3$,
\begin{eqnarray*}
\|w\|_{L^\fz(2^jB_k)}^{\frac{1}{p_0}-\frac12}[w(2^jB_k)]^{\frac12-\frac{1}{p_0}}
\ls |2^jB_k|^{\frac{1}{2}-\frac{1}{p_0}}\ls r^{n(\frac12-\frac{1}{p_0})}
\ls t^{\frac n2(\frac12-\frac{1}{p_0})}.
\end{eqnarray*}
By this, \eqref{eq 2.ox} and an argument similar to that used in the proof of \emph{Case 1)} of (i),
we see that, for all $0\le t<[\frac{d(E,\,F)}{16}]^2$,
\begin{eqnarray}\label{eq 2.ox1}
\|T_t(f)\|_{L^2(w,\,F)}\ls t^{-\frac{n}2(\frac1{p_0}-\frac12)}e^{-c\frac{[d(E,\,F)]^2}{t}}
\|f\|_{L^{p_0}(w^{\frac{p_0}{2}},\,E)}.
\end{eqnarray}

When $t\geq[\frac{d(E,\,F)}{16}]^2$, by an argument similar to that used in
the proof of \emph{Case 2)} of (i),
we know \eqref{eq 2.ox1} also holds true.
This finishes the proof of Proposition \ref{pro equivalent}(ii) and hence
Proposition \ref{pro equivalent}.
\end{proof}

\begin{rem}\label{rem off}
From \cite[Theorems 1 and 5]{CR14}, we deduce that,
if the matrix $A$ associated with $L_w$ (see \eqref{Lw}) is real symmetric,
then $\{e^{-tL_w}\}_{t\geq 0}$ and $\{tL_we^{-tL_w}\}_{t\geq 0}$
have heat kernels. Moreover, the heat kernels both satisfy the weighted
Gaussian bounds (see \cite[p.\,1\,(2)]{CR14}).
By \cite[Proposition 2.2]{am07ii}, we know that
the weighted Gaussian bounds (\cite[p.\,1\,(2)]{CR14}) is equivalent to the weighted $L^1$-$L^\fz$
off-diagonal estimates on balls.
Since $\mathcal{O}_{w}(L^1-L^\fz)\st \mathcal{O}_{w}(L^p-L^q)$ with $1\le p\le q\le \fz$
(see \cite[Comments 4]{am07ii}), if follows that, if the matrix $A$ is real symmetric,
then, for any $p_0\in[1,\,2)$, $tL_we^{-tL_w}\in \mathcal{O}_w(L^{p_0}-L^2)$.
From Proposition \ref{pro equivalent}, we further deduce that, for any $p\in[1,\,2)$,
$tL_we^{-tL_w}\in \mathcal{F}_w(L^p-L^2)$.
\end{rem}

Generally, we have the following conclusion.
\begin{prop}\label{pro off-diagonal}
For $n\geq 3$,
let $w^{-1}\in A_{2-\frac{2}{n}}(\rn)$, $p_-=\frac{2n}{n+2}$ and
$L_w$ be the degenerate elliptic operator satisfying \eqref{degenerate C1}
and \eqref{degenerate C2}. Then there exist positive constants $C$ and
$\wz C$ such that, for
all closed sets $E$ and $F$, $t\in(0,\,\fz)$ and $f\in L^{p_-}(w^{\frac {p_-} {2}},\,\rn)$
supported in $E$,
\begin{eqnarray}\label{eq off-diagonal}
\lf\|e^{-tL_w}(f)\r\|_{L^2(w,\,F)} \le C t^{-\frac{n}{2}(\frac{1}{p_-}-\frac{1}{2})}
e^{-\wz C\frac{[d(E,\,F)]^2}{t}}\|f\|_{L^{p_-}(w^{\frac{p_-}{2}},\,E)}.
\end{eqnarray}
\end{prop}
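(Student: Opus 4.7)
The plan is to apply Davies' exponential-perturbation method in the weighted setting, combined with a weighted Sobolev (or Nash) inequality furnished by the hypothesis $w^{-1}\in A_{2-2/n}(\rn)$. More precisely, I would (i) establish a twisted $L^2(w,\,\rn)$-bound for the conjugated semigroup $T_t^\rho:=e^{\rho\phi}e^{-tL_w}e^{-\rho\phi}$ with $\phi$ bounded Lipschitz and $\|\nab\phi\|_\fz\le 1$, (ii) upgrade this to a twisted $L^{p_-}(w^{p_-/2},\,\rn)\to L^2(w,\,\rn)$ ultracontractivity with the sharp decay $t^{-1/2}$, and (iii) optimize $\phi$ and $\rho$ to extract the Gaussian factor $e^{-\wz C d(E,F)^2/t}$.

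For (i), introduce the twisted sesquilinear form $\mathfrak{a}_\rho(u,v):=\mathfrak{a}(e^{-\rho\phi}u,\,e^{\rho\phi}v)$. Applying the product rule $\nab(e^{\pm\rho\phi}v)=e^{\pm\rho\phi}(\nab v\pm\rho v\nab\phi)$, invoking the degenerate-elliptic bounds \eqref{degenerate C1}--\eqref{degenerate C2}, and absorbing cross-terms by Cauchy--Schwarz with the small-parameter trick $2ab\le \frac{\lz}{2\Lambda}a^2+\frac{2\Lambda}{\lz}b^2$, one obtains
\begin{equation*}
\Re\mathfrak{a}_\rho(u,u)\ge\frac{\lz}{2}\|\nab u\|_{L^2(w,\,\rn)}^2-C\rho^2\|u\|_{L^2(w,\,\rn)}^2.
\end{equation*}
Thus the shifted form $\mathfrak{a}_\rho+C\rho^2\mathrm{Id}$ remains bounded and accretive on $\mathcal{H}_0^1(w,\,\rn)$ uniformly in $\phi$, which yields the $L^2(w,\,\rn)$-bound $\|T_t^\rho\|_{L^2(w,\,\rn)\to L^2(w,\,\rn)}\le e^{C\rho^2 t}$.

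For (ii), the assumptions $w\in A_2(\rn)$ and $w^{-1}\in A_{2-2/n}(\rn)$ together deliver the weighted Sobolev inequality \eqref{eq Sobolev} referred to in Remark \ref{rem main}(i); after the change of unknown $h:=gw^{1/2}$, which identifies $\|g\|_{L^2(w,\,\rn)}=\|h\|_{L^2(\rn)}$ and $\|g\|_{L^{p_-}(w^{p_-/2},\,\rn)}=\|h\|_{L^{p_-}(\rn)}$, this inequality translates into a Nash-type inequality
\begin{equation*}
\|g\|_{L^2(w,\,\rn)}^{4}\le C\,\|\nab g\|_{L^2(w,\,\rn)}^2\,\|g\|_{L^{p_-}(w^{p_-/2},\,\rn)}^2.
\end{equation*}
Applying this with $g=T_s^\rho f$, together with the energy identity $\frac{d}{ds}\|T_s^\rho f\|_{L^2(w,\,\rn)}^2\le-2\Re\mathfrak{a}_\rho(T_s^\rho f,T_s^\rho f)$ and the lower bound from (i), gives a Grönwall-type inequality for $u(s):=\|T_s^\rho f\|_{L^2(w,\,\rn)}^2$ of the form $u'(s)\le -c u(s)^2/K(s)^2+C\rho^2 u(s)$ with $K(s):=\|T_s^\rho f\|_{L^{p_-}(w^{p_-/2},\,\rn)}$; after controlling $K(s)$ on $[0,t]$ by $e^{C\rho^2 s}\|f\|_{L^{p_-}(w^{p_-/2},\,\rn)}$ (via an analogous $L^{p_-}$ energy argument), integration yields
\begin{equation*}
\|T_t^\rho f\|_{L^2(w,\,\rn)}\le Ct^{-1/2}e^{C\rho^2 t}\|f\|_{L^{p_-}(w^{p_-/2},\,\rn)}.
\end{equation*}

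For (iii), given closed $E,F\subset\rn$ and $f$ supported in $E$, take $\phi(x):=\min\{d(x,E),\,d(E,F)\}$, so that $\phi\equiv0$ on $E$ (hence $e^{-\rho\phi}f=f$), $\phi\ge d(E,F)$ on $F$ and $\|\nab\phi\|_\fz\le 1$. Then
\begin{equation*}
\|e^{-tL_w}f\|_{L^2(w,\,F)}\le e^{-\rho d(E,F)}\|T_t^\rho f\|_{L^2(w,\,\rn)}\le Ct^{-1/2}e^{C\rho^2 t-\rho d(E,F)}\|f\|_{L^{p_-}(w^{p_-/2},\,E)},
\end{equation*}
and the choice $\rho\sim d(E,F)/t$ produces the Gaussian factor, giving \eqref{eq off-diagonal}. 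The main technical obstacle I anticipate is the $L^{p_-}(w^{p_-/2},\,\rn)$ quasi-contractivity of the twisted semigroup $\{T_t^\rho\}$ needed to bound $K(s)$ and close the Nash step, since only $L^2(w,\,\rn)$ quasi-contractivity comes automatically from accretivity of $\mathfrak{a}_\rho$; once this and the explicit form of \eqref{eq Sobolev} are secured, the remaining Davies bookkeeping is routine.
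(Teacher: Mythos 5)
Your overall framework — Davies' exponential perturbation plus a weighted Sobolev inequality — is the same as the paper's, but the engine that produces the $t^{-1/2}$ decay is genuinely different, and your version has a gap that you yourself flag. You try to prove the $L^{p_-}(w^{p_-/2},\rn)\to L^2(w,\rn)$ bound directly by a Nash/Gr\"onwall iteration: differentiate $u(s)=\|T_s^\rho f\|^2_{L^2(w,\,\rn)}$, lower-bound the dissipation by the Nash inequality, and integrate. This requires controlling $K(s)=\|T^\rho_s f\|_{L^{p_-}(w^{p_-/2},\,\rn)}$ on $[0,t]$, i.e.\ a quasi-contractivity estimate for the \emph{twisted} semigroup on $L^{p_-}(w^{p_-/2},\rn)$. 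That step is not routine here: $L_w$ is defined only through the $L^2(w,\rn)$ sesquilinear form, the coefficients are merely complex bounded measurable with degenerate ellipticity, and $L_w$ (a fortiori the twisted operator $L_{\nu,\phi}$) is not self-adjoint, so there is no cheap way to pass from $L^2(w,\rn)$ accretivity to $L^{p_-}$ accretivity. For the untwisted, symmetric Laplacian this is classical, but in the present degenerate, non-symmetric, weighted setting the ``analogous $L^{p_-}$ energy argument'' you invoke is precisely what is missing, and I do not see how to supply it without essentially proving the proposition by other means.

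The paper sidesteps this obstacle entirely by flipping the direction and working only in $L^2(w,\,\rn)$. Set $f_t:=e^{-tL_{\nu,\phi}}f$ for $f\in L^2(w,\,\rn)$. The weighted Sobolev inequality \eqref{eq Sobolev} (obtained, as you indicate, from the Muckenhoupt--Wheeden boundedness of $(-\Delta)^{-1/2}$ under $w^{-1}\in A_{2-2/n}(\rn)$) gives $\|f_t\|_{L^{p_+}(w^{p_+/2},\,\rn)}\ls\|\nabla f_t\|_{L^2(w,\,\rn)}\ls[\Re\mathfrak{a}(f_t,f_t)]^{1/2}$. Then Lemma \ref{lem perturbation} compares $\mathfrak{a}$ with $\mathfrak{a}_{\nu,\phi}$, and the key point is Lemma \ref{lem twist semi}: the bound $\|(tL_{\nu,\phi})^ke^{-tL_{\nu,\phi}}\|_{L^2(w,\,\rn)\to L^2(w,\,\rn)}\ls e^{C\nu^2 t}$, applied with $k=0$ and $k=1$, already yields $\Re\mathfrak{a}(f_t,f_t)\ls(\nu^2+t^{-1})e^{2C_1\nu^2 t}\|f\|^2_{L^2(w,\,\rn)}$. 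No Gr\"onwall iteration is needed: the analytic-semigroup smoothing supplies the $t^{-1/2}$ at a single time $t$. This produces the off-diagonal $L^2(w,\rn)\to L^{p_+}(w^{p_+/2},\rn)$ bound for $e^{-tL_w}$, and the desired $L^{p_-}(w^{p_-/2},\rn)\to L^2(w,\rn)$ estimate follows by duality (using $\frac{1}{p_-}+\frac{1}{p_+}=1$ and $\frac{1}{p_-}-\frac12=\frac1n$). If you want to salvage your Nash route you would need to prove the $L^{p_-}(w^{p_-/2},\rn)$ quasi-contractivity of $T^\rho_t$; otherwise, replacing step (ii) with the Sobolev-at-time-$t$-plus-duality argument above closes the gap.
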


The proof of Proposition \ref{pro off-diagonal} relies on an exponential perturbation
method from \cite{Da95} and the boundedness of the Riesz potential in
weighted Lebesgue spaces from \cite{M-W74}.
We first introduce some notions and lemmas.

Let $\mathcal{E}(\rn)$ be the set of all bounded
real-valued functions $\phi\in C^\fz(\rn)$ such that, for all multi-indices
$\az\in (\zz_+)^n$ and $|\az|=1$, $\|\pa^\az \phi\|_{L^\fz(\rn)}\le 1$.
Now, for $\nu\in\rr_+:=(0,\fz)$ and $\phi\in\mathcal{E}(\rn)$,
let
\begin{eqnarray}\label{twist}
L_{\nu,\,\phi}:=e^{\nu\phi}L_we^{-\nu\phi}.
\end{eqnarray}
For all $f$, $g\in \mathcal{H}_0^1(w,\,\rn)$,
the \emph{twist sesquilinear form} $\mathfrak{a}_{\nu,\,\phi}$ is defined by setting
\begin{eqnarray*}
\mathfrak{a}_{\nu,\,\phi}(f,\,g):=\dint_{\rn}\lf[ A(x)\nabla (e^{-\nu \phi}f)(x)\r]
\cdot\nabla(e^{\nu\phi} g)(x)\,dx.
\end{eqnarray*}
Then, by the definition of $L_w$, we know that
\begin{equation}\label{eq sesqui-pertur}
\mathfrak{a}_{\nu,\,\phi}(f,\,g)
=\lf(L_{\nu,\,\phi}(f),\,g\r)_{L^2(w,\,\rn)}:=\int_\rn L_{\nu,\,\phi}(f)(x)\overline{g(x)}w(x)\,dx.
\end{equation}
Let $\{e^{-tL_{\nu,\,\phi}}\}_{t>0}$ be the heat semigroup
generated by $L_{\nu,\,\phi}$.

Notice that the conditions \eqref{degenerate C1} and \eqref{degenerate C2} imply that $L_w$ is of type
$\omega$, where $\omega:=\arctan(\Lambda/\lambda)\in [0,\,\frac{\pi}{2})$;
see \cite{M86} (also \cite[p.\,293]{CR08}) for the details.
Hence, for $z\in\Sigma(\pi/2-\omega)$, where
$$\Sigma(\pi/2-\omega):=\{z\in\cc\setminus\{0\}:\,|\arg z|<\pi/2-\omega\},$$
it holds true that
\begin{eqnarray*}
e^{-zL_w}(f)=\frac{1}{2\pi i}\int_{\bgz} e^{z\xi}
(\xi I+L_w)^{-1}(f)\,d\xi,
\end{eqnarray*}
where
$$\bgz:=\gz^+\cup\gz^-:=\lf\{z\in\cc:\
z=r^{i\tz},\,r\in(0,\,\fz)\r\}\bigcup \lf\{z\in\cc:\
z=r^{- i\tz},\,r\in(0,\,\fz)\r\}$$
for some $\tz\in(\pi/2+|\arg (z)|,\,\pi-\omega)$.
This, together with \eqref{twist}, implies that,
for all $t\in(0,\,\fz)$,
\begin{eqnarray}\label{equation}
e^{-tL_{\nu,\,\phi}}=e^{\nu\phi}e^{-tL_w}e^{-\nu\phi}.
\end{eqnarray}

The following two lemmas are, respectively,
\cite[Lemma 2.4]{ZCJY14} and \cite[Lemma 2.5]{ZCJY14}.

\begin{lem}[\cite{ZCJY14}]\label{lem perturbation}
Let $w\in A_2(\rn)$ and
$L_w$ be the degenerate elliptic operator satisfying
the degenerate elliptic conditions \eqref{degenerate C1}
and \eqref{degenerate C2}. Then there exists a positive constant $C$ such that,
for all $\nu\in\rr_+$, $\phi\in \mathcal{E}(\rn)$ and $f\in \mathcal{H}_0^1(w,\,\rn)$,
\begin{eqnarray}\label{eq perturbation}
\lf|\mathfrak{a}_{\nu,\,\phi}(f,\,f)-\mathfrak{a}(f,\,f)\r|\le \frac{1}{4}
\Re\lf\{\mathfrak{a}(f,\,f)\r\}+C\nu^2\|f\|_{L^2(w,\,\rn)}^2.
\end{eqnarray}
\end{lem}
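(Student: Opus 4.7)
The plan is to compute $\mathfrak{a}_{\nu,\,\phi}(f,f)-\mathfrak{a}(f,f)$ by direct expansion and then control each piece using only the ellipticity conditions \eqref{degenerate C1} and \eqref{degenerate C2} and the built-in bound $|\nabla\phi|\le1$ from $\phi\in\ce(\rn)$. Since $\phi$ is real, the product rule gives $\nabla(e^{-\nu\phi}f)=e^{-\nu\phi}(\nabla f-\nu f\nabla\phi)$ and $\ov{\nabla(e^{\nu\phi}f)}=e^{\nu\phi}(\ov{\nabla f}+\nu\ov{f}\nabla\phi)$; the exponentials then cancel inside the integrand, and expanding the resulting four products identifies exactly three extra pieces beyond $\mathfrak{a}(f,f)$, namely
$$\mathfrak{a}_{\nu,\,\phi}(f,f)-\mathfrak{a}(f,f)=\nu\int_{\rn}\ov{f}\,A\nabla f\cdot\nabla\phi\,dx-\nu\int_{\rn}f\,A\nabla\phi\cdot\ov{\nabla f}\,dx-\nu^2\int_{\rn}|f|^2A\nabla\phi\cdot\nabla\phi\,dx.$$

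The $\nu^2$ term is dealt with at once: by \eqref{degenerate C1} applied to $\xi=\eta=\nabla\phi$, its integrand is at most $\blz w|f|^2$ in modulus, so this term contributes at most $\blz\nu^2\|f\|_{L^2(w,\rn)}^2$. Each $\nu$-linear cross term is treated by the pointwise bound $|A\nabla f\cdot\nabla\phi|\le\blz w|\nabla f|$ coming from \eqref{degenerate C1}, followed by Cauchy--Schwarz in $L^2(w,\rn)$ and the $\ez$-Young inequality, which gives
$$\nu\blz\int_{\rn}w|\nabla f||f|\,dx\le\ez\int_{\rn}w|\nabla f|^2\,dx+\frac{\nu^2\blz^2}{4\ez}\|f\|_{L^2(w,\rn)}^2.$$
The lower bound \eqref{degenerate C2} converts the first summand into $(\ez/\lz)\Re\mathfrak{a}(f,f)$. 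Summing the two cross terms and choosing $\ez=\lz/8$ produces $\frac14\Re\mathfrak{a}(f,f)+(4\blz^2/\lz)\nu^2\|f\|_{L^2(w,\rn)}^2$; adding the earlier bound on the $\nu^2$ term yields \eqref{eq perturbation} with $C=\blz+4\blz^2/\lz$.

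There is no genuine obstacle; the argument is a direct form-perturbation computation of Davies type. The only subtlety worth flagging is that $A$ is complex-valued, so the two $\nu$-linear terms are not manifestly each other's conjugate and must be bounded separately in modulus, while the coercivity \eqref{degenerate C2} only controls $\Re\mathfrak{a}(f,f)$, which is precisely the quantity afforded by the right-hand side of \eqref{eq perturbation}. The assumption $\phi\in\ce(\rn)$ is used solely to make the constant $C$ independent of $\phi$, through the universal bound $|\nabla\phi|\le1$.
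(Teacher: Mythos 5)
Your computation is correct: the expansion of $\mathfrak{a}_{\nu,\,\phi}(f,f)-\mathfrak{a}(f,f)$ into the two $\nu$-linear cross terms and the $\nu^2$ term, the pointwise use of \eqref{degenerate C1} with $|\nabla\phi|\le1$, Cauchy--Schwarz plus Young with $\ez=\lz/8$, and the conversion via \eqref{degenerate C2} all check out, giving $C=\blz+4\blz^2/\lz$. Note that the present paper does not prove this lemma at all — it is quoted verbatim from \cite[Lemma 2.4]{ZCJY14} — so there is nothing to compare against here; your argument is the standard Davies-type form-perturbation computation that the cited source uses, and your remark that the two cross terms must be bounded separately because $A$ is complex and non-Hermitian is exactly the right point of care.
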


\begin{lem}[\cite{ZCJY14}]\label{lem twist semi}
Let $w\in A_2(\rn)$, $k\in \zz_+$ and
$L_w$ be the degenerate elliptic operator satisfying
the degenerate elliptic conditions \eqref{degenerate C1}
and \eqref{degenerate C2}. Then there exist positive constants $C_0$ and $C_1$
such that, for all $\nu\in\rr_+$, $\phi\in\mathcal{E}(\rn)$,
$t\in(0,\,\fz)$ and $f\in L^2(w,\,\rn)$,
\begin{eqnarray}\label{eq twist semi}
\lf\|\lf(tL_{\nu,\,\phi}\r)^ke^{-tL_{\nu,\,\phi}}(f)\r\|_{L^2(w,\,\rn)}
\le C_0 e^{C_1\nu^2t}\|f\|_{L^2(w,\,\rn)}.
\end{eqnarray}
\end{lem}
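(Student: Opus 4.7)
\medskip
\noindent\textbf{Proof plan for Lemma \ref{lem twist semi}.}

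The plan is to first settle the case $k=0$ by a direct energy estimate driven by the form perturbation bound in Lemma \ref{lem perturbation}, and then to derive the case $k\geq 1$ from sectoriality of the twisted operator combined with analytic semigroup theory.

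First I would handle the case $k=0$. Given $f\in L^2(w,\rn)$ set $u(t):=e^{-tL_{\nu,\,\phi}}(f)$ and compute, using the semigroup identity $\frac{d}{dt}u(t)=-L_{\nu,\,\phi}u(t)$ and \eqref{eq sesqui-pertur},
\begin{equation*}
\frac{d}{dt}\|u(t)\|_{L^2(w,\rn)}^2
=-2\Re\mathfrak{a}_{\nu,\,\phi}(u(t),\,u(t)).
\end{equation*}
Applying Lemma \ref{lem perturbation} to $u(t)$ yields
\begin{equation*}
\Re\mathfrak{a}_{\nu,\,\phi}(u(t),\,u(t))
\geq \Re\mathfrak{a}(u(t),\,u(t))-\tfrac14\Re\mathfrak{a}(u(t),\,u(t))
-C\nu^2\|u(t)\|_{L^2(w,\rn)}^2,
\end{equation*}
and the coercivity \eqref{degenerate C2} of $A$ gives $\Re\mathfrak{a}(u(t),u(t))\geq 0$, so the right-hand side is $\geq -C\nu^2\|u(t)\|_{L^2(w,\rn)}^2$. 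Thus
\begin{equation*}
\frac{d}{dt}\|u(t)\|_{L^2(w,\rn)}^2\leq 2C\nu^2\|u(t)\|_{L^2(w,\rn)}^2,
\end{equation*}
and Gronwall's inequality produces the desired bound $\|u(t)\|_{L^2(w,\rn)}\leq e^{C\nu^2 t}\|f\|_{L^2(w,\rn)}$, which is \eqref{eq twist semi} for $k=0$.

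For the case $k\geq 1$, I would show that the shifted form
$\mathfrak{a}_{\nu,\,\phi}(f,\,g)+C\nu^2(f,\,g)_{L^2(w,\rn)}$
remains sectorial with an angle $\omega'\in[\omega,\pi/2)$ independent of the parameters $\nu>0$ and $\phi\in\mathcal{E}(\rn)$. The real part is controlled from below by $\tfrac34\Re\mathfrak{a}(f,f)\geq 0$ via Lemma \ref{lem perturbation} as above, while the imaginary part of the perturbation $\mathfrak{a}_{\nu,\,\phi}-\mathfrak{a}$ is dominated by the same right-hand side of \eqref{eq perturbation}, so after the $C\nu^2$ shift the ratio $|\Im(\cdot)|/\Re(\cdot)$ stays bounded uniformly in $\nu$ and $\phi$. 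By Kato's representation theorem, the associated operator $L_{\nu,\,\phi}+C\nu^2 I$ is m-sectorial of type $\omega'<\pi/2$ on $L^2(w,\rn)$ and therefore generates a bounded analytic semigroup on a sector $\Sigma(\pi/2-\omega')$ with norm uniformly bounded in $\nu,\phi$.

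Standard analytic semigroup theory (Cauchy's formula on a circle of radius $t\sin(\pi/2-\omega')/2$ around $t$) then yields, for each $k\in\nn$,
\begin{equation*}
\bigl\|\bigl(t(L_{\nu,\,\phi}+C\nu^2 I)\bigr)^ke^{-t(L_{\nu,\,\phi}+C\nu^2 I)}\bigr\|_{L^2(w,\rn)\to L^2(w,\rn)}\leq C_k
\end{equation*}
uniformly in $\nu$ and $\phi$. Multiplying by $e^{tC\nu^2}$ restores the twisted semigroup and, expanding $(tL_{\nu,\,\phi})^k=(t(L_{\nu,\,\phi}+C\nu^2 I)-tC\nu^2 I)^k$ via the binomial formula and absorbing the polynomial-in-$t\nu^2$ factors into an enlarged exponential, gives \eqref{eq twist semi} with updated constants $C_0,C_1$. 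The main obstacle is precisely the uniform sectoriality step: one must extract from Lemma \ref{lem perturbation} that both $\Re$ and $|\Im|$ of $\mathfrak{a}_{\nu,\,\phi}-\mathfrak{a}$ are dominated by the same quantity $\tfrac14\Re\mathfrak{a}(f,f)+C\nu^2\|f\|_{L^2(w,\rn)}^2$ (which is immediate since \eqref{eq perturbation} is stated for the complex modulus), so that the sector angle of the shifted form does not deteriorate as $\nu\to\infty$.
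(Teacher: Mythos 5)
This lemma is not proved in the paper: it is imported verbatim as \cite[Lemma 2.5]{ZCJY14}, so there is no in-paper argument to compare against. Your proposal is the standard Davies-type perturbation argument and is essentially correct; it is almost certainly the argument behind the cited result. Two small points deserve attention. First, for $k=0$ the identity $\frac{d}{dt}\|u(t)\|_{L^2(w,\,\rn)}^2=-2\Re\,\mathfrak{a}_{\nu,\,\phi}(u(t),u(t))$ presupposes that $u(t)\in D(L_{\nu,\,\phi})$ and that $t\mapsto u(t)$ is differentiable; this is cleanest to justify \emph{after} your sectoriality step (or by first taking $f\in D(L_{\nu,\,\phi})$ and concluding by density), so logically the $k=0$ case is better subsumed into the $k\geq1$ machinery, from which it follows at once. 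Second, the shift by exactly $C\nu^2$ does not quite give a uniform sector: from \eqref{eq perturbation} and \eqref{degenerate C1}--\eqref{degenerate C2} one gets
$\Re\{\mathfrak{a}_{\nu,\,\phi}(f,f)+C\nu^2\|f\|_{L^2(w,\,\rn)}^2\}\geq\frac34\Re\{\mathfrak{a}(f,f)\}$
while
$|\Im\{\mathfrak{a}_{\nu,\,\phi}(f,f)\}|\leq(\frac{\Lambda}{\lambda}+\frac14)\Re\{\mathfrak{a}(f,f)\}+C\nu^2\|f\|_{L^2(w,\,\rn)}^2$,
and the last term is not dominated by $\frac34\Re\{\mathfrak{a}(f,f)\}$ alone. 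Shifting instead by $2C\nu^2$ puts an extra $C\nu^2\|f\|_{L^2(w,\,\rn)}^2$ into the lower bound for the real part and makes the ratio $|\Im|/\Re$ uniformly bounded, as you intend; the subsequent binomial expansion and absorption of the polynomial factors $(C\nu^2t)^{k-j}$ into $e^{C_1\nu^2t}$ then go through exactly as you describe.
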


Let $1<p<q<\fz$. Recall also the following definition of $A_{p,\,q}(\rn)$
weights from \cite{M-W74}.
A nonnegative and locally integrable function $w$ is said to belong to the
\emph{weight class} $A_{p,\,q}(\rn)$, if
\begin{eqnarray*}
\lf[w\r]_{A_{p,\,q}(\rn)}:=\dsup_{B\subset \rn}\lf\{\frac{1}{|B|}\dint_{B}
\lf[w(x)\r]^{q}\,dx\r\}^{\frac{1}{q}}\lf\{\frac{1}{|B|}\dint_{B}
\lf[w(x)\r]^{-p'}\,dx\r\}^{\frac{1}{p'}}<\fz,
\end{eqnarray*}
where the supremum is taken over all open balls $B\subset \rn$ and
$p':=\frac{p}{p-1}$ denotes the \emph{conjugate exponent} of $p$.

We are now in a position to prove Proposition \ref{pro off-diagonal}.
\begin{proof}[Proof of Proposition \ref{pro off-diagonal}]
Let $p_+:=\frac{2n}{n-2}$. It is easy to see that
$\frac{1}{p_-}+\frac{1}{p_+}=1$ and
\begin{eqnarray}\label{eq ppm}
\frac12-\frac1{p_+}=\frac1{p_-}-\frac12=\frac1n.
\end{eqnarray}
It is well known that $w\in A_{p,\,q}(\rn)$ if and only if $w^{-p'}\in A_{1+\frac{p'}{q}}(\rn)$
(see \cite[pp.\,266-267]{M-W74}),
where $1<p<q<\fz$.
Hence, $w^{-1}\in A_{2-\frac{2}{n}}(\rn)$ is equivalent to
$w^{1/2}\in A_{2,\,p_+}(\rn)$.
Then, by \cite[Theorem 4]{M-W74}, we know that
the Riesz potential
$$(-\Delta)^{-1/2}(f)(x):=\frac{1}{\gz(1)}\int_\rn \frac{f(y)}{|x-y|^{n-1}}\,dy,$$
where $x\in\rn$ and $\gz(1)=2\pi^{\frac n2}\Gamma(\frac12)/\Gamma(\frac{n-1}{2})$, is bounded from
$L^2(w,\,\rn)$ to $L^{p_+}(w^{\frac{p_+}{2}},\,\rn)$. Therefore,
for all $g\in L^2(w,\,\rn)$,
\begin{eqnarray*}
\lf\|\lf(-\bdz\r)^{-\frac{1}{2}}(g)\r\|_{L^{p_+}(w^{\frac{p_+}{2}},\,\rn)}\ls \lf\|g\r\|_{L^2(w,\,\rn)}.
\end{eqnarray*}
Moreover, since, for any $u\in C_c^\fz(\rn)$ and $x\in\rn$, it holds true that
\begin{eqnarray}\label{eq element-ineq}
|u(x)|\ls \int_{\rn}\frac{|\nabla u(y)|}{|x-y|^{n-1}}\,dy \ls \lf(-\bdz\r)^{-\frac{1}{2}}(|\nabla u|)(x)
\end{eqnarray}
(see \cite[p.\,125]{St70}),
this, combined with a density argument, implies that,
 for all $h\in \mathcal{H}_0^1(w,\,\rn)$,
\begin{eqnarray}\label{eq Sobolev}
\lf\|h\r\|_{L^{p_+}(w^{\frac{p_+}{2}},\,\rn)}\ls \lf\|\nabla h\r\|_{L^2(w,\,\rn)}.
\end{eqnarray}

Now, for all $t,\,\nu\in(0,\,\fz)$, $\phi\in\mathcal{E}(\rn)$ and $f\in L^2(w,\,\rn)$,
let $f_t:=e^{-tL_{\nu,\,\phi}}(f)$.
By \eqref{eq Sobolev} and the degenerate elliptic condition
\eqref{degenerate C2}, we obtain
\begin{eqnarray}\label{eq 2.x}
\lf\|f_t\r\|_{L^{p_+}(w^{\frac{p_+}{2}},\,\rn)}&&\ls \lf\|\nabla(f_t)\r\|_{L^2(w,\,\rn)}
\ls \lf[\Re \lf\{\mathfrak{a}(f_t,\,f_t)\r\}\r]^{1/2}.
\end{eqnarray}
From Lemma \ref{lem perturbation}, it follows that
\begin{eqnarray*}
\Re \lf\{\mathfrak{a}(f_t,\,f_t)\r\}
&&\le \lf|\Re \lf\{\mathfrak{a}(f_t,\,f_t)\r\}-\Re
\lf\{\mathfrak{a}_{\nu,\,\phi}(f_t,\,f_t)\r\}\r|+\lf|\Re
\lf\{\mathfrak{a}_{\nu,\,\phi}(f_t,\,f_t)\r\}\r|\\
&&\le\frac{1}{4}\Re \lf\{\mathfrak{a}(f_t,\,f_t)\r\}
+ C\nu^2\|f_t\|^2_{L^2(w,\,\rn)}
+\lf|\mathfrak{a}_{\nu,\,\phi}(f_t,\,f_t)\r|,
\end{eqnarray*}
where the positive constant $C$ is as in Lemma \ref{lem perturbation}.
This, together with \eqref{eq 2.x}, \eqref{eq sesqui-pertur},
the H\"older inequality and Lemma \ref{lem twist semi},
implies that there exists a positive constant $M_0$ such that
\begin{eqnarray}\label{eq 2.1}
\lf\|f_t\r\|_{L^{p_+}(w^{\frac{p_+}{2}},\,\rn)}&&\ls
\lf[\nu^2\lf\|f_t\r\|_{L^2(w,\,\rn)}^2+\lf|
\mathfrak{a}_{\nu,\,\phi}(f_t,\,f_t)\r|\r]^{1/2}\\
&&\nonumber\ls \lf[\nu^2\lf\|f_t\r\|_{L^2(w,\,\rn)}^2
+\lf\|L_{\nu,\,\phi}(f_t)\r\|_{L^2(w,\,\rn)}\lf\|f_t\r\|_{L^2(w,\,\rn)}\r]^{1/2}\\
&&\nonumber\ls \lf[\nu^2 e^{2 C_1\nu^2t}\lf\|f\r\|_{L^2(w,\,\rn)}^2+
\frac{1}{t}e^{2C_1\nu^2t} \lf\|f\r\|_{L^2(w,\,\rn)}^2\r]^{1/2}\\
&&\nonumber\ls t^{-1/2}e^{M_0 \nu^2 t}\|f\|_{L^2(w,\,\rn)},
\end{eqnarray}
where the positive constant $C_1$ is as in Lemma \ref{lem twist semi} and
the implicit positive constants are independent of $t$, $\nu$ and $f$.

Take $\phi\in{\mathcal E}(\rn)$ satisfying $\phi|_{E}\ge 0$
and $\phi|_{F}\le -\frac{d(E,\,F)}{1+\epsilon}$, where $\epsilon$ is some
suitable positive constant.
By  this, \eqref{equation} and
\eqref{eq 2.1}, we see that, for all $g\in L^2(w,\,F)$ supported in $F$,
\begin{eqnarray}\label{2.x1}
\quad\lf\|e^{-tL_w}\lf(g\r)\r\|_{L^{p_+}(w^{\frac{p_+}{2}},\,E)}=&&
 \lf\|e^{-\nu \phi}e^{\nu \phi} e^{-tL_w}\lf(e^{-\nu \phi}e^{\nu \phi}  g\r)\r\|_{L^{p_+}(w^{\frac{p_+}{2}},\,E)}\\
&&\le\lf\|e^{-tL_{\nu,\,\phi}}\lf(e^{\nu\phi}g\r)\r\|_{L^{p_+}(w^\frac{p_+}{2},\,E)}\noz\\
&&\ls t^{-1/2}e^{M_0 \nu^2 t}
\lf\|e^{\nu \phi} g\r\|_{L^2(w,\,F)}\noz\\
&&\ls t^{-1/2}
e^{M_0\nu^2 t}e^{-\nu\frac{d(E,\,F)}{1+\epsilon}}\|g\|_{L^2(w,\,F)},\noz
\end{eqnarray}
where the positive constant $M_0$ is as in \eqref{eq 2.1}.
This, combined with the choice that $\nu:=\frac{d(E,\,F)}{\wz{C_0}t}$
with $\wz{C_0}>(1+\epsilon)M_0$, implies that
there exists a positive constant $K_0$ such that,
for all $g\in L^2(w,\,F)$ supported in $F$,
\begin{eqnarray}\label{eq 2.2}
\lf\|e^{-tL_w}\lf(g\r)\r\|_{L^{p_+}(w^{\frac{p_+}{2}},\,E)}
&&\ls t^{-1/2} e^{-[\frac{1}{\wz{C_0}}(\frac{1}{1+\epsilon}-\frac{M_0}{\wz{C_0}})]
\frac{[d(E,\,F)]^2}{t}}\|g\|_{L^2(w,\,F)}\\
&&\sim t^{-1/2} e^{-K_0\frac{[d(E,\,F)]^2}{t}}\|g\|_{L^2(w,\,F)}.\noz
\end{eqnarray}

Using duality, the H\"older inequality,
\eqref{eq 2.2} and \eqref{eq ppm}, we conclude that,
for all $f\in L^{p_-}(w^{\frac{p_-}{2}},\,E)$ supported in $E$
and $g\in L^2(w,\,F)$ supported in $F$,
\begin{eqnarray*}
&&\lf|\dint_{F}e^{-tL_w^*}(f)(x)\overline{g(x)}w(x)\,dx\r|\\
&&\hs=\lf|\dint_{E}f(x) \overline{e^{-tL_w}(g)(x)}[w(x)]^{\frac{p_+}{2p_+}+\frac{1}{2}}\,dx\r|\\
&&\hs\le \lf\{\dint_{E}\lf|e^{-tL_w}(g)(x)\r|^{p_+}\lf[w(x)\r]^{\frac{p_+}{2}}\,dx\r\}^{1/p_+}
\lf\{\dint_{E}\lf|f(x)\r|^{p_-}\lf[w(x)\r]^{\frac{p_-}{2}}\,dx\r\}^{1/p_-}\\
&&\hs\ls t^{-\frac{n}{2}(\frac{1}{p_-}-\frac{1}{2})}e^{-K_0\frac{[d(E,\,F)]^2}{t}}\|g\|_{L^2(w,\,F)}
\|f\|_{L^{p_-}(w^{\frac{p_-}{2}},\,E)},
\end{eqnarray*}
where the positive constant $K_0$ is as in \eqref{eq 2.2}.
By this and the dual representation of the $L^2(w,\,F)$ norm of $e^{-tL_w^*}(f)$,
we see that
\begin{eqnarray*}
\lf\|e^{-tL_w^*}(f)\r\|_{L^2(w,\,F)}
\ls t^{-\frac{n}{2}(\frac{1}{p_-}-\frac{1}{2})}e^{-K_0\frac{[d(E,\,F)]^2}{t}}
\|f\|_{L^{p_-}(w^{\frac{p_-}{2}},\,E)}.
\end{eqnarray*}
Observing the above estimates also hold true via replacing $e^{-tL_w^\ast}$ by $e^{-tL_w}$,
we then complete the proof of Proposition \ref{pro off-diagonal}.
\end{proof}

\subsection{Weighted tent spaces}\label{s2.2}
\hskip\parindent
For all measurable functions $f$ on $\rr^{n+1}_+$ and $x\in\rn$,
let
\begin{equation*}
A(f)(x):=\lf[\iint_{\bgz(x)}|f(y,t)|^2\,\frac{dy\,dt}{t^{n+1}}\r]^{1/2},
\end{equation*}
where $\bgz(x)$ is as in \eqref{cone} with $\az=1$.
For all $p\in(0,\fz)$ and $w\in A_\fz(\rn)$, the \emph{weighted tent space}
$T^p_w(\rr_+^{n+1})$ is defined to be the space of all measurable functions $f$
such that $\|f\|_{T^p_w(\rr_+^{n+1})}:=\|A(f)\|_{L^p(w,\,\rn)}<\fz$.
When $w\equiv 1$, the space $T^p_w(\rr_+^{n+1})$ was studied in \cite{CMS85}
and is simply denoted by $T^p(\rr_+^{n+1})$.

For any open set $O\st\rn$, the tent over $O$ is defined by
$$\wh O:=\{(x,t)\in\rnn:\ \dist(x,\, O^\com)\geq t\}.$$

Let $p\in (0,1]$ and $w\in A_\fz(\rn)$. A measurable function $a$ on $\rr_+^{n+1}$
is called a \emph{$(w,\,p,\,\fz)$-atom} if there exists a ball $B\st\rn$ such that

(i) $\supp a\subset \widehat{B}$;

(ii) for all $q\in (1,\fz)$,
\begin{equation*}
\|a\|_{T^p(\rr_+^{n+1})}:=\lf\{\int_\rn\lf[\iint_{\bgz(x)}|a(y,t)|^2\dydt\r]^\frac{q}{2}\,dx\r\}^{\frac{1}{q}}
\le |B|^{\frac{1}{q}}[w(B)]^{-\frac{1}{p}}.
\end{equation*}

\begin{rem}\label{rem note}
(i) Every $\asize$-atom $a$ belongs to $T^p_w(\rr_+^{n+1})$ and $\|a\|_{T_w^p(\rr_+^{n+1})}\le C$,
where the positive constant $C$ is independent of $a$ (see \cite[p.\,7]{BCKYY13-1}).

(ii) If $\supp f\st \wh{B}$ for some ball $B\st\rn$, then $\supp A(f)\st B$.
\end{rem}

The following lemma is needed in the proof of Theorem \ref{pro second}.
\begin{lem}\label{lem element1}
Let $a$ be a $\asize$-atom, with $w\in A_\fz(\rn)$ and $p\in(0,1]$, and $\supp a\st \wh{B}$.
Then, for any $p_1\in (1,\fz)$, there exists a positive constant $C$, independent of $a$,
such that
\begin{equation*}
\|a\|_{T^{p_1}_w(\rnn)}\le C[w(B)]^{\frac{1}{p_1}-\frac{1}{p}}.
\end{equation*}
\end{lem}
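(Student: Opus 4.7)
The plan is to reduce the weighted tent-space norm to the unweighted one by a single application of H\"older's inequality, exploiting the fact that an atom has compact "tent" support. By Remark~\ref{rem note}(ii), $\supp A(a)\subset B$, so
$$\|a\|_{T^{p_1}_w(\rnn)}^{p_1}=\int_B A(a)(x)^{p_1}\,w(x)\,dx.$$
Since $w\in A_\fz(\rn)=\bigcup_{r\in(1,\fz]}RH_r(\rn)$ by Lemma~\ref{lem Ap-1}(ii), I would fix some $r\in(1,\fz)$ with $w\in RH_r(\rn)$ and then choose $q\in(1,\fz)$ large enough so that $\frac{q}{q-p_1}\le r$ (for instance, $q\ge\frac{p_1 r}{r-1}$).

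With such a $q$, H\"older's inequality with exponents $q/p_1$ and $q/(q-p_1)$ gives
$$\int_B A(a)(x)^{p_1}w(x)\,dx\le\lf(\int_B A(a)(x)^q\,dx\r)^{p_1/q}\lf(\int_B [w(x)]^{\frac{q}{q-p_1}}\,dx\r)^{(q-p_1)/q}.$$
For the first factor I would invoke the $(w,p,\fz)$-atom condition (ii), which gives
$$\lf(\int_B A(a)(x)^q\,dx\r)^{p_1/q}\le\lf(|B|\,[w(B)]^{-q/p}\r)^{p_1/q}=|B|^{p_1/q}\,[w(B)]^{-p_1/p}.$$
For the second factor, Jensen's inequality (since $\frac{q}{q-p_1}\le r$) combined with the reverse H\"older inequality yields
$$\lf(\frac{1}{|B|}\int_B [w(x)]^{\frac{q}{q-p_1}}\,dx\r)^{(q-p_1)/q}\le\lf(\frac{1}{|B|}\int_B [w(x)]^r\,dx\r)^{1/r}\le C\,\frac{w(B)}{|B|},$$
so that
$$\lf(\int_B [w(x)]^{\frac{q}{q-p_1}}\,dx\r)^{(q-p_1)/q}\le C\,w(B)\,|B|^{-p_1/q}.$$

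Multiplying these two estimates, the powers of $|B|$ cancel and I obtain
$$\|a\|_{T^{p_1}_w(\rnn)}^{p_1}\le C\,[w(B)]^{-p_1/p}\,w(B)=C\,[w(B)]^{1-p_1/p},$$
which on taking $p_1$-th roots gives the desired bound. No serious obstacle arises; the only delicate point is the joint choice of the two exponents so that $q$ is large enough for the reverse H\"older step to apply while still making the atomic bound $|B|^{1/q}[w(B)]^{-1/p}$ usable, and I have made this choice explicit above.
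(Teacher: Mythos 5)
Your proof is correct and follows essentially the same route as the paper's: reduce to $\int_B A(a)^{p_1}w\,dx$ via Remark~\ref{rem note}(ii), split by H\"older so that the unweighted $T^q$-atom bound handles one factor and the reverse H\"older inequality handles the weight factor. The only cosmetic difference is that the paper picks the H\"older exponent on the weight to be exactly the reverse H\"older exponent (i.e.\ it takes $q=rp_1/(r-1)$ directly, matching $q/(q-p_1)=r$), so no intermediate Jensen step is needed; your version allows $q/(q-p_1)\le r$ and inserts a power-mean comparison, which is harmless but redundant.
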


\begin{proof}
Since $w\in A_\fz(\rn)$, by Lemma \ref{lem Ap-1}(ii), we know that
there exists some $r\in (1,\fz)$ such that $w\in RH_{r'}(\rn)$,
where $1/r+1/r'=1$.
From this, Remark \ref{rem note}(ii) and the H\"{o}lder inequality, it follows that
\begin{eqnarray*}
\|a\|_{T^{p_1}_w(\rnn)}
&&=\lf\{\int_\rn\lf[\iint_{\bgz(x)}|a(y,t)|^2\dydt\r]^\frac{p_1}{2} w(x)\,dx\r\}^{\frac{1}{p_1}}\\
&&\le\lf\{\int_B\lf[\iint_{\bgz(x)}|a(y,t)|^2\dydt\r]^\frac{rp_1}{2}\,dx\r\}^{\frac{1}{rp_1}}
\lf\{\int_B [w(x)]^{r'}\,dx\r\}^{\frac{1}{r'p_1}}\\
&&\ls |B|^{\frac{1}{rp_1}}[w(B)]^{-\frac{1}{p}}|B|^{\frac{1}{r'p_1}-\frac{1}{p_1}}
[w(B)]^{\frac{1}{p_1}}\ls [w(B)]^{\frac{1}{p_1}-\frac{1}{p}},
\end{eqnarray*}
which completes the proof of Lemma \ref{lem element1}.
\end{proof}

An important result concerning weighted tent spaces is that each function
in $T^p_w(\rnn)$ has an atomic decomposition. More precisely, we have the following
result, which is a slight variant of \cite[Theorem 2.6]{BCKYY13-1}.
\begin{lem}[\cite{BCKYY13-1}]\label{lem tent-decompositon}
Let $p\in (0,1]$, $w\in A_\fz(\rn)$ and $f\in T^p_w(\rnn)$.
Then there exist a sequence of $\asize$-atoms, $\{a_j\}_{j\in\nn}$,
and a sequence of numbers, $\{\lz_j\}_{j\in\nn}\st\cc$, such that
\begin{equation}\label{eq 2.0}
f=\sum_{j\in\nn}\lz_j a_j,
\end{equation}
where the series converges in $T^p_w(\rnn)$. Moreover, there exist positive constants
$\wz C$ and $C$, independent of $f$, such that
\begin{equation*}
\wz C\|f\|_{T^p_w(\rnn)}\le \lf\{\sum_{j\in\nn}|\lz_j|^p\r\}^{1/p}\le C\|f\|_{T^p_w(\rnn)}.
\end{equation*}
Furthermore, if $f\in T^p_w(\rnn)\cap T^2_w(\rnn)$, then the series in \eqref{eq 2.0}
converges in both $T^p_w(\rnn)$ and $T^2_w(\rnn)$.
\end{lem}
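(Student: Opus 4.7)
\medskip

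The plan is to adapt the classical Coifman--Meyer--Stein atomic decomposition for tent spaces to the weighted setting, using the level sets of $A(f)$ together with a Whitney decomposition. The weight $w\in A_\fz(\rn)$ enters only through the doubling property of $w(x)\,dx$ and through the normalization of the atoms; the cones and tents themselves are unweighted objects, so most of the geometric machinery from \cite{CMS85} transfers verbatim.

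\emph{Construction.} For each $k\in\zz$, set $O_k:=\{x\in\rn:\ A(f)(x)>2^k\}$. Since $\|A(f)\|_{L^p(w,\,\rn)}<\fz$, each $O_k$ is open with $w(O_k)<\fz$, and in particular $O_k\neq\rn$. Let $O_k^*:=\{x\in\rn:\ \cm(\chi_{O_k})(x)>\frac12\}$, where $\cm$ is the Hardy--Littlewood maximal operator; then $O_k\st O_k^*$, $O_k^*$ is open, and $w(O_k^*)\ls w(O_k)$ because $w\in A_\fz(\rn)\st A_q(\rn)$ for some $q\in[1,\fz)$, so $\cm$ is bounded on $L^q(w,\,\rn)$. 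Apply the Whitney decomposition to $O_k^*$ to obtain a family of cubes $\{Q_{k,j}\}_{j}$ with bounded overlap and diameters comparable to $\dist(Q_{k,j},(O_k^*)^\com)$. Let $B_{k,j}$ be the ball containing $Q_{k,j}$ with the same center and radius comparable to $\mathrm{diam}(Q_{k,j})$. Define the slices
\begin{equation*}
T_{k,j}:=\lf(\wh{Q_{k,j}}\cap \wh{O_k^*}\r)\setminus \wh{O_{k+1}^*},
\end{equation*}
so that the sets $\{T_{k,j}\}_{k,j}$ are pairwise disjoint and, since $\wh{O_k^*}\nearrow \rnn$ away from a null set, they cover the support of $f$ up to measure zero. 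Set
\begin{equation*}
\lz_{k,j}:=C_0\,2^k\,[w(B_{k,j})]^{1/p}\quad\text{and}\quad a_{k,j}:=\lz_{k,j}^{-1}f\chi_{T_{k,j}},
\end{equation*}
with $C_0$ a constant to be chosen. Then $f=\sum_{k,j}\lz_{k,j}a_{k,j}$ pointwise a.e.\ on $\rnn$.

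\emph{Atomic property.} Each $T_{k,j}$ is contained in $\wh{B_{k,j}'}$ for a fixed dilate $B_{k,j}'$ of $B_{k,j}$, giving the support condition. For the size condition, fix $q\in(1,\fz)$ and $q':=q/(q-1)$. By duality for the unweighted tent spaces,
\begin{equation*}
\|a_{k,j}\|_{T^q(\rnn)}=\sup\lf|\iint_{\rnn}a_{k,j}(y,t)\overline{h(y,t)}\,\frac{dy\,dt}{t}\r|,
\end{equation*}
where the supremum is over $h\in T^{q'}(\rnn)$ with $\|h\|_{T^{q'}(\rnn)}\le 1$. Since $T_{k,j}\cap \wh{O_{k+1}^*}=\emptyset$, any point $(y,t)\in T_{k,j}$ satisfies $B(y,t)\cap (O_{k+1}^*)^\com\neq\emptyset$. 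A Fubini/Carleson-style argument (as in \cite{CMS85}) transfers the duality pairing onto $(O_{k+1}^*)^\com\cap B_{k,j}'$, on which $A(f)(x)\le 2^{k+1}$. This yields
\begin{equation*}
\|a_{k,j}\|_{T^q(\rnn)}\ls \lz_{k,j}^{-1}\,2^{k+1}\,|B_{k,j}'|^{1/q}\ls |B_{k,j}|^{1/q}\,[w(B_{k,j})]^{-1/p},
\end{equation*}
once $C_0$ is chosen large enough; this is exactly condition (ii) in the definition of a $\asize$-atom.

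\emph{Coefficient bound and convergence.} For the coefficient estimate,
\begin{equation*}
\sum_{k,j}|\lz_{k,j}|^p\ls \sum_{k\in\zz}2^{kp}\sum_{j}w(B_{k,j})\ls \sum_{k\in\zz}2^{kp}w(O_k^*)\ls \sum_{k\in\zz}2^{kp}w(O_k)\sim \|A(f)\|_{L^p(w,\,\rn)}^p,
\end{equation*}
where we used the bounded overlap of $\{B_{k,j}\}_j$ together with the doubling of $w$ to pass from $B_{k,j}$ to $Q_{k,j}$ and on to $O_k^*$. Convergence of the partial sums $\sum_{|k|\le N,\,j\le N}\lz_{k,j}a_{k,j}$ to $f$ in $T^p_w(\rnn)$ follows from Remark \ref{rem note}(i) and the $p$-triangle inequality applied to the tail of $\sum|\lz_{k,j}|^p$. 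When additionally $f\in T^2_w(\rnn)$, convergence in $T^2_w(\rnn)$ follows from dominated convergence applied to $A(f-\text{partial sum})$, since the slices $T_{k,j}$ are disjoint and exhaust $\supp f$. The lower bound $\wz C\|f\|_{T^p_w(\rnn)}\le (\sum|\lz_{k,j}|^p)^{1/p}$ is automatic from the upper bound and the fact that the norm is controlled by any atomic decomposition via Remark \ref{rem note}(i).

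\emph{Main obstacle.} The delicate step is the Carleson-type duality argument establishing the unweighted $T^q$ bound for $a_{k,j}$: one must carefully track that $T_{k,j}\setminus \wh{O_{k+1}^*}$ has enough of its ``shadow'' lying inside $(O_{k+1}^*)^\com$, which requires the enlargement from $O_k$ to $O_k^*$ so that Whitney cubes of $O_k^*$ have comparable mass in $(O_k)^\com$. This geometric lemma, which is independent of the weight, is the technical heart of the proof; the weighted aspects are confined to the normalization $\lz_{k,j}$ and to the bounded overlap/doubling arguments, both of which are routine given $w\in A_\fz(\rn)$.
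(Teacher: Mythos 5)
Your proof takes a genuinely different route from the paper's. The paper does not reprove the atomic decomposition at all: existence of the decomposition and the upper coefficient bound are quoted from \cite[Theorem 2.6]{BCKYY13-1}, and the proof supplied in the paper only adds the two missing pieces, namely the lower bound $\wz C\|f\|_{T^p_w(\rnn)}\le\{\sum_j|\lz_j|^p\}^{1/p}$ (via Remark \ref{rem note}(i), the $p$-triangle inequality and a Cauchy-sequence argument) and the simultaneous $T^2_w(\rnn)$-convergence when $f\in T^p_w(\rnn)\cap T^2_w(\rnn)$ (by reference to the proof of \cite[Proposition 3.1]{JY10}). You instead rebuild the entire Coifman--Meyer--Stein construction in the weighted setting. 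That is a legitimate, more self-contained approach: as you say, the weight enters only through the normalization $\lz_{k,j}=C_02^k[w(B_{k,j})]^{1/p}$, the count $\sum_jw(B_{k,j})\ls w(O_k^*)\ls w(O_k)$ (doubling, bounded overlap, $L^q(w,\rn)$-boundedness of the maximal operator), while the geometry is unweighted; and your treatment of the lower bound and of the $T^2_w(\rnn)$-convergence coincides with the paper's.

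There is, however, one concrete defect in your construction. You define the slices as $T_{k,j}=(\wh{Q_{k,j}}\cap\wh{O_k^*})\setminus\wh{O_{k+1}^*}$, using the \emph{tent} over the Whitney cube $Q_{k,j}$. But $\bigcup_j\wh{Q_{k,j}}$ is a proper subset of $\wh{O_k^*}$ of positive measure: a point $(y,t)$ with $y$ near the common boundary of two Whitney cubes and $\dist(y,Q_{k,j}^\com)<t\le\dist(y,(O_k^*)^\com)$ lies in $\wh{O_k^*}$ but in no $\wh{Q_{k,j}}$. Hence your slices do not cover $\supp f$ and the identity $f=\sum_{k,j}\lz_{k,j}a_{k,j}$ fails as written. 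The standard fix is to slice with vertical cylinders, $T_{k,j}:=(Q_{k,j}\times(0,\fz))\cap\wh{O_k^*}\setminus\wh{O_{k+1}^*}$; the Whitney property $\mathrm{diam}(Q_{k,j})\sim\dist(Q_{k,j},(O_k^*)^\com)$ still confines this set to the tent over a fixed dilate of $B_{k,j}$. Two smaller imprecisions: $\wh{O_k^*}$ need not increase to $\rnn$ as $k\to-\fz$ (take $A(f)$ compactly supported); what one actually proves is that $f=0$ almost everywhere off $\bigcup_k\wh{O_k^*}$. And with the threshold $\frac12$ in $O_k^*=\{\cm(\chi_{O_k})>\frac12\}$, the lower bound $|B(y,t)\cap(O_{k+1})^\com|\gs t^n$ needed in your Carleson/Fubini step only comes out after enlarging the aperture of the cones (or after lowering the threshold below $2^{-n}$), so the change-of-aperture comparison for tent spaces must be invoked at that point; this is in \cite{CMS85} and \cite{BCKYY13-1}, but it is precisely the ``technical heart'' you identify and should not be left implicit.
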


\begin{proof}
By \cite[Theorem 2.6]{BCKYY13-1}, we only need to show that
$\|f\|_{T^p_w(\rnn)}\ls \{\sum_{j\in\nn}|\lz_j|^p\}^{1/p}$ and the last conclusion
of this lemma, concerning the $T^2_w(\rnn)$ convergence of the series in \eqref{eq 2.0}.
For all $N\in\nn$, let
$$S_N:=\sum_{j=1}^N \lz_j a_j.$$
From Remark \ref{rem note}(i), it follows that $\{S_N\}_{N\in\nn}$
is a Cauchy sequence in $T^p_w(\rnn)$ and
$\|S_N\|_{T^p_w(\rnn)}\ls \{\sum_{j\in\nn}|\lz_j|^p\}^{1/p}$.
Since $S_N$ converges to $f$ in $T^p_w(\rnn)$ as $N\to\infty$, we find that
$\|f\|_{T^p_w(\rnn)}\ls \{\sum_{j\in\nn}|\lz_j|^p\}^{1/p}$.

Similar to the proof of \cite[Proposition 3.1]{JY10} (see also
the proof of \cite[Proposition 3.32]{HMM11}),
we further conclude that, if $f\in T^p_w(\rnn)\cap T^2_w(\rnn)$, then the series in \eqref{eq 2.0}
converges in both $T^p_w(\rnn)$ and $T^2_w(\rnn)$, which completes the proof of
Lemma \ref{lem tent-decompositon}.
\end{proof}

\section{Proof of Theorem \ref{pro second}}\label{s3}
\hskip\parindent
In this section, we give the proof of Theorem \ref{pro second}. To this end,
we first introduce some technical lemmas.

The following lemma is a well known result (see, for example, \cite[Theorem 2,\,p.\,87]{ST89}).
\begin{lem}\label{lem bound}
Let $\Phi\in \mathcal{S}(\rn)$ satisfy $\int_\rn \Phi(x)\,dx=0$,
$\Phi_t(x):=\frac{1}{t^n}\Phi(\frac{x}{t})$ for all $x\in\rn$ and $t\in(0,\fz)$,
and $w\in A_2(\rn)$.
The Littlewood-Paley $g$-function $g_\Phi$ and square function $S_\Phi$ are defined, respectively, by setting,
for all $f\in \mathcal{S}'(\rn)$ and $x\in\rn$,
$$g_\Phi(f)(x):=\lf[\int_0^\fz |f\ast\Phi_t(x)|^2\,\frac{dt}{t}\r]^{1/2}$$
and
$$S_\Phi(f)(x):=\lf[\iint_{\bgz(x)}\lf|f\ast\Phi_t(y)\r|^2\,\frac{dy\,dt}{t^{n+1}}\r]^{1/2}.$$
Then $g_\Phi$ and $S_\Phi$ are bounded on $L^2(w,\,\rn)$.
\end{lem}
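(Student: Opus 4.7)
The plan is to realize $g_\Phi$ as a vector-valued Calder\'on-Zygmund operator, apply weighted vector-valued CZ theory, and then reduce $S_\Phi$ to $g$-functions via a cone change of variables.

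First I would establish the unweighted $L^2(\rn)$-boundedness of $g_\Phi$ by Plancherel. Writing
\begin{equation*}
\|g_\Phi(f)\|_{L^2(\rn)}^2=\int_0^\fz\|\Phi_t\ast f\|_{L^2(\rn)}^2\,\frac{dt}{t}=\int_\rn|\wh f(\xi)|^2\,m(\xi)\,d\xi,
\end{equation*}
where $m(\xi):=\int_0^\fz|\wh\Phi(t\xi)|^2\,\frac{dt}{t}$, the substitution $s=t|\xi|$ reveals that $m$ depends only on the direction $\xi/|\xi|$; since $\wh\Phi(0)=0$ (from $\int_\rn\Phi\,dx=0$) and $\wh\Phi\in\cs(\rn)$, the integrand is $O(s)$ near $s=0$ and rapidly decreasing as $s\to\fz$, so $m$ is finite and uniformly bounded in $\xi$.

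Next I would view $g_\Phi$ as a vector-valued singular integral with operator-valued kernel $K(x,y):=\{\Phi_t(x-y)\}_{t>0}$ taking values in the Hilbert space $H:=L^2(\rr_+,\,dt/t)$, so that $g_\Phi(f)(x)=\|K\ast f(x)\|_H$. Using the substitution $s=|x-y|/t$ together with the Schwartz decay of $\Phi$ and $\nabla\Phi$, I would verify the standard size estimate $\|K(x,y)\|_H\ls|x-y|^{-n}$ and the Lipschitz-type smoothness $\|\nabla_xK(x,y)\|_H+\|\nabla_yK(x,y)\|_H\ls|x-y|^{-n-1}$. The weighted vector-valued Calder\'on-Zygmund theorem, i.e., the Muckenhoupt extension of the Benedek-Calder\'on-Panzone theorem, then yields the $L^2(w,\rn)$-boundedness of $g_\Phi$ for any $w\in A_2(\rn)$.

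For $S_\Phi$, I would reduce to an average of $g$-functions. Substituting $y=x+tz$ with $|z|<1$ in the cone integral $\iint_{\bgz(x)}$ and noting that $\Phi_t\ast f(x+tz)=\Psi^{(z)}_t\ast f(x)$, where $\Psi^{(z)}(u):=\Phi(u+z)$, one obtains
\begin{equation*}
S_\Phi(f)(x)^2=\int_{|z|<1}\int_0^\fz\lf|\Psi^{(z)}_t\ast f(x)\r|^2\,\frac{dt}{t}\,dz=\int_{|z|<1}g_{\Psi^{(z)}}(f)(x)^2\,dz.
\end{equation*}
Each $\Psi^{(z)}$ lies in $\cs(\rn)$ with $\int_\rn\Psi^{(z)}\,du=0$, satisfies $|\wh{\Psi^{(z)}}|=|\wh\Phi|$, and has Schwartz seminorms bounded uniformly in $|z|\le 1$. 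Hence the Plancherel computation and kernel estimates above furnish an $L^2(w,\rn)$-bound for $g_{\Psi^{(z)}}$ that is uniform in $z\in B(0,1)$; integrating in $z$ gives the bound for $S_\Phi$.

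The main obstacle is the weighted vector-valued Calder\'on-Zygmund step: Plancherel and the kernel bookkeeping are routine, but extending Benedek-Calder\'on-Panzone to weighted Lebesgue spaces requires invoking Muckenhoupt theory for Hilbert-space-valued singular integrals. Alternatively, one could simply cite classical weighted Littlewood-Paley theory from \cite{ST89}, as the paper itself does, bypassing the vector-valued machinery entirely.
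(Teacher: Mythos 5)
Your outline is correct, but be aware that the paper does not actually prove this lemma: it is stated as ``a well known result'' with a bare citation to \cite[Theorem 2,\,p.\,87]{ST89}, which is precisely the alternative you mention in your closing sentence. Your self-contained route is the standard textbook proof and all of its steps check out. The Plancherel computation is fine because $\int_0^\fz|\wh\Phi(t\xi)|^2\,\frac{dt}{t}=\int_0^\fz|\wh\Phi(s\xi/|\xi|)|^2\,\frac{ds}{s}$ is uniformly bounded thanks to the cancellation $\wh\Phi(0)=0$ and the Schwartz decay; the Hilbert-valued kernel $\{\Phi_t(x-y)\}_{t>0}$ in $L^2(\rr_+,\frac{dt}{t})$ does satisfy $\|K(x,y)\|_H\ls|x-y|^{-n}$ and the gradient bound $\ls|x-y|^{-n-1}$ by the substitution $s=|x-y|/t$; and the weighted Hilbert-valued Calder\'on--Zygmund theorem (the $A_p$ extension of Benedek--Calder\'on--Panzone, as in Garc\'{\i}a-Cuerva and Rubio de Francia, Chapter V) then gives the $L^2(w,\rn)$ bound for $w\in A_2(\rn)$. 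The reduction $S_\Phi(f)(x)^2=\int_{|z|<1}g_{\Psi^{(z)}}(f)(x)^2\,dz$ with $\Psi^{(z)}:=\Phi(\cdot+z)$ is also correct, and the required uniformity in $z$ holds because both the $L^2$ operator norm (via $|\wh{\Psi^{(z)}}|=|\wh\Phi|$) and the kernel constants depend only on finitely many Schwartz seminorms of $\Psi^{(z)}$, which are bounded for $|z|\le1$. The only point worth making explicit is that the lemma is phrased for $f\in\cs'(\rn)$ while the boundedness claim concerns $f\in L^2(w,\rn)$; a standard density argument (using $L^2(\rn)\cap L^2(w,\rn)$, say) settles how $g_\Phi$ and $S_\Phi$ are defined on all of $L^2(w,\rn)$. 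In short: the paper buys the lemma by citation, while your argument supplies the proof behind that citation at the cost of invoking the weighted vector-valued singular integral machinery.
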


\begin{rem}\label{rem dual}
Let $w\in A_2(\rn)$. Then it is easy to show that, via the pairing
$\la f,\,g\ra:=\int_\rn f(x)g(x)\,dx$,
where $f\in L^2(w,\,\rn)$ and $g\in L^2(w^{-1},\,\rn)$, $L^2(w^{-1},\,\rn)$
and the dual space of $L^2(w,\,\rn)$ coincide with equivalent norms.
\end{rem}

The following lemma plays a key role in the proof of Theorem \ref{pro second}.
\begin{lem}\label{lem element2}
Let $\Phi\in \mathcal{S}(\rn)$ satisfy $\int_\rn \Phi(x)\,dx=0$,
$\Phi_t(x)=\frac{1}{t^n}\Phi(\frac{x}{t})$ for all $x\in\rn$ and $t\in(0,\fz)$,
and $w\in A_2(\rn)$.
For any $a\in T^2_w(\rnn)$ and $x\in\rn$, let
\begin{equation}\label{eq pi}
\pi_\Phi(a)(x):=\int_0^\fz \lf(a(\cdot,t)\ast\Phi_t\r)(x)\,\frac{dt}{t}.
\end{equation}
Then $\pi_\Phi$ is bounded from $T^2_w(\rnn)$ to $L^2(w,\,\rn)$.
\end{lem}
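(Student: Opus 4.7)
The plan is to argue by duality, combined with the standard tent-space pairing via a Fubini/cone-integration identity, and then invoke Lemma \ref{lem bound} applied to the ``dual'' weight $w^{-1}$.

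By Remark \ref{rem dual}, it suffices to show that, for every $g\in L^2(w^{-1},\,\rn)$,
\begin{eqnarray*}
\lf|\dint_\rn \pi_\Phi(a)(x)\,g(x)\,dx\r| \ls \|a\|_{T^2_w(\rnn)}\,\|g\|_{L^2(w^{-1},\,\rn)}.
\end{eqnarray*}
Writing out $\pi_\Phi(a)$ and using Fubini (justified, for instance, by first working with a dense subclass of $a$ for which all integrals converge absolutely, e.g. $a$ with compact support in $\rnn$ bounded away from $\{t=0\}$ and $\{t=\fz\}$), the pairing becomes
\begin{eqnarray*}
\iint_{\rnn} a(y,t)\,(g\ast\wz\Phi_t)(y)\,\frac{dy\,dt}{t},
\end{eqnarray*}
where $\wz\Phi(x):=\Phi(-x)$.

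Next, I would invoke the elementary identity $\int_\rn \chi_{\{|x-y|<t\}}\,dx = c_n t^n$ to rewrite the above integral, via Fubini, as
\begin{eqnarray*}
\frac{1}{c_n}\dint_\rn\lf[\iint_{\bgz(x)} a(y,t)\,(g\ast\wz\Phi_t)(y)\,\frac{dy\,dt}{t^{n+1}}\r]dx,
\end{eqnarray*}
so that the Cauchy--Schwarz inequality applied inside the cone $\bgz(x)$ yields the pointwise bound
\begin{eqnarray*}
\ls \dint_\rn A(a)(x)\, S_{\wz\Phi}(g)(x)\,dx.
\end{eqnarray*}
Now split $1=w^{1/2}(x)\,w^{-1/2}(x)$ and apply the Cauchy--Schwarz inequality on $\rn$ to obtain
\begin{eqnarray*}
\ls \|A(a)\|_{L^2(w,\,\rn)}\,\|S_{\wz\Phi}(g)\|_{L^2(w^{-1},\,\rn)} = \|a\|_{T^2_w(\rnn)}\,\|S_{\wz\Phi}(g)\|_{L^2(w^{-1},\,\rn)}.
\end{eqnarray*}

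The final step is to bound the second factor. Since $w\in A_2(\rn)$, the dual weight $w^{-1}$ also belongs to $A_2(\rn)$ (this is a standard property of $A_2$ weights). Moreover, $\wz\Phi\in\cs(\rn)$ with $\int_\rn \wz\Phi(x)\,dx=0$, so Lemma \ref{lem bound} applied with weight $w^{-1}$ gives
\begin{eqnarray*}
\|S_{\wz\Phi}(g)\|_{L^2(w^{-1},\,\rn)} \ls \|g\|_{L^2(w^{-1},\,\rn)}.
\end{eqnarray*}
Combining the above estimates yields the desired bound, and a standard density argument removes the auxiliary assumption on $a$.

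The main obstacle is the Fubini/absolute-convergence justification, which is why I would first prove the inequality for $a$ in a convenient dense subclass of $T^2_w(\rnn)$ and then extend by density using the resulting $L^2(w,\rn)$ bound. All other steps are routine, provided one knows that $w\in A_2(\rn)$ implies $w^{-1}\in A_2(\rn)$, which allows Lemma \ref{lem bound} to be applied to $S_{\wz\Phi}$ in $L^2(w^{-1},\,\rn)$.
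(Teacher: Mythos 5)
Your proposal is correct and follows essentially the same route as the paper: duality via Remark \ref{rem dual}, Fubini plus the cone-integration identity to rewrite the pairing, Cauchy--Schwarz inside the cone and then on $\mathbb{R}^n$ (splitting $w^{1/2}w^{-1/2}$), and finally Lemma \ref{lem bound} applied to the weight $w^{-1}\in A_2(\mathbb{R}^n)$. The only cosmetic difference is that you correctly identify the dual cone-quantity as $S_{\wz\Phi}(g)$, whereas the paper writes $g_{\wz\Phi}(f)$ at that point, but Lemma \ref{lem bound} covers both operators so this is immaterial.
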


\begin{proof}
Fix any $a\in T^2_w(\rnn)$ and let $\wz\Phi(x):=\Phi(-x)$ for all $x\in\rn$.
Then, for any $f\in L^2(w^{-1},\,\rn)$ with $\|f\|_{L^2(w^{-1},\,\rn)}=1$,
by the Fubini theorem, we see that
\begin{eqnarray}\label{eq 2.3}
\la\pi_\Phi(a),\,f\ra
&&=\int_\rn\pi_\Phi(a)(x)f(x)\,dx
=\int_0^\fz \int_\rn (a(\cdot,t)\ast\Phi_t)(x) f(x)\,dx\,\frac{dt}{t}\\
&&=\int_0^\fz\int_\rn a(y,t)\lf(\wz\Phi_t\ast f\r)(y)\,dy\,\frac{dt}{t}\noz\\
&&\sim \int_0^\fz\int_\rn\int_{B(y,t)}a(y,t)\lf(\wz\Phi_t\ast f\r)(y)\,\frac{dx}{t^n}\,dy\,\frac{dt}{t}\noz\\
&&\sim \int_\rn\iint_{\bgz(x)}a(y,t)\lf(\wz\Phi_t\ast f\r)(y)\,\frac{dy\,dt}{t^{n+1}}\,dx.\noz
\end{eqnarray}
Since $w\in A_2(\rn)$ is equivalent to $w^{-1}\in A_2(\rn)$, by \eqref{eq 2.3}, the
H\"{o}lder inequality and Lemma \ref{lem bound}, we find that,
for all $f\in L^2(w^{-1},\,\rn)$ with $\|f\|_{L^2(w^{-1},\,\rn)}=1$,
\begin{eqnarray*}
|\la \pi_\Phi,\,g\ra|
&&\ls \int_\rn\lf[\iint_{\bgz(x)}|a(y,t)|^2\,\frac{dy\,dt}{t^{n+1}}\r]^{\frac{1}{2}}
\lf[\iint_{\bgz(x)}\lf|\lf(\wz\Phi_t\ast f\r)(y)\r|^2\,\frac{dy\,dt}{t^{n+1}}\r]^{\frac{1}{2}}\,dx\\
&&\ls \|A(a)\|_{L^2(w,\,\rn)}\|g_{\wz\Phi}(f)\|_{L^2(w^{-1},\,\rn)}
\ls \|a\|_{T^2_w(\rnn)}.
\end{eqnarray*}
From Remark \ref{rem dual}, we further deduce that
$\|\pi(a)\|_{L^2(w,\,\rn)}\ls \|a\|_{T^2_w(\rnn)}$, which completes the proof
of Lemma \ref{lem element2}.
\end{proof}

By an argument similar to that used in the proof of \cite[Lemma 6]{KS08}, we see that
the following lemma holds true, the details being omitted.
\begin{lem}\label{lem element3}
Let $\vz\in\mathcal{S}(\rn)$. Then the condition $\int_\rn \vz(x)\,dx=0$
is equivalent to that there exist elements $\psi_k\in\mathcal{S}(\rn)$,
$k\in \{1,\,\ldots,\,n\}$, such that
$$\vz=\sum_{k=1}^n \partial_k \psi_k.$$
\end{lem}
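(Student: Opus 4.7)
The plan is to prove the two implications of the equivalence separately, with the ``if'' direction being immediate and the ``only if'' direction handled via the Fourier transform.

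For the easy direction, if $\vz = \sum_{k=1}^n \pa_k \psi_k$ with each $\psi_k\in\mathcal{S}(\rn)$, then the rapid decay of $\psi_k$ together with Fubini's theorem and the fundamental theorem of calculus gives $\int_\rn \pa_k \psi_k(x)\,dx = 0$ for each $k$, and summing in $k$ yields $\int_\rn \vz(x)\,dx = 0$.

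For the converse, I would pass to the Fourier transform, which reduces the problem to the following: given $f := \widehat\vz \in \mathcal{S}(\rn)$ with $f(0) = (2\pi)^{-n/2}\int_\rn \vz(x)\,dx = 0$, construct Schwartz functions $g_1,\ldots,g_n$ with $f(\xi) = \sum_{k=1}^n \xi_k g_k(\xi)$; then $\psi_k := \mathcal{F}^{-1}(g_k/i) \in \mathcal{S}(\rn)$ will satisfy $\vz = \sum_k \pa_k \psi_k$. To build the $g_k$, I would fix cutoffs $\chi,\tilde\chi \in C_c^\fz(\rn)$ with $\chi\equiv 1$ in a neighborhood of $0$ and $\tilde\chi \equiv 1$ on $\supp \chi$, and decompose $f = \chi f + (1-\chi)f$. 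For the local piece, since $\chi f$ is compactly supported and vanishes at the origin, Hadamard's lemma (applying the fundamental theorem of calculus to the map $t\mapsto (\chi f)(t\xi)$) gives $\chi(\xi) f(\xi) = \sum_k \xi_k \eta_k(\xi)$, where $\eta_k(\xi) := \int_0^1 \pa_k(\chi f)(t\xi)\,dt$ is smooth on $\rn$; multiplying through by the cutoff $\tilde\chi$ produces compactly supported, hence Schwartz, functions $\tilde\eta_k := \tilde\chi\,\eta_k$ that still satisfy $\chi(\xi) f(\xi) = \sum_k \xi_k \tilde\eta_k(\xi)$ because $\tilde\chi\equiv 1$ on $\supp\chi$. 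For the global piece, $(1-\chi)f$ vanishes in a neighborhood of $0$, so $\tz_k(\xi) := \xi_k(1-\chi(\xi))f(\xi)/|\xi|^2$ is smooth on all of $\rn$ (extended by $0$ where $\chi\equiv 1$), belongs to $\mathcal{S}(\rn)$ since multiplication by the bounded smooth symbol $\xi_k(1-\chi(\xi))/|\xi|^2$ with uniformly bounded derivatives preserves the Schwartz class, and satisfies $(1-\chi(\xi))f(\xi) = \sum_k \xi_k \tz_k(\xi)$. Setting $g_k := \tilde\eta_k + \tz_k$ then completes the construction.

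The main obstacle is balancing smoothness at the origin against rapid decay at infinity. A naive one-piece Hadamard construction applied directly to $f$ (rather than to the localized $\chi f$) yields functions with only $O(1/|\xi|)$ decay, while directly writing $g_k(\xi) = \xi_k f(\xi)/|\xi|^2$ destroys smoothness at $\xi=0$ unless $f$ vanishes to higher order there; the cutoff decomposition above is the device that resolves both difficulties simultaneously. Once the $g_k$ are in hand, the inverse Fourier transform preserves the Schwartz class and delivers the required representation $\vz = \sum_{k=1}^n \pa_k \psi_k$.
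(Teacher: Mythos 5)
Your proof is correct, and it is essentially the argument the paper defers to: the paper omits the proof and cites Koskela--Saksman \cite[Lemma 6]{KS08}, whose argument is exactly this Fourier-transform construction — the easy direction by integrating the derivatives, and the converse by splitting $\widehat\vz$ with a cutoff near the origin, applying Hadamard's lemma to the local piece and dividing by $|\xi|^2$ on the piece supported away from $0$. Both halves of your construction are justified (the extra cutoff $\tilde\chi$ correctly repairs the lack of compact support of the Hadamard factors), so there is nothing to add.
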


To prove Theorem \ref{pro second}, we also need the following local
weighted Sobolev imbedding theorem (see \cite[Theorem (1.2)]{FKS82}).
\begin{lem}[\cite{FKS82}]\label{thm imbedding}
For any given $p\in (1,\fz)$ and $w\in A_p(\rn)$, there exist positive constants
$C$ and $\delta$ such that, for all balls $B\equiv B(x_B,r_B)$ of $\rn$
with $x_B\in\rn$ and $r_B\in (0,\fz)$, $u\in C^\fz_c(B)$, and
numbers $k_0\in (0,\fz)$ satisfying $1\le k_0\le \frac{n}{n-1}+\delta$,
\begin{equation*}
\lf[\frac{1}{w(B)}\int_B|u(x)|^{k_0p}w(x)\,dx\r]^{\frac1{k_0p}}\le Cr_B
\lf[\frac{1}{w(B)}\int_B|\nab u(x)|^{p}w(x)\,dx\r]^{\frac1p}.
\end{equation*}
\end{lem}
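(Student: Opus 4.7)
The plan is to prove this weighted Sobolev--Poincar\'e inequality in the spirit of Fabes, Kenig and Serapioni, by combining a pointwise representation of $u$ via the Riesz potential of $|\nab u|$ with a weighted norm inequality for that Riesz potential restricted to the ball $B$. The baseline Sobolev exponent $n/(n-1)$ is the scale rigidly built into the kernel $|x-y|^{-(n-1)}$ of $I_1$ and is the natural gain at the $L^1$-Sobolev endpoint; the extra $\delta>0$ will come from exploiting how deeply the weight $w$ sits inside the Muckenhoupt class $A_p(\rn)$.

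First, for $u\in C_c^\fz(B)$ with $B=B(x_B,r_B)$, I would establish the pointwise bound
\begin{equation*}
|u(x)|\le C_0\dint_B \frac{|\nab u(y)|}{|x-y|^{n-1}}\,dy=:C_0\,I_1(|\nab u|\chi_B)(x),\quad x\in B,
\end{equation*}
by integrating $|\nab u|$ along rays emanating from $x$ (see \cite[p.\,125]{St70}, already invoked in \eqref{eq element-ineq}). This reduces the task to the weighted $L^p(w,B)$-to-$L^{k_0 p}(w,B)$ estimate
\begin{equation*}
\lf[\frac{1}{w(B)}\dint_B |I_1(f\chi_B)(x)|^{k_0 p}w(x)\,dx\r]^{\frac{1}{k_0 p}}
\le C\,r_B\lf[\frac{1}{w(B)}\dint_B |f(y)|^p w(y)\,dy\r]^{\frac{1}{p}},
\end{equation*}
which is then applied with $f=|\nab u|$ to yield the stated inequality.

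Next, I would invoke the self-improving properties of the Muckenhoupt class: since $w\in A_p(\rn)$, there exist $s=s(n,p,[w]_{A_p})\in(1,\fz)$ and $\eta=\eta(n,p,[w]_{A_p})\in(0,p-1)$ such that $w\in RH_s(\rn)$ and $w\in A_{p-\eta}(\rn)$, and I would set $\delta$ to be a small positive number determined by $s$, $\eta$, $n$ and $p$. To obtain the displayed Riesz-potential estimate, I would decompose $I_1(f\chi_B)(x)$ along dyadic annuli $A_k:=\{y\in B:\,2^{-k-1}r_B<|x-y|\le 2^{-k}r_B\}$, use H\"older with exponent $s$ (which controls $\int_{A_k}w$ by $(\int_{A_k}w^s)^{1/s}|A_k|^{1/s'}$) on each annulus, and sum the resulting geometric series in $k$, which converges precisely when $k_0\le\frac{n}{n-1}+\delta$. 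Equivalently, one may interpolate by Marcinkiewicz between the strong $(p,p)$ endpoint $\|I_1(f\chi_B)\|_{L^p(w,B)}\le Cr_B\|f\|_{L^p(w)}$ (which follows from the dyadic-annulus bound $|I_1(f\chi_B)(x)|\ls r_B\,M(f)(x)$ for $x\in B$ together with boundedness of $M$ on $L^p(w)$) and a slightly higher-integrability endpoint obtained by running the same chain of estimates with $p$ replaced by $p-\eta$ and passing back to the original weight via the reverse-H\"older inequality.

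The main obstacle is this second step: extracting the precise admissible range $k_0\le\frac{n}{n-1}+\delta$ forces a quantitative balancing of the Lebesgue-scale Sobolev exponent $n/(n-1)$, which is rigidly encoded in the kernel $|x-y|^{-(n-1)}$, against the margin supplied by the reverse-H\"older exponent $s>1$ of $w$ and the $A_{p-\eta}$ improvement. Without any quantitative weight information one only recovers the trivial case $k_0=1$ from the maximal-function bound $|I_1(f\chi_B)(x)|\ls r_B\,M(f)(x)$; the strictly positive $\delta$ is essentially a measurement of how far $w$ sits inside $A_p(\rn)$. Once this weighted Riesz-potential estimate is available, combining it with the pointwise bound of the second paragraph immediately yields the inequality in the statement, with constants $C$ and $\delta$ depending only on $n$, $p$ and $[w]_{A_p}$.
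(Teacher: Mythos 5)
This lemma is not proved in the paper at all: it is imported verbatim as Theorem~(1.2) of Fabes--Kenig--Serapioni \cite{FKS82}, so there is no in-paper argument to compare against. Your overall strategy --- the pointwise bound $|u(x)|\le C\int_B|\nabla u(y)||x-y|^{-(n-1)}\,dy$ for $u\in C_c^\fz(B)$, followed by a local weighted $L^p(w,B)\to L^{k_0p}(w,B)$ estimate for the truncated Riesz potential, with the extra $\delta$ extracted from the self-improvement (openness and reverse H\"older) of $A_p(\rn)$ --- is exactly the route taken in \cite{FKS82}, and the first step is correct and standard.

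The gap is that the second step, which is the entire content of the lemma once the representation formula is in hand, is only described, and neither mechanism you sketch for it works as stated. A dyadic-annulus decomposition of $I_1(f\chi_B)(x)$ combined with H\"older and the $A_p$ control of $\int_{B(x,t)}w^{1-p'}$ produces a \emph{pointwise} bound of the form $C\,r_B\,w(B)^{-1/p}\|f\|_{L^p(w,B)}$ (or $C\,r_B\,Mf(x)$ for the near part); there is no geometric series whose convergence ``encodes'' $k_0\le\frac{n}{n-1}+\delta$, and raising such pointwise bounds to the power $k_0p$ and integrating gives at best $k_0=1$. The genuine gain comes from the Hedberg optimization between the near part $\epsilon\,Mf(x)$ and the far part $\epsilon^{1-n}r_B^{\,n}w(B)^{-1/p}\|f\|_{L^p(w,B)}$, which yields $|I_1(f\chi_B)(x)|\ls (Mf(x))^{1-\frac1n}\bigl(r_B^{\,n}w(B)^{-1/p}\|f\|_{L^p(w,B)}\bigr)^{1/n}$ and hence, via the $L^p(w)$-boundedness of $M$, exactly the critical exponent $k_0=\frac{n}{n-1}$ and nothing more. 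Your Marcinkiewicz alternative does not repair this: to interpolate up to $L^{k_0p}(w,B)$ with $k_0>1$ you need a second endpoint that already carries an integrability gain, and ``running the same chain with $p$ replaced by $p-\eta$ and passing back via reverse H\"older'' does not identify what that endpoint is. The strictly positive $\delta$ requires threading the $A_{p-\eta}$ improvement through the Hedberg splitting itself (so that the maximal function appearing is bounded on $L^{p}(w)$ even after the exponent $k_0p(1-\frac1n)$ exceeds $p$ slightly); until that computation is displayed, the claimed admissible range $1\le k_0\le\frac{n}{n-1}+\delta$ is asserted rather than proved. Since the paper simply cites \cite{FKS82}, the cleanest fix is either to do the same or to reproduce the Hedberg argument in full with the $A_{p-\eta}$ bookkeeping made explicit.
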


We are now in a position to prove Theorem \ref{pro second}.
\begin{proof}[Proof of Theorem \ref{pro second}]
Take $\vz\in C^\fz_c(B(0,1))$ satisfying $\int_0^\fz t|\xi|^2|\hat{\vz}(t\xi)|^2\,dt=1$
for all $\xi\in\rn\setminus\{0\}$ (for the existence of such functions, see \cite[Lemma 1.1]{FJW91}).
In what follows, for a function $\vz:\ \rn\to\rr$, $t\in(0,\fz)$ and $x\in\rn$,
let $\vz_t(x):=\frac{1}{t^n}\vz(\frac{x}{t})$.

Let $f\in\mathcal{H}_0^1(w,\,\rn)\cap H_w^{1,p}(\rn)$,
$\nab f=(\partial_1f,\,\ldots,\,\partial_nf)=:(g_1,\,\ldots,\,g_n)=:\vec{g}$
and, for all $(x,t)\in\rnn$, define
$$F(x,t):=t\div\lf(\vec g\ast \vz_t(x)\r)=\sum_{j=1}^n g_j\ast(\partial_j\vz)_t(x).$$
By \cite[Theorem 2,\,p.\,87]{ST89}, we know that, for all $j\in\{1,\,\ldots,\,n\}$,
$$\|S_{\partial_j \vz}(g_j)\|_{L^p(w,\,\rn)}\ls \|g_j\|_{H^p_w(\rn)},$$
where $p\in (0,1]$.
This further implies that, for every $j\in\{1,\,\ldots,\,n\}$,
$g_j\ast(\partial_j\vz)_t\in T^p_w(\rnn)$. Thus, $F\in T^p_w(\rnn)$
and $\|F\|_{T^p_w(\rnn)}\ls \|\nab f\|_{H_w^p(\rn)}$.

On the other hand, noticing that, for every $j\in\{1,\,\ldots,\,n\}$,
the square function $S_{\partial_j\vz}$ is bounded on $L^2(w,\,\rn)$ (Lemma \ref{lem bound})
and $g_j\in L^2(w,\,\rn)$, we have $S_{\partial_j\vz}(g_j)\in L^2(w,\,\rn)$,
which further implies $F\in T^2_w(\rnn)$.

Thus, $F\in T^p_w(\rnn)\cap T^2_w(\rnn)$. From Lemma \ref{lem tent-decompositon},
it follows that there exist a sequence of numbers, $\{\lz_k\}_{k\in\nn}\st\cc$, and
a sequence of $\asize$-atoms, $\{\az_k\}_{k\in\nn}$, such that
\begin{equation*}
F=\sum_{k=1}^\fz \lz_k \az_k\ \ \ \text{in}\ \ T^p_w(\rnn)\cap T^2_w(\rnn)
\end{equation*}
and
\begin{eqnarray*}
\lf\{\sum_{k=1}^\fz |\lz_k|^p\r\}^{\frac{1}{p}}\sim \|F\|_{T^p_w(\rnn)}\ls \|\nab f\|_{H_w^p(\rn)}.
\end{eqnarray*}
From Lemmas \ref{lem element2} and \ref{lem element1},
we deduce that, for every $j\in\{1,\,\ldots,\,n\}$,
\begin{equation}\label{eq 2.z1}
\pi_{\partial_j\vz}(F)=\sum_{k=1}^\fz\lz_k\pi_{\partial_j\vz}(\az_k)\ \ \ \text{in}\ \ L^2(w,\,\rn)
\end{equation}
and
\begin{eqnarray}\label{eq 2.y4}
\|\pi_{\partial_j\vz}(\az_k)\|_{L^2(w,\,\rn)}\le C \|\az_k\|_{T^2_w(\rnn)}\le C [w(B_k)]^{\frac12-\frac1p}.
\end{eqnarray}
where $\pi_{\partial_j\vz}$ is as in \eqref{eq pi} with $\Phi$ replaced by $\partial_j\vz$
and the positive constant $C$ is independent of $k$.

Since, for every $k\in\nn$, $\az_k$ is a $\asize$-atom, we know that
there exists some ball $B_k:=B(x_k,r_k)$ with $x_k\in\rn$ and $r_k\in (0,\fz)$ such that
$\supp \az_k \st \wh{B_k}$ and, for every $q\in (1,\fz)$,
\begin{equation}\label{eq 2.4}
\lf\{\int_\rn\lf[\iint_{\bgz(x)}|\az_k(y,t)|^2\,\frac{dy\,dt}{t^{n+1}}\r]^{\frac{q}{2}}\,dx\r\}^{\frac{1}{q}}
\le |B_k|^{\frac{1}{q}}[w(B_k)]^{-\frac{1}{p}}.
\end{equation}
For every $k\in\nn$ and $x\in\rn$, let $\bz_k(x):=-\int_0^\fz(\az_k(\cdot,t)\ast\vz_t)(x)\,dt$
and $\wz B_k:=\wz{c} B_k$, where the positive constant $\wz c\in (1,\fz)$, independent of $k$,
will be determined later.
Next, we prove that, for every $k\in\nn$,
\begin{equation*}
\bz_k\in \mathcal{H}_0^1(w,\,\wz B_k)
\end{equation*}
and
\begin{equation}\label{eq 2.z2}
\vec {b_k}:=\lf(\pi_{\pt_1\vz}(\az_k),\,\ldots,\,\pi_{\pt_n\vz}(\az_k)\r)=\nab\bz_k.
\end{equation}

Since $\supp \az_k\st \wh{B_k}$, it is easy to see $\supp \bz_k\st B_k$.
By the fact that $\az_k$ is a $\asize$-atom, the Minkowski integral inequality,
the Young inequality
and the H\"{o}lder inequality, we further know that
\begin{eqnarray}\label{eq 2.6}
\|\bz_k\|_{L^2(\rn)}
&&=\lf[\int_\rn\lf|\int_0^\fz(\az_k(\cdot,t)\ast\vz_t)(x)\,dt\r|^2\,dx\r]^{\frac{1}{2}}\\
&&\le\int_0^{r_k}\lf[\int_\rn|(\az_k(\cdot,t)\ast\vz_t)(x)|^2\,dx\r]^{\frac{1}{2}}\,dt\noz\\
&&\le\int_0^{r_k}\lf[\int_\rn|\az_k(x,t)|^2\,dx\r]^{\frac{1}{2}}
\lf[\int_\rn|\vz_t(x)|\,dx\r]\,dt\noz\\
&&\ls r_k\lf[\int_0^{r_k}\int_\rn|\az_k(x,t)|^2\,\frac{dx\,dt}{t}\r]^{\frac{1}{2}}<\fz,\noz
\end{eqnarray}
where the last inequality follows from \eqref{eq 2.4} with $q=2$.
Thus, $\bz_k\in L^2(\rn)$.

For every $k\in\nn$, $\dz\in(0,r_k)$ and $x\in\rn$, let
$F_{k,\dz}(x):=\int_\dz^\fz(\az_k(\cdot,t)\ast\vz_t)(x)\,dt$.
Then $\supp F_{k,\dz}\st B_k$.
From an argument similar to that used in the estimate \eqref{eq 2.6}, it follows that
$F_{k,\dz}\in L^2(\rn)$ and
\begin{eqnarray}\label{eq 2.x4}
\lim_{\dz\to 0}\|F_{k,\dz}-\bz_k\|_{L^2(\rn)}=0.
\end{eqnarray}
Next, we prove that,
for any $k\in\nn$, $\dz\in(0,r_k)$ and almost every $x\in\rn$,
the partial derivatives of $F_{k,\dz}$ exist.

For any $i\in\{1,\,\ldots,\,n\}$, let $\vec{e_i}:=(0,\,\ldots,\,0,\,1,\,0,\,\ldots,\,0)\in\rn$
be the $i^{th}$ standard coordinate vector and $h\in(0,\fz)$. Then, we see that,
for every $x\in\rn$, there exists some $\tz\in (0,1)$ such that
\begin{eqnarray}\label{eq 2.51}
&&\frac{F_{k,\dz}(x+h\vec{e_i})-F_{k,\dz}(x)}{h}\\
&&\hs=\frac{1}{h}\int_\dz^\fz t\int_\rn\az_k(y,t)
\frac{1}{t^n}\lf[\vz\lf(\frac{x+h\vec{e_i}-y}{t}\r)-\vz\lf(\frac{x-y}{t}\r)\r]\,dy\,\frac{dt}{t}\noz\\
&&\hs=\int_\dz^\fz\int_\rn\az_k(y,t)\frac{1}{t^n}(\pt_i\vz)
\lf(\frac{x+\tz h\vec{e_i}-y}{t}\r)\,dy\,\frac{dt}{t}.\noz
\end{eqnarray}
Since $\vz\in C_c^\fz(B(0,1))$, it follows that, when $0<h<\dz$, there
exists a positive constant $C_{(\vz)}$, depending on $\vz$, such that
\begin{eqnarray}\label{eq 2.52}
&&\lf|\int_\rn\az_k(y,t)\frac{1}{t^n}(\pt_i\vz)\lf(\frac{x+\tz h\vec{e_i}-y}{t}\r)\,dy\r|\\
&&\hs\le C_{(\vz)}\int_\rn|\az_k(y,t)|\frac{1}{t^n}\chi_{B(0,2)}\lf(\frac{x-y}{t}\r)\,dy\noz\\
&&\hs=C_{(\vz)}\lf(|\az_k(\cdot,t)|\ast\lf(\chi_{B(0,2)}\r)_t\r)(x)=:G(x,t).\noz
\end{eqnarray}
By the Minkowski integral inequality, the Young inequality,
the H\"{o}lder inequality, \eqref{eq 2.4} with $q=2$
and the fact that $\az_k$ is a $\asize$-atom, we conclude that
\begin{eqnarray*}
\lf\{\int_\rn\lf[\int_\dz^\fz|G(x,t)|\,\frac{dt}{t}\r]^2\,dx\r\}^{\frac{1}{2}}
&&\le\int_\dz^\fz\lf[\int_\rn|G(x,t)|^2\,dx\r]^{\frac12}\,\frac{dt}{t}\\
&&\le C_{(\vz)}\int_\dz^{r_k}\|\az_k(\cdot,t)\|_{L^2(\rn)}
\lf\|\lf(\chi_{B(0,2)}\r)_t\r\|_{L^1(\rn)}\,\frac{dt}{t}\\
&&\le C_{(\vz,r_k,\dz)}\lf[\int_0^{r_k}\int_\rn|\az_k(x,t)|^2\,\frac{dx\,dt}{t}\r]^{\frac12}<\fz,
\end{eqnarray*}
which implies that, for almost every $x\in\rn$, $\int_\dz^\fz|G(x,t)|\,\frac{dt}{t}<\fz$.
By this, \eqref{eq 2.51}, \eqref{eq 2.52} and the dominated convergence theorem,
we find that, for almost every $x\in\rn$,
\begin{eqnarray*}
\pt_i F_{k,\dz}(x)=\int_\dz^\fz\lf(\az_k(\cdot,t)\ast(\pt_i\vz)_t\r)(x)\,\frac{dt}{t}.
\end{eqnarray*}
Moreover, by a simple calculation, we further see that $\pt_i F_{k,\dz}$, $i\in\{1,\,\ldots,\,n\}$,
is just the weak derivative of $F_{k,\,\dz}$.
From an argument similar to that used in the proof of Lemma \ref{lem element2}, we conclude that,
for every $k\in\nn$, $\dz\in (0,r_k)$ and $i\in\{1,\,\ldots,\,n\}$,
$\pt_i F_{k,\dz}\in L^2(w,\,\rn)$ and
\begin{eqnarray}\label{eq 2.x1}
\lim_{\dz\to 0}\|\nab F_{k,\dz}-\vec{b_k}\|_{L^2(w,\,\rn)}=0.
\end{eqnarray}

Take $\phi\in C^\fz_c(B(0,1))$ satisfying $\int_\rn \phi(x)\,dx=1$ and let
$\phi_\vez(x):=\frac{1}{\vez^n}\phi(\frac{x}{\vez})$ for all $x\in\rn$ and $\vez\in(0,\fz)$.
Since, for all $k,\,n\in\nn$, $\supp F_{k,1/n}\st B_k$, $F_{k,1/n}\in L^2(\rn)$
and $\nab F_{k,1/n}\in L^2(w,\,\rn)$,
from \cite[p.\,123]{St70}, \cite[Theorem 2.1]{Du01} and \cite[Theorem 2.1.4]{Tu00},
it follows that there exist a sequence $\{\vez_n\}_{n\in\nn}$ of positive numbers
satisfying $\lim_{n\to\fz}\vez_n=0$ and a positive constant $\wz c\in(1,\fz)$ such that
\begin{eqnarray*}
\wz F_{k,1/n}:=F_{k,1/n}\ast\phi_{\vez_n}\in C^\fz_c(\wz B_k)\ \ \text{with}\ \wz{B}_k=\wz c B_k, \ \
\nab\wz F_{k,1/n}= (\nab F_{k,1/n})\ast\phi_{\vez_n},
\end{eqnarray*}
\begin{eqnarray}\label{eq 2.x2}
\lf\|\wz F_{k,1/n}-F_{k,1/n}\r\|_{L^2(\rn)}< 2^{-n}
\end{eqnarray}
and, for $w\in A_2(\rn)$,
\begin{eqnarray}\label{eq 2.x3}
\lf\|\nab \wz F_{k,1/n}-\nab F_{k,1/n}\r\|_{L^2(w,\,\rn)}<2^{-n}.
\end{eqnarray}
From \eqref{eq 2.x3} and \eqref{eq 2.x1}, we deduce that
\begin{eqnarray}\label{eq 2.y1}
\lim_{n\to\fz}\lf\|\nab \wz F_{k,\vez_n}-\vec{b_k}\r\|_{L^2(w,\,\rn)}= 0.
\end{eqnarray}
By this, the fact that $\wz F_{k,1/n}\in C^\fz_c(\wz B_k)$ and Lemma \ref{thm imbedding},
we know that $\{\wz F_{k,1/n}\}_{n\in\nn}$ is a Cauchy sequence in $L^2(w,\,\wz B_k)$
and
\begin{eqnarray}\label{eq 2.y2}
\lf\|\wz F_{k,1/n}\r\|_{L^2(w,\,\wz B_k)}\le C r_k\lf\|\nab \wz F_{k,1/n}\r\|_{L^2(w,\,\wz B_k)},
\end{eqnarray}
where the positive constant $C$ is independent of $k$ or $n$.
Therefore, there exists a function $g_k\in L^2(w,\,\rn)$ such that
\begin{eqnarray}\label{eq 2.y3}
\lim_{n\to\fz}\lf\|\wz F_{k,\vez_n}-g_k\r\|_{L^2(w,\,\rn)}=0,
\end{eqnarray}
which further implies that there exists a subsequence of $\{\wz F_{k,\vez_n}\}_{n\in\nn}$
(without loss of generality, we use the same notation as the original sequence) such that,
for almost every $x\in\rn$, $\lim_{n\to\fz}\wz F_{k,\vez_n}(x)=g_k(x)$.

On the other hand, by \eqref{eq 2.x2} and \eqref{eq 2.x4}, we see that
$\lim_{n\to\fz}\|\wz F_{k,\vez_n}-\bz_k\|_{L^2(\rn)}=0$, which further implies
that there exists a subsequence of $\{\wz F_{k,\vez_n}\}_{n\in\nn}$
(without loss of generality, we use the same notation as the original sequence again) such that,
for almost every $x\in\rn$, $\lim_{n\to\fz}\wz F_{k,\vez_n}(x)=\bz_k(x)$.

Therefore, for every $k\in\nn$, $\bz_k=g_k\in L^2(w,\,\rn)$.
From this, \eqref{eq 2.y1}, \eqref{eq 2.y2} and \eqref{eq 2.y3}, we deduce that
\begin{eqnarray*}
\lim_{n\to\fz}\|\wz F_{k,\vez_n}-\bz_k\|_{\mathcal{H}^1_0(w,\,\wz B_k)}=0,\ \
\bz_k\in \mathcal{H}_0^1(w,\,\wz B_k),\ \  \nab \bz_k=\vec{b_k}
\end{eqnarray*}
and
\begin{eqnarray*}
\|\bz_k\|_{L^2(w,\,\wz B_k)}\ls r_k\|\nab \bz_k\|_{L^2(w,\,\wz B_k)}.
\end{eqnarray*}
This, together with \eqref{eq 2.y4},
further implies that, for every $k\in\nn$, $\bz_k$ is an $H^{1,p}_w(\rn)$-atom
associated to the ball $\wz B_k$ up to a harmless positive constant
independent of $k$.

By \eqref{eq 2.z1} and \eqref{eq 2.z2}, we see that
\begin{eqnarray}\label{eq 2.w1}
\sum_{k=1}^\fz \lz_k \nab \bz_k=\sum_{k=1}^\fz \lz_k \vec{b_k}=-\int_0^\fz \nab \lf(F\ast\vz_t\r)\,dt\ \ \
\text{in}\ \ L^2(w,\,\rn).
\end{eqnarray}
Next, we prove
\begin{eqnarray}\label{eq 2.w2}
-\int_0^\fz \nab \lf(F\ast\vz_t\r)\,dt=\vec{g}=\nab f\ \ \ \text{in}\ \ L^2(w,\,\rn).
\end{eqnarray}
Since, for any $u\in C_c^\fz(\rn)$ and all $\xi\in\rn$, it holds true that
\begin{eqnarray*}
&&\lf\{-\int_0^\fz\lf[t\div\lf((\nab u)\ast\vz_t\r)\r]\vz_t\,dt\r\}^\wedge(\xi)\\
&&\hs=-\int_0^\fz\lf\{\lf[t\div\lf((\nab u)\ast\vz_t\r)\r]\ast\vz_t\r\}^\wedge(\xi)\,dt\\
&&\hs=-\int_0^\fz\lf\{t\sum_{j=1}^n\pt_j\lf((\pt_j u)\ast\vz_t\r)\r\}^\wedge(\xi)\wh \vz(t\xi)\,dt\\
&&\hs=-i\int_0^\fz t\sum_{j=1}^n\xi_j\lf((\pt_ju)\ast\vz_t\r)^{\wedge}(\xi)\wh\vz(t\xi)\,dt
=\int_0^\fz t\sum_{j=1}^n[\xi_j\wh \vz(t\xi)]^2\wh u(\xi)\,dt\\
&&\hs=\wh u(\xi)\int_0^\fz t[|\xi|\wh \vz(t\xi)]^2\,dt=\wh u(\xi),
\end{eqnarray*}
then, it follows that $-\int_0^\fz\lf[t\div\lf((\nab u)\ast\vz_t\r)\r]\vz_t\,dt=u$
and hence
$$-\int_0^\fz\nab\lf[t\div\lf((\nab u)\ast\vz_t\r)\r]\vz_t\,dt=\nab u,$$
which, together with
$f\in \mathcal{H}_0^1(w,\,\rn)$ and a density argument, implies that \eqref{eq 2.w2} holds true.

Hence, from \eqref{eq 2.w1} and \eqref{eq 2.w2}, it follows that
\begin{eqnarray}\label{eq 2.xx1}
\nab f=\sum_{k=1}^\fz \lz_k \nab \bz_k\ \ \ \text{in}\ \ L^2(w,\,\rn),
\end{eqnarray}
where $\{\bz_k\}_{k\in\nn}$ is a sequence of $H^{1,p}_w(\rn)$-atoms up to a harmless
positive constant.

To complete the proof of Theorem \ref{pro second}, we still need to show \eqref{eq 2.xx}.
From \eqref{eq 2.xx1}, it is easy to see
$\nab f=\sum_{k=1}^\fz \lz_k \nab \bz_k$ in $\mathcal{S}'(\rn)$, which further implies
that, for any $\eta\in \mathcal{S}(\rn)$,
\begin{eqnarray*}
\int_\rn f(x)\nab \eta(x)\,dx=\sum_{k=1}^\fz \lz_k\int_\rn \bz_k(x)\nab \eta(x)\,dx.
\end{eqnarray*}
Then, by Lemma \ref{lem element3}, we obtain \eqref{eq 2.xx}, which
completes the proof of Theorem \ref{pro second}.
\end{proof}

\section{Proof of Theorem \ref{thm main}}\label{s4}

\begin{proof}[Proof of Theorem \ref{thm main}]
By \cite[Theorem 1.6]{ZCJY14}, we know that,
for $p\in (\frac{n}{n+1},1]$, $w\in A_{q_0}(\rn)$ with $q_0\in [1,\frac{p(n+1)}{n})$,
and $f\in H_{L_w}^p(\rn)\cap L^2(w,\,\rn)$,
\begin{eqnarray}\label{eq 4.x0}
\|f\|_{H_{L_w,\,{\rm Riesz}}^p(\rn)}=\|\nab L_w^{-1/2}f\|_{H^p_w(\rn)}\ls \|f\|_{H_{L_w}^p(\rn)},
\end{eqnarray}
which implies that
\begin{eqnarray}\label{eq 4.x1}
\lf(H_{L_w}^p(\rn)\cap L^2(w,\,\rn)\r)\st \lf(H_{L_w,\,{\rm Riesz}}^p(\rn)\cap L^2(w,\,\rn)\r).
\end{eqnarray}

Next, we prove the reverse inclusion. To this end, we only need
to show that, for any $h\in \rize\cap L^2(w,\,\rn)$,
\begin{eqnarray}\label{eq 3.0}
\|h\|_{H_{L_w}^p(\rn)}\ls \|\nab L_w^{-1/2}h\|_{H_w^p(\rn)}.
\end{eqnarray}

Let $f:=L_w^{-1/2}h$. Then, by \cite[p.\,281,\,Theorem 3.35]{Ka95} and \cite[Theorem 1.1]{CR13}, we see that
$f\in \mathcal{H}_0^1(w,\,\rn)$ and $\|\nab f\|_{L^2(w,\,\rn)}\sim \|L_w^{1/2}f\|_{L^2(w,\,\rn)}$.
For any $x\in\rn$, let
\begin{eqnarray*}
S_1 (h)(x):=
\lf[\iint_{\bgz(x)}\lf|t\sqrt{L_w}e^{-t^2L_w}h(y)\r|^2w(y)\,\frac{dy}{w(B(x,t))}\,\frac{dt}{t}\r]^{1/2}.
\end{eqnarray*}
Similar to proofs of \cite[Proposition 4.9 and Corollary 4.17]{HMM11},
we conclude that
\begin{eqnarray*}%\label{eq 3.1}
\|S_1 (h)\|_{L^p(w,\,\rn)}\sim \|h\|_{H_{L_w}^p(\rn)}.
\end{eqnarray*}
Therefore, to prove \eqref{eq 3.0}, it suffices to show
\begin{eqnarray}\label{eq 3.2}
\lf\|S_1\lf(\sqrt{L_w}(f)\r)\r\|_{L^p(w,\,\rn)}\ls \|\nab f\|_{H_w^p(\rn)}.
\end{eqnarray}
Since $f\in \mathcal{H}_0^1(w,\,\rn)$ and $\nab f\in H_w^p(\rn)$,
we see that $f\in \mathcal{H}_0^1(w,\,\rn)\cap H^{1,p}_w(\rn)$. Then, by Theorem \ref{pro second},
there exist a sequence of numbers, $\{\lz_k\}_{k\in\nn}\st\cc$,
and a sequence of $H^{1,p}_w(\rn)$-atoms, $\{\bz_k\}_{k\in\nn}$, such that
\begin{equation}\label{eq 3.4}
\nab f=\sum_{k=1}^\fz \lz_k \nab \bz_k\ \ \ \ \text{in}\ L^2(w,\,\rn)
\end{equation}
and
\begin{equation*}
\lf\{\sum_{k=1}^\fz |\lz_k|^p\r\}^{\frac{1}{p}}\ls \|\nab f\|_{H_w^p(\rn)}.
\end{equation*}

We claim that, to prove \eqref{eq 3.2}, it suffices to show that, for any $H^{1,p}_w(\rn)$-atom $a$,
there exists a positive constant $C$, independent of $a$, such that
\begin{equation}\label{eq 3.3}
\lf\|S_1\lf(\sqrt{L_w}(a)\r)\r\|_{L^p(w,\,\rn)}\le C.
\end{equation}
Indeed, since $L_w$ has a bounded $H_\fz$ calculus in $L^2(w,\,\rn)$
(see \cite{CR08} and \cite{M86}),
by \cite[p.\,487]{BL11} (see also \cite{M86,JY11}), we know that
$S_1$ is bounded on $L^2(w,\,\rn)$.
Therefore, by this and \cite[Theorem 1.1]{CR13}, we have
\begin{eqnarray}\label{eq 3.5}
\lf\|S_1\lf(\sqrt{L_w}(f)\r)\r\|_{L^2(w,\,\rn)}\ls\lf\|\sqrt{L_w}(f)\r\|_{L^2(w,\,\rn)}
\sim \|\nab f\|_{L^2(w,\,\rn)}.
\end{eqnarray}
From \eqref{eq 3.5} and \eqref{eq 3.4}, we deduce that
\begin{eqnarray*}
\lim_{N\to\fz}
\lf\|S_1\lf(\sqrt{L_w}(f)\r)-S_1\lf(\sqrt{L_w}\lf(\sum_{k=1}^N\lz_k\bz_k\r)\r)\r\|_{L^2(w,\,\rn)}=0.
\end{eqnarray*}
Hence, there exists a subsequence of $\{S_1(\sqrt{L_w}(\sum_{k=1}^N\lz_k\bz_k))\}_{N=1}^\fz$
(without loss of generality, we use the same notation as the original sequence) such that,
for almost every $x\in\rn$,
\begin{eqnarray*}
\lim_{N\to\fz}S_1\lf(\sqrt{L_w}\lf(\sum_{k=1}^N\lz_k\bz_k\r)\r)(x)=S_1\lf(\sqrt{L_w}(f)\r)(x).
\end{eqnarray*}
From this and the Minkowski inequality, we deduce that, for almost every $x\in\rn$,
$$S_1\lf(\sqrt{L_w}(f)\r)(x)\le \sum_{k=1}^\fz |\lz_k|S_1\lf(\sqrt{L_w}(\bz_k)\r)(x).$$
By this and \eqref{eq 3.3}, we know that
\begin{eqnarray*}
\lf\|S_1\lf(\sqrt{L_w}(f)\r)\r\|_{L^p(w,\,\rn)}
&&\le \lf[\sum_{k=1}^\fz\int_\rn |\lz_k|^p\lf[S_1\lf(\sqrt{L_w}(\bz_k)\r)(x)\r]^p w(x)\,dx\r]^{1/p}\\
&&\ls \lf[\sum_{k=1}^\fz|\lz_k|^p\r]^{1/p}\ls\|\nab f\|_{H_w^p(\rn)}.
\end{eqnarray*}
Thus, \eqref{eq 3.3} implies \eqref{eq 3.2}.

It remains to prove \eqref{eq 3.3}. Let $a$ be an $H^{1,p}_w(\rn)$-atom associated to
a ball $B:=(x_B,r_B)$ with $x_B\in\rn$ and $r_B\in (0,\fz)$.
Then, by the H\"{o}lder inequality, we find that
\begin{eqnarray}\label{eq 3.xx}
\hs\hs\hs&&\lf\|S_1\lf(\sqrt{L_w}(a)\r)\r\|^p_{L^p(w,\,\rn)}\\
&&\hs=\int_\rn \lf[S_1\lf(\sqrt{L_w}(a)\r)(x)\r]^p w(x)\,dx\noz\\
&&\hs=\sum_{j=0}^\fz \int_{\ujb}
\lf[\iint_{\bgz(x)}\lf|tL_w e^{-t^2L_w}(a)(y)\r|^2w(y)\frac{dy}{w(B(x,t))}\,\frac{dt}{t}\r]^{\frac{p}{2}}
w(x)\,dx\noz\\
&&\hs\le \sum_{j=0}^\fz \lf[w(2^jB)\r]^{1-\frac{p}{2}}\lf[\int_{\ujb}
\iint_{\bgz(x)}\lf|tL_we^{-t^2L_w}(a)(y)\r|^2
\frac{w(y)\,dy}{w(B(x,t))}\,\frac{dt}{t}w(x)\,dx\r]^{\frac{p}{2}}\noz\\
&&\hs\ls\sum_{j=3}^\fz \lf[w(2^jB)\r]^{1-\frac{p}{2}}\lf[\iint_{R(\ujb)}
\lf|tL_we^{-t^2L_w}(a)(y)\r|^2
w(y)\,dy\,\frac{dt}{t}\r]^{\frac{p}{2}}\noz\\
&&\hs\hs+[w(B)]^{1-\frac{p}{2}}\lf\|S_1\lf(\sqrt{L_w}(a)\r)\r\|_{L^2(w,\,\rn)}^p\noz\\
&&\hs\ls \sum_{j=3}^\fz \lf[w(2^jB)\r]^{1-\frac{p}{2}}\lf[\int_0^\fz\int_{(2^{j-2}B)^\com}
\lf|t^2L_we^{-t^2L_w}(a)(y)\r|^2 w(y)\,dy\,\frac{dt}{t^3}\r]^{\frac{p}{2}}\noz\\
&&\hs\hs+ \sum_{j=3}^\fz \lf[w(2^jB)\r]^{1-\frac{p}{2}}\lf[\int_{2^{j-2}r_B}^\fz\int_{2^{j-2}B}
\lf|t^2L_we^{-t^2L_w}(a)(y)\r|^2 w(y)\,dy\,\frac{dt}{t^3}\r]^{\frac{p}{2}}\noz\\
&&\hs\hs +[w(B)]^{1-\frac{p}{2}}\lf\|S_1\lf(\sqrt{L_w}(a)\r)\r\|_{L^2(w,\,\rn)}^p
=:{\rm I}+{\rm II}+{\rm III},\noz
\end{eqnarray}
where $\ujb$ is as in \eqref{eq-def of ujb} and $R(\ujb)$ is as in \eqref{eq tent}
with $F$ replaced by $\ujb$.

From \eqref{eq 3.5} and the fact that $a$ is an $H^{1,p}_w(\rn)$ atom,
it follows that
\begin{eqnarray}\label{eq 3.x1}
{\rm III}\ls [w(B)]^{1-\frac p2}\lf\|\nab a\r\|_{L^2(w,\,\rn)}^p\ls 1.
\end{eqnarray}

For ${\rm II}$, by the assumption that $\{tL_w e^{-tL_w}\}_{t\geq 0}$ satisfies
$L^r-L^2$ weighted full off-diagonal estimates with $r\in (1,2)$,
$w\in A_q(\rn)$ with $q\in [1,\frac{2p}{2-p}(\frac1r-\frac12+\frac1n))$,
and Lemma \ref{lem Ap-2}, we conclude that
\begin{eqnarray}\label{eq 3.6}
{\rm II}
&&= \sum_{j=3}^\fz \lf[w(2^jB)\r]^{1-\frac{p}{2}}\lf[\int_{2^{j-2}r_B}^\fz\int_{2^{j-2}B}
\lf|t^2L_we^{-t^2L_w}(a)(y)\r|^2 w(y)\,dy\,\frac{dt}{t^3}\r]^{\frac{p}{2}}\\
&&\ls \sum_{j=3}^\fz \lf[w(2^jB)\r]^{1-\frac{p}{2}}\lf\{\int_{2^{j-2}r_B}^\fz
t^{-2n\lf(\frac{1}{r}-\frac{1}{2}\r)}
\lf[\int_B |a(y)|^r[w(y)]^{\frac{r}{2}}\,dy\r]^{\frac{2}{r}}\,\frac{dt}{t^3}\r\}
^{\frac{p}{2}}\noz\\
&&\ls \sum_{j=3}^\fz\lf[2^{qnj}w(B)\r]^{1-\frac p2}\lf(2^j r_B\r)^{-p\lf(1+\frac{n}{r}-\frac{n}{2}\r)}
\lf[\int_B |a(y)|^{r}[w(y)]^{\frac{r}{2}}\,dy\r]^{\frac pr}.\noz
\end{eqnarray}
From the H\"{o}lder inequality, the fact that $a$ is an $H^{1,p}_w(\rn)$-atom
and the weighted Sobolev inequality \eqref{eq Sobolev} with $p_+=\frac{2n}{n-2}$, it follows that
\begin{eqnarray}\label{eq 3.6x}
&&\lf[\int_B |a(y)|^{r}[w(y)]^{\frac{r}{2}}\,dy\r]^{\frac p{r}}\\
&&\hs\le \lf[\int_B |a(y)|^{p_+}[w(y)]^{\frac{p_+}{2}}\,dy\r]^{\frac{p}{p_+}}
|B|^{p(\frac{1}{r}-\frac{1}{p_+})}\noz\\
&&\hs\ls \lf[\int_B |\nab a(y)|^2w(y)\,dy\r]^{\frac{p}{2}}|B|^{p\lf(\frac{1}{r}-\frac12+\frac1n\r)}
\ls [w(B)]^{\frac{p}{2}-1}(r_B)^{p\lf(\frac{n}{r}-\frac n2+1\r)}.\noz
\end{eqnarray}
Combining this, \eqref{eq 3.6} and the fact that
$1\le q<\frac{2p}{2-p}(\frac1r-\frac12+\frac1n)$, we conclude that
\begin{equation}\label{eq 3.x2}
{\rm II}\ls \sum_{j=3}^\fz 2^{qnj(1-\frac{p}{2})}2^{-jp\lf(\frac{n}{r}-\frac n2+1\r)}\ls 1.
\end{equation}

For ${\rm I}$, we write
\begin{eqnarray}\label{eq 3.x3}
{\rm I}
&&\le \sum_{j=3}^\fz \lf[w(2^jB)\r]^{1-\frac{p}{2}}\lf[\int_0^{2^jr_B}\int_{(2^{j-2}B)^\com}
\lf|t^2L_we^{-t^2L_w}(a)(y)\r|^2 w(y)\,dy\,\frac{dt}{t^3}\r]^{\frac{p}{2}}\\
&&\hs +\sum_{j=3}^\fz \lf[w(2^jB)\r]^{1-\frac{p}{2}}\lf[\int_{2^jr_B}^\fz\int_{(2^{j-2}B)^\com}
\cdots\r]^{\frac{p}{2}}=:{\rm I}_1+{\rm I}_2.\noz
\end{eqnarray}
By Proposition \ref{pro off-diagonal},
the fact that $w\in A_q(\rn)$ with $1\le q<\frac{2p}{2-p}(\frac1r-\frac12+\frac1n)$,
Lemma \ref{lem Ap-2} and \eqref{eq 3.6x}, we find that
\begin{eqnarray}\label{eq 3.x31}
\hs\hs\hs{\rm I}_1
&&\ls \sum_{j=3}^\fz \lf[w(2^jB)\r]^{1-\frac{p}{2}}\lf[\int_0^{2^jr_B}
t^{-2n(\frac{1}{r}-\frac12)}e^{-\frac{(2^jr_B)^2}{ct^2}}
\|a\|^2_{L^{r}(w^{\frac{r}{2}},\,B)}\,\frac{dt}{t^3}\r]^{\frac{p}{2}}\\
&&\ls \sum_{j=3}^\fz \lf[w(2^jB)\r]^{1-\frac{p}{2}}\|a\|^p_{L^{r}(w^{\frac{r}{2}},\,B)}\noz\\
&&\hs\hs\times\lf[\int_0^{2^jr_B}
\lf(\frac{2^jr_B}{t}\r)^{2n\lf(\frac1r-\frac12+\frac3{2n}\r)}
e^{-\frac{(2^jr_B)^2}{ct^2}}\lf(\frac{1}{2^jr_B}\r)^{2n\lf(\frac1r-\frac12+\frac3{2n}\r)}
\,dt\r]^{\frac{p}{2}}\noz\\
&&\ls \sum_{j=3}^\fz \lf[2^{qnj}w(B)\r]^{1-\frac{p}{2}}\lf(2^j r_B\r)^{-p\lf(\frac{n}{r}-\frac{n}{2}+1\r)}
[w(B)]^{\frac{p}{2}-1}(r_B)^{p\lf(\frac nr-\frac n2+1\r)}\noz\\
&&\ls\sum_{j=3}^\fz 2^{qnj(1-\frac{p}{2})}2^{-jp\lf(\frac{n}{r}-\frac n2+1\r)}\ls 1.\noz
\end{eqnarray}
From an argument similar to that used in the above, it also follows that
\begin{eqnarray}\label{eq 3.x32}
{\rm I}_2\ls1.
\end{eqnarray}

Combining \eqref{eq 3.x32}, \eqref{eq 3.x31}, \eqref{eq 3.x3}, \eqref{eq 3.x2},
\eqref{eq 3.x1} and \eqref{eq 3.xx}, we obtain \eqref{eq 3.3}.
This further implies \eqref{eq 3.0}.
Therefore, we have
\begin{eqnarray}\label{eq 4.x2}
\lf(H_{L_w,\,{\rm Riesz}}^p(\rn)\cap L^2(w,\,\rn)\r)\st\lf(H_{L_w}^p(\rn)\cap L^2(w,\,\rn)\r).
\end{eqnarray}
From \eqref{eq 4.x2} and \eqref{eq 4.x1}, we deduce that
\begin{eqnarray*}
\lf(H_{L_w,\,{\rm Riesz}}^p(\rn)\cap L^2(w,\,\rn)\r)=\lf(H_{L_w}^p(\rn)\cap L^2(w,\,\rn)\r).
\end{eqnarray*}
This, together with \eqref{eq 4.x0} and \eqref{eq 3.0}, implies that
$H_{L_w,\,{\rm Riesz}}^p(\rn)\cap L^2(w,\,\rn)$ and $H_{L_w}^p(\rn)\cap L^2(w,\,\rn)$
coincide with equivalent quasi-norms.
Then, by a density argument, we complete the proof of
Theorem \ref{thm main}.
\end{proof}

\subsection*{Acknowledgment}
\hskip\parindent  The authors would like to thank Dr. Jun Cao,
Dr. Sibei Yang and Professor Renjin Jiang for some helpful conversations on this topic.

%%%%%%%%%%%%%%%%%%%%%%%%%%%%%%%%%%%%%%%%%%%%%%%%%%%%%%%%%%%%%%%%%%%%%%%%%%%%%%%%%%%%%%%%%%%%%%%%%%%%

\bigskip

\noindent{\small\sc Dachun  Yang  and Junqiang Zhang (Corresponding author)}

\medskip

\noindent{\small School of Mathematical Sciences, Beijing Normal University,
Laboratory of Mathematics and Complex Systems, Ministry of Education,
Beijing 100875, People's Republic of China}

\smallskip

\noindent{\it E-mails}: \texttt{dcyang@bnu.edu.cn} (D. Yang)

\hspace{1.12cm}\texttt{zhangjunqiang@mail.bnu.edu.cn} (J. Zhang)

\end{document}